\numberwithin{equation}{section}
\newcommand{\force}{{\hspace{0.02 cm}\Vdash}}
\newtheorem{prop}{Proposition}[section]
\newtheorem{dfn}[prop]{Definition}
\newtheorem{lemma}[prop]{Lemma}
\newtheorem{mlemma}[prop]{Main Lemma}
\newtheorem{fact}[prop]{Fact}
\newtheorem{mcor}[prop]{Main Corollary}
\newtheorem{thm}[prop]{Theorem}
\newtheorem{mthm}[prop]{Main Theorem}
\newtheorem{clm}[prop]{Claim}
\newtheorem{mclm}[prop]{Main Claim}
\newtheorem{obs}[prop]{Observation}
\newtheorem{quest}[prop]{Question}
\newcommand{\mc}[1]{\mathcal{#1}}
\newcommand{\oo}{\omega}
\newcommand{\uhr}{\upharpoonright}
\newcommand{\omg}{{\omega_1}}
\DeclareMathOperator{\Fn}{Fn}
\DeclareMathOperator{\htt}{ht}
\def\<{\left\langle}
\def\>{\right\rangle}
\def\br#1;#2;{\bigl[ {#1} \bigr]^ {#2} }
\newcommand{\gen}[2]{\textmd{Gen}(#1,#2)}
\newcommand{\setm}{\setminus}
\newcommand{\subs}{\subset}
\newcommand{\dom}{\operatorname{dom}}
\newcommand{\ran}{\operatorname{ran}}
\newcommand{\vareps}{\varepsilon}
\title[Suslin trees and minimal linear orders]{A model with Suslin trees but no minimal uncountable linear orders other than $\omg$ and $-\omg$}
\date{\today}
  \author{D\'aniel T. Soukup}
  \address[D.T. Soukup]{Universit\"at Wien,
Kurt G\"odel Research Center for Mathematical Logic, Wien, Austria}
 \email[Corresponding author]{daniel.soukup@univie.ac.at}
 \urladdr{http://www.logic.univie.ac.at/$\sim  $soukupd73/}
\newtheorem*{rep@theorem}{\rep@title}
\newcommand{\newreptheorem}[2]{%
\newenvironment{rep#1}[1]{%
 \def\rep@title{#2 \ref{##1}}%
 \begin{rep@theorem}}%
 {\end{rep@theorem}}}
\subjclass[2010]{03E35, 03E04, 06A05}
\keywords{minimal linear order, Suslin tree, Aronszajn tree, rigid, iteration, preservation}
\begin{document}
 \begin{abstract}  
We show that the existence of a Suslin tree does not necessarily imply that there are uncountable minimal linear orders other than $\omg$ and $-\omg$, answering a question of J. Baumgartner.  This is done by a Jensen-type iteration, proving that one can force CH together with a restricted form of ladder system uniformization on trees, all while preserving a rigid Suslin tree.

\end{abstract}
\maketitle

One can quickly see that any infinite linear order either contains a copy of $\oo$ (the order type of the natural numbers) or its reverse $-\oo$.  
In other words, $\pm\omega$ forms a 2-element basis for infinite linear orders. Also, $\oo$ and $-\oo$ are the only \emph{minimal infinite linear orders} in the sense that they embed into each of their infinite suborders.


\begin{wrapfigure}[13]{r}{.5\textwidth}
\centering
 \includegraphics[width=0.4\textwidth]{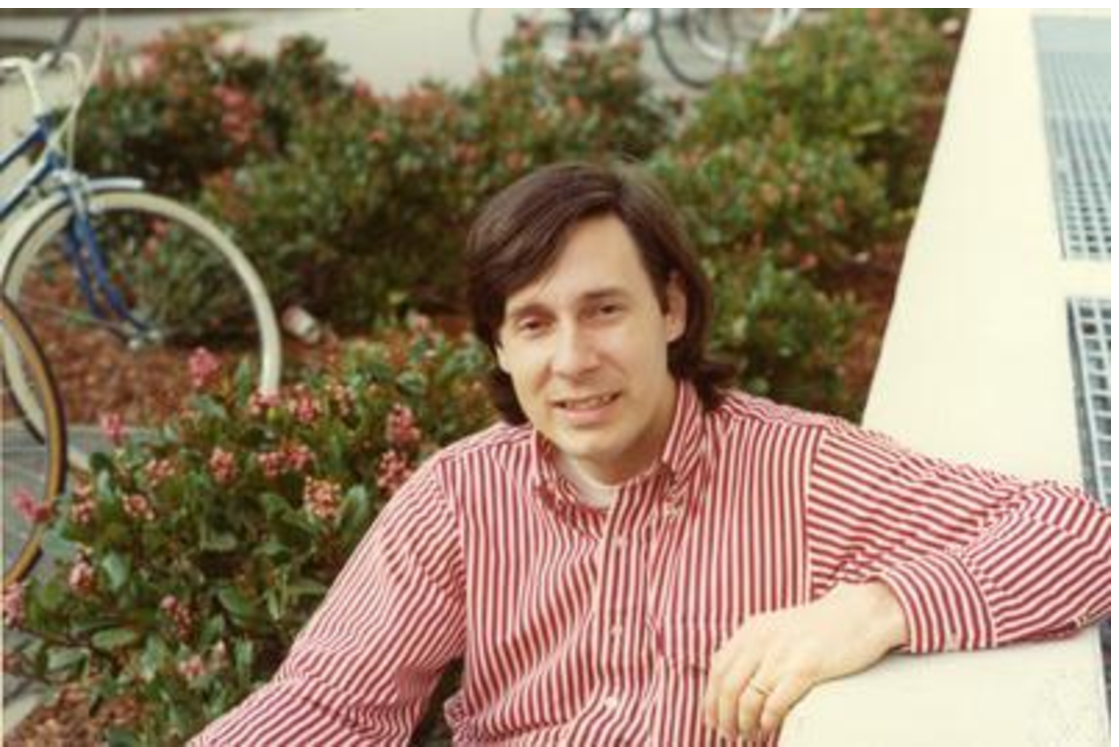}
 
{\small J. Baumgartner in Oberwolfach, 1975\\

(Copyright: George M. Bergman, Berkeley)}
\end{wrapfigure}

Our goal is to address a question of James Baumgartner from the seminal 1982 paper \cite{baumorder} that concerned the analysis and classification of minimal uncountable order types.
Let us first briefly summarize the results that preceded \cite{baumorder}, and the main points of the theory relevant to our paper. The ordinal $\omg$ and its reverse $-\omg$ are \emph{minimal uncountable order types} in the sense that they embed into each of their uncountable suborders. Now, a suborder $L$  of the real line can not contain copies of $\pm \omg$ and, interestingly, there may or may not be minimal uncountable real order types: the Continuum Hypothesis (CH) implies that no uncountable suborder of the reals is minimal \cite{sierpinski1932probleme}, while the Proper Forcing Axiom (PFA) implies that any two $\aleph_1$-dense suborder of the reals are isomorphic \cite{baumgartner1973alphall}. In turn, any $\aleph_1$-dense suborder of the reals is minimal if PFA holds.

There are order types, called \emph{Aronszajn orders}, which embed no copies of $\pm \omg$ nor uncountable suborders of the reals. Baumgartner \cite{baumorder} sketched the construction of a minimal Aronszajn type using $\diamondsuit^+$,\footnote{For a presentation in full detail see \cite[Theorem 18]{stronglysurj}.} and he emphasized multiple (now mostly solved) problems that motivated research in combinatorial set theory in the following 30 years.

Soon after, Stevo Todorcevic \cite{todorcevicpart} introduced the Countryman-line $C(\rho_0)$,\footnote{Recall that a \emph{Countryman line} is an uncountable linear order $C$ so that $C^2$ is the union of countably many chains in the coordinate wise ordering.} which is  minimal under $\textmd{MA}_{\aleph_1}$ \cite[Theorem 2.1.12]{walks}. One culmination of this line of research  came from Justin Moore, who  proved that any Countryman line together with its reverse forms a 2-element basis for Aronszajn linear orders under PFA; in turn, if we add $\pm \omg$ and an $\aleph_1$-dense set of reals, we have a 5-element basis for all uncountable linear orders \cite{moore2006five}.   

Complementing the above results, Moore went on to show that consistently, the only minimal uncountable linear orders are $\pm \omg$ \cite{justinmin}, and this theorem is where our interest lies. Embeddings of Aronszajn linear orders are closely tied to tree embeddings of certain associated Aronszajn trees, and indeed, Moore's result is built on finding a model of CH where a weak version of the ladder system uniformization property holds for trees.

 Baumgartner \cite{baumorder} also asked if, rather than using $\diamondsuit^+$, the existence of a single Suslin tree suffices to find a minimal Aronszajn type. The main result of our paper is a negative answer to this question: we construct a model of CH with a rigid Suslin tree $R$, so that any Aronszajn tree $A$ either embeds a derived subtree of $R$, or $A$ satisfies Moore's uniformization property. All together, we will see that this implies that the only minimal uncountable linear orders are $\pm \omg$ in our model, yet a Suslin-tree exists.

In order to achieve this result, we will apply Ronald Jensen's technique for constructing a ccc forcing iteration of length $\oo_2$ such that each initial segment of this iteration is a Suslin tree itself \cite{devlin2006souslin}. 
Originally, this method was developed to produce a model of CH without any Suslin trees, however, and lucky for us,  Uri Abraham and Saharon Shelah \cite{abrahamiso} developed an analogous iteration theorem which does preserve a fixed Suslin tree $R$ given that each successor step of the iteration preserves $R$ in a strong sense. Let us mention that our framework also provides an alternative way to show Moore's result on minimal linear orders \cite{justinmin}, but using a countably closed preparatory forcing followed by a ccc iteration adding no new reals.

\medskip

First, in Section \ref{sec:prelim}, we will define the various tree-and uniformization properties that we need, and explain in Lemma \ref{lm:base} how an appropriate combination of these can be used to achieve our result on Suslin trees and minimal linear orders. In Section \ref{sec:forceoutline}, we state our Main Theorem \ref{thm:main} that solves the problem of uniformizing a ladder system colouring on a certain tree $A$ while preserving another Suslin tree $R$.  We end this section by showing how to piece together our result (used in successor stages of the iteration), and  the Abraham-Shelah iteration theorem (for limit stages of our forcing) to produce the desired model (see Corollary \ref{cor:main}). Finally, in Section \ref{sec:proof}, we prove our main theorem. We close our paper with a few remarks on our approach and directions for future research.
%

\subsection*{Acknowledgments} We thank S. Friedman and J. Moore for helpful comments.  The author was supported in part by the FWF Grant I1921.

\section{Preliminaries on trees and uniformization}\label{sec:prelim}

\subsection*{Notation} We use fairly standard notation consistent with classical textbooks (e.g. \cite{kunen}), however in our forcing arguments the stronger conditions are larger (in this aspect, we follow \cite{abrahamiso} as we build on the forcing iteration framework there).

For a set $X$, we let $\Fn(X,n)$ denote the set of finite partial functions from $X$ to $n$. For a partial order $\mc P$ and countable set $N$, we let $\gen{N}{\mc P}$ denote the set of $N$-generic filters in $\mc P$ i.e., filters $H\subs \mc P$ so that any dense $D\subset \mc P$ with $D\in N$ meets $H$. The Rasiowa-Sikorski lemma says that, for any countable $N$, $\gen{N}{\mc P}$ is non empty.
\medskip

Now, let us review the basic notions we will use, and state the most important facts that will help us prove our main result.

\subsection*{Trees} 
By \emph{tree}, we mean a partially ordered set $(T,\leq_T)$ so that $t^\downarrow=\{s\in T:s<_T t\}$ is well ordered by $\leq_T$.\footnote{We usually omit the subscript from $\leq_T$ if it leads to no confusion.} We write $T_\alpha$ for the set of $t\in T$ such that $t^\downarrow$ has order type $\alpha$ (where $\alpha$ is an ordinal); these sets are the levels of $T$, and the height of $T$ is the minimal $\alpha$ so that $T_\alpha=\emptyset$. If $t\in T_\alpha$ and $\xi<\alpha$ then we let $t\uhr \xi$ denote the unique element of $t^{\downarrow}\cap T_\xi$.

An $\aleph_1$-tree is a tree $T$ of height $\omg$ with countable levels. By a \emph{subtree} $S$ of $T$, we will always mean a downward closed subset $S\subs T$ which is also pruned: for any $s\in S_\alpha$ and $\alpha<\beta$ less than the height of $T$, there is $t\in S_\beta$ with $s\leq t$. Given $s\in T$, we will write $T_s$ for the subtree $\{t\in T:s\leq t\}$. 

Given a function $f$ with $\dom f\subs T$ downward closed, we write $\htt(f)$ for the height of $\dom f$ with the tree order inherited from $T$.

\medskip

Now, an \emph{Aronszajn tree} is an $\aleph_1$-tree with no uncountable chains, and a \emph{Suslin tree} is an Aronszajn tree with no uncountable antichains (see \cite{kunen} on the existence of such objects). 
The correspondence between trees and linearly ordered sets are described in detail in \cite{stevotrees}, but we will not really use that analysis in our work.
\medskip

We call a tree $R$ \emph{full Suslin} if all its derived trees $R'=R_{a_1}\otimes \dots \otimes R_{a_n}$ are Suslin where $a_i\neq a_j\in R_\delta$ for $1\leq i<j\leq n$ and a fixed $\delta<\omg$.\footnote{$R_{a_1}\otimes \dots \otimes R_{a_n}=\{(t_i)_{1\leq i\leq n}:a_i\leq_R t_i\in T_\vareps$ for some fixed $\vareps<\omg\}$ with the coordinate wise partial order.}
We write $\partial R$ for the set of all derived trees $R'$ of $R$. The original rigid Suslin tree constructed by Jensen using $\diamondsuit$ is full Suslin actually \cite[Theorem V.1]{devlin2006souslin}.

Given two trees $S,T$, a \emph{club-embedding of $T$ into $S$} is an order preserving injection $f$ defined on $T\uhr C= \bigcup\{T_\alpha:\alpha\in C\}$ where $C\subs \omg$ is a club (closed and unbounded subset), with range in $S$.

We will use two crucial facts on full Suslin trees. First, full Suslin trees are rigid in a strong sense:

\begin{fact}\label{lm:rigid} Suppose that $R$ is a full Suslin tree, $D\subseteq \omg$ is a club, and $s\neq t\in S$ are of the same height. Then there is no order preserving injection from  $R_s\uhr D$ to $R_t\uhr D$. In particular, any order preserving injections $R_s\uhr D\to R_s\uhr D$ must be the identity.
\end{fact}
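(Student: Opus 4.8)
The plan is to argue by contradiction, exploiting the fact that an order preserving injection between derived trees of $R$ must, on a club, respect the tree structure tightly enough to produce an uncountable antichain in one of the full-Suslin derived trees. Suppose $f\colon R_s\uhr D\to R_t\uhr D$ is order preserving and injective, where $s\neq u$ (I will write $u$ for the second node to avoid clashing with the ambient tree name) lie in $R_\delta$ for some fixed $\delta<\omg$. Since $f$ is order preserving and $R_s\uhr D$ has countable levels with no uncountable chains, a standard pressing-down / Fodor argument shows that on a club $D'\subseteq D$ the map $f$ is level-preserving up to a fixed ordinal shift; shrinking $D$ we may assume $f$ sends $(R_s)_\alpha\uhr D$ into $(R_u)_\alpha\uhr D$ for each $\alpha\in D$, i.e. $\htt(f(r))=\htt(r)$ for all $r$ in the domain. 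The point of fullness is that it is preserved under passing to subtrees and to such club restrictions, so both $R_s\uhr D$ and $R_u\uhr D$ still behave like Suslin trees for the purposes of the antichain argument below.

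Next I would form the ``graph-like'' subtree: consider $R' = R_s\otimes R_u$, which is one of the derived trees of $R$ and hence Suslin by fullness (note $s\neq u$ are at the same level $\delta$). The assignment $r\mapsto (r, f(r))$, for $r$ ranging over $R_s\uhr D$, is a level preserving injection of $R_s\uhr D$ into $R'\uhr D$ whose range $G$ is an uncountable subset of $R'$. I then want to extract from $G$ an uncountable antichain of $R'$, which contradicts Suslinness of $R'$. To see $G$ has no uncountable chain would not immediately suffice (Aronszajn trees can have club-sized level-injective images), so instead the right move is: since $R_s$ itself is Suslin, $G$ viewed through its first coordinate is the image of an uncountable set, and one shows that any two $\leq_{R'}$-comparable elements $(r,f(r))<_{R'}(r',f(r'))$ force $r<_{R_s} r'$ and $f(r)<_{R_u} f(r')$ and, crucially via injectivity and the rigidity one is trying to prove, cannot happen too often — here is where the delicate part sits.

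So the main obstacle, and the step I would spend the most care on, is ruling out that $f$ could be an order-isomorphism onto its range that is genuinely ``nontrivial'': I need a combinatorial reason that a level preserving injection $R_s\uhr D\to R_u\uhr D$ with $s\neq u$ cannot exist. The clean way is to intersect with the canonical comparison of the subtrees inside $R$ itself: for $r$ above $s$, both $r$ and $f(r)$ lie in $R$; consider the function $g(r) = f(r)\uhr(\text{appropriate level})$ and compare $r^\downarrow$ with $f(r)^\downarrow$ inside $R$. Because $s\neq u$ at level $\delta$, $r$ and $f(r)$ are incomparable in $R$ for every $r$ in the domain, and one checks that the set $\{r \wedge_R f(r) : r\in \dom f\}$ — meets in the ambient tree $R$ — is contained in $R\uhr\delta$, a countable set, so there is a single node $p\in R_\varepsilon$, $\varepsilon<\delta$, with $r\wedge_R f(r)=p$ for uncountably many $r$. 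Fixing such $p$ and the two immediate successors $p_0\leq_R s$, $p_1\leq_R u$ of $p$ (or rather the level-$\delta$ nodes below $s$ resp. $u$ extending $p$), the uncountably many pairs $(r,f(r))$ now live in $R_{p_0}\otimes R_{p_1}$, which is again a derived tree of $R$ hence Suslin, and by the same level-preservation they give an uncountable level-injective image; a final pressing-down argument inside this Suslin tree produces the required uncountable antichain (two incomparable-in-$R_{p_0}\otimes R_{p_1}$ pairs arising from $r,r'$ with $r\wedge_{R_{p_0}} r'$ and $f(r)\wedge_{R_{p_1}} f(r')$ both strict), contradiction. The ``In particular'' clause then follows by taking $s=t$: any order preserving injection $h\colon R_s\uhr D\to R_s\uhr D$ must be level preserving as above, and if $h(r)\neq r$ for some $r$ then, passing to $r'\geq r$ and the meet $r\wedge_R h(r)$, the same derived-subtree argument applies with two distinct level-$\htt(r)+1$ nodes, again contradicting Suslinness — so $h$ is the identity.
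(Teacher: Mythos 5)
Your overall strategy --- extract an uncountable antichain of the Suslin derived tree $R_s\otimes R_u$ from the graph $G=\{(r,f(r)):r\in R_s\uhr D\}$ --- cannot succeed, and the failure is precisely at the step you yourself flagged as delicate. In the coordinatewise order, if $r<r'$ then $f(r)<f(r')$ (as $f$ is order preserving), so $(r,f(r))<(r',f(r'))$; and if $r\perp r'$ then the first coordinates are already incomparable, so $(r,f(r))\perp(r',f(r'))$. Thus two points of $G$ are comparable in the product exactly when their first coordinates are comparable in $R_s$, i.e.\ $G$ is order-isomorphic to $\dom f\subseteq R_s\uhr D$. Since $R_s$ is Suslin, $G$ therefore contains \emph{no} uncountable antichain (and no uncountable chain): it is just an uncountable Suslin subtree of the Suslin product, which is no contradiction at all. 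Passing to $R_{p_0}\otimes R_{p_1}$ via the meets $r\wedge_R f(r)$ changes nothing (those meets all lie below level $\delta$ anyway, so the pairs already lived in $R_s\otimes R_u$), and the parenthetical ``final pressing-down argument'' producing two incomparable pairs uncountably often is exactly what the Suslinness of $R_s$ forbids. The obstruction is structural, not a missing refinement of the club.

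What Suslinness of the product actually buys is \emph{genericity}, not antichains, and that is how the paper argues: take a countable $M\prec H(\Theta)$ with $f,D,R,s,t\in M$, let $\delta=M\cap\omg\in D$, and pick $s\leq s^*\in R_\delta$ with $t^*=f(s^*)$. A node at level $M\cap\omg$ of a Suslin tree belonging to $M$ determines an $M$-generic branch, so $(s^*,t^*)$ determines an $M$-generic branch of $R_s\otimes R_t$; by the product lemma the second coordinate is then $M[s^*]$-generic for $R_t$. But $t^*=f(s^*)$ is definable from $s^*$ and $f\in M$, hence lies in $M[s^*]$ --- contradicting genericity. (Equivalently, in forcing terms: forcing with $R_s\otimes R_t$ adds a branch $b$ of $R_s$ while $R_t$ stays Aronszajn in $V[b]$, yet $f[b]$ would be a cofinal branch of $R_t$ there.) Your preliminary reduction to a level-preserving map is fine (it is the paper's Observation), but the core of the proof has to go through elementarity/genericity; the purely combinatorial antichain route is a dead end.
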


This result can be extracted from \cite[Lemma 6.7]{stevotrees} or \cite[Theorem V.1]{devlin2006souslin} and the proofs there, but let us present an argument for completeness.
\begin{proof}
 Suppose that $f:R_s\uhr D\to R_t\uhr D$ is order preserving. Take a countable elementary submodel $M\prec H(\Theta)$ so that $f,D,R,s,t\in M$. Note that $\delta=M\cap \omg\in D$. Pick any $s\leq s^*\in R_\delta$ and let $t^*=f(s^*)\in R_t$. Now, since $R_s\otimes R_t$ is Suslin, the branch determined by $(s^*,t^*)$ should be $M$-generic for $R_s\otimes R_t$. By the so called product lemma (see \cite[Lemma I.8]{devlin2006souslin}), $t^*$ should be $M[s^*]$-generic for $R_t$. However, $s^*,f\in M[s^*]$ implies that $t^*=f(s^*)\in M[s^*]$, a contradiction.
\end{proof}


The second fact reflects on the preservation of Aronszajn trees when forcing with a full Suslin tree.

\begin{fact}\label{lm:embed}
\cite[Lemma 3.2]{abrahamiso} For any full Suslin tree $R$  and Aronszajn tree $A$, either
\begin{enumerate}
 \item $\force_{R'}$``$A$ is Aronszajn'' for any $R'\in \partial R$, or
 \item some $R'\in \partial R$ can be club-embedded into $A$.
\end{enumerate}

\end{fact}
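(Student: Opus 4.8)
\emph{Proposal.}
The dichotomy should be read as: \emph{either} forcing with each derived tree of $R$ keeps $A$ Aronszajn, \emph{or} we can read a club-embedding off a generic branch. So suppose item (1) fails and fix $R_0\in\partial R$ together with an $R_0$-name $\dot b$ with $\force_{R_0}$``$\dot b$ is a cofinal branch of $A$'' (one may take $\dot b$ cofinal, passing to the downward closure of an uncountable chain). Since, after the obvious identifications, $\partial R_0\subseteq\partial R$, it suffices to club-embed some derived tree of $R_0$ into $A$, and so I would simply rename $R_0$ as $R$ and look for a member of $\partial R$ that club-embeds into $A$.

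Next I would record the basic reading-off. Fix a large regular $\Theta$ and let $C$ be the club of $\delta<\omg$ of the form $M\cap\omg$ for some countable $M\prec H(\Theta)$ with $R,A,\dot b\in M$. Since $R$ is Suslin, for $\delta\in C$ every $s\in R_\delta$ generates an $M$-generic filter, hence $s$ decides $\dot b\uhr\delta$; write $c_s$ for this branch of $A$, a chain meeting each level below $\delta$ exactly once. The assignment $s\mapsto c_s$ on $R\uhr C$ is monotone ($s<s'$ gives $c_s\subseteq c_{s'}$), and since some condition extending $s$ decides $\dot b(\delta)$, the set $N_s$ of nodes of $A_\delta$ lying strictly above $c_s$ is non-empty for every $s\in R_\delta$. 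Note also that $s\mapsto c_s$ cannot be constant on all of $R_\delta$ for all $\delta\in C$, as that would put the cofinal branch $\bigcup_{\delta}c_s$ of $A$ into $V$, contradicting that $A$ is Aronszajn.

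The crux is to upgrade $s\mapsto c_s$ to an honest club-embedding. If, for club-many $\delta\in C$, we had $|N_s|=1$ for all $s\in R_\delta$ \emph{and} $s\mapsto c_s$ injective on $R_\delta$, then putting $\Phi(s)=$ the unique element of $N_s$ gives a level-preserving, order-preserving injection of $R\uhr C$ into $A$: monotonicity is immediate from $\Phi(s')\uhr\delta\in N_s$, and level-preservation plus level-injectivity give global injectivity. The obstacle, and the main point of the proof, is that neither hypothesis need hold for $R$ itself, and both must be repaired by passing to a derived tree $R'=R_{y_1}\otimes\dots\otimes R_{y_k}\in\partial R$. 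The multiplicity $|N_s|>1$ I would kill by committing the name: various extensions of $s$ force $\dot b(\delta)$ to various members of $N_s$, and a fusion/bookkeeping construction along a fixed club should select nodes $y_1,\dots,y_k$ above which $\dot b(\delta)$ is already decided at level $\delta$ for all $\delta$ in a fixed club $C'\subseteq C$. Injectivity is the harder half: a ``collision'' $c_s=c_{s'}$ for distinct $s,s'\in R_\delta$ says $\dot b\uhr\delta$ does not see which of two incompatible nodes was chosen; if collisions occurred cofinally below every node on a stationary set of levels, a pressing-down argument (fixing first the splitting level $\htt(s\wedge s')$ and then $s\wedge s'$ itself, using that the relevant level of $R$ is countable) should let one reconstruct a cofinal branch of $A$ in $V$, again contradicting that $A$ is Aronszajn; hence below a suitable node, collisions are avoided on a club. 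Combining the two repairs yields the required club-embedding of a member of $\partial R$ into $A$. Throughout, \emph{full} Suslinity is precisely what is needed: every derived tree we pass to must again be Suslin for the genericity argument of the second paragraph to apply, and so that the generic branches of a product $R_{a}\otimes R_{b}$ give genuinely distinct branches of $A$. I expect the injectivity/collision step, and the simultaneous choice of the finitely many cone-tips $y_i$, to be the delicate part; the reductions and the model argument are routine.
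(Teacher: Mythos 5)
This Fact is not proved in the paper at all --- it is quoted verbatim from Abraham--Shelah \cite[Lemma 3.2]{abrahamiso} --- so your sketch has to be measured against their argument rather than anything in the text. Your skeleton is the right one: if some $R_0\in\partial R$ forces a (necessarily new) cofinal branch $\dot b$ of $A$, then at levels $\delta=M\cap\omg$ every $s\in (R_0)_\delta$ decides $c_s=\dot b\uhr\delta$, and the entire content of the lemma is converting the monotone branch-assignment $s\mapsto c_s$ into an injection of a derived tree into $A$. You correctly isolate the two obstacles (which node above $c_s$ to choose, and collisions $c_s=c_{s'}$), but the mechanisms you propose for both are where the proof actually lives, and as stated they do not work.

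For the multiplicity step: $N_s$ depends only on $A$ and the branch $c_s$, so no passage to a derived tree of $R$ can shrink it to a singleton; and no choice of cone-tips $y_1,\dots,y_k$ can arrange that a node of height $\delta$ decides $\dot b(\delta)$ for club-many $\delta$ --- since $\dot b\notin V$, it is dense to disagree about $\dot b(\delta)$ above any node, and restricting to cones does not change this. The standard remedy is to give up exact level-preservation (which the definition of club-embedding does not require, and which the Observation after the Fact restores afterwards) and send $s$ to a node \emph{of} $c_s$, i.e.\ to a value of $\dot b$ that is already decided, rather than to a chosen extension of $c_s$ in $A_\delta$. For injectivity: the engine is not pressing down but exactly the mutual-genericity argument from the paper's proof of Fact \ref{lm:rigid}. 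If $s\neq s'$ lie at level $\delta=M\cap\omg$ and split at $\eta<\delta$, then $(s,s')$ is $M$-generic for the product $R_{s\uhr\eta}\otimes R_{s'\uhr\eta}$, which is Suslin by fullness, and since $\dot b\notin V$ it is dense in that product to force the two evaluations of $\dot b$ to diverge; hence $c_s\perp c_{s'}$ automatically for a single cone. Derived trees enter through the hypothesis, not through a collision repair: if only a product $R_{a_1}\otimes\dots\otimes R_{a_n}$ adds a branch to $A$, one takes $n$ minimal, and the same product argument over the shared coordinates shows that a collision would make $\dot b$ computable from a proper sub-product, contradicting minimality. Your pressing-down scheme fixes the meet $s\wedge s'$, but the colliding branches $c_s$ still vary with $\delta$ and with the pair, so no ground-model branch of $A$ is reconstructed from it. In short: correct strategy and correctly located difficulties, but the two repairs are not carried out, and the specific devices you propose for them would fail.
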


By the following observation, we can always suppose that club-embeddings are level preserving:

\begin{obs}
 Suppose that $f:T\uhr C\to S$ is a club-embedding of $T$ into $S$. Then there is some  club $D\subs C$ and order and level preserving $\hat f:T\uhr D\to S\uhr D$ that  satisfies $\hat f(t)\leq f(t)$.
\end{obs}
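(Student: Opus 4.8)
The plan is to repair the one defect of $f$ — that it need not respect levels — by passing to a sub-club $D\subseteq C$ on which $f$ is ``height-tame'', and then simply truncating: $\hat f(t):=f(t)\uhr\htt(t)$ for $t\in T\uhr D$. Two auxiliary clubs do the bookkeeping. First, let $D_0$ be the club of closure points of $C$, i.e.\ the $\delta$ with $\operatorname{otp}(C\cap\delta)=\delta$ (the fixed points of the enumeration of $C$). For $\delta\in D_0$ and $t\in T_\delta$, writing $C\cap\delta=\{\xi_\gamma:\gamma<\delta\}$ increasingly, the chain $\{f(t\uhr\xi_{\gamma'}):\gamma'<\gamma\}$ lies below $f(t\uhr\xi_\gamma)$, so that node has height $\ge\gamma$; hence $\htt(f(t))\ge\delta$, and $f(t)\uhr\delta$ is defined and lies in $S_\delta$. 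Second, put $g(\alpha):=\sup\{\htt(f(x))+1:x\in T\uhr C,\ \htt(x)<\alpha\}$, a countable ordinal since $T\uhr(C\cap\alpha)$ is countable; $g$ is non-decreasing, so its closure points form a club $D_1$, and for $\delta\in D_1$ every node of $T\uhr C$ of height $<\delta$ has $f$-image of height $<\delta$. Set $D:=D_0\cap D_1\cap\mathrm{Lim}(C)$ and $\hat f(t):=f(t)\uhr\htt(t)$. By the first point $\hat f$ is level preserving with range in $S\uhr D$, and $\hat f(t)\le f(t)$.

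Next I would verify order preservation, which is immediate: if $s<t$ in $T\uhr D$ with $\htt(s)=\delta<\delta'=\htt(t)$, then $\hat f(s)=f(s)\uhr\delta\le f(s)<f(t)$, so $\hat f(s)$ lies on the branch of $f(t)$ at level $\delta$, whence $\hat f(s)=(f(t)\uhr\delta')\uhr\delta=\hat f(t)\uhr\delta<\hat f(t)$.

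The heart of the matter is that $\hat f$ also reflects the order — hence is injective and in fact a genuine tree embedding — and this is where the two clubs earn their keep. Suppose $s,t\in T\uhr D$ with $\hat f(s)\le\hat f(t)$; comparing levels, $\htt(s)=:\delta\le\delta':=\htt(t)$. Assume toward a contradiction that $s\not\le t$ and put $\beta:=\htt(s\wedge t)<\delta$. Since $\delta\in\mathrm{Lim}(C)$ we may pick $\xi\in C$ with $\beta<\xi<\delta$; then $a:=s\uhr\xi$ and $b:=t\uhr\xi$ are \emph{distinct} nodes of $T\uhr(C\cap\delta)$. Because $f$ is a club-embedding, $f(a)$ and $f(b)$ are incomparable; because $\delta\in D_1$, both have height $<\delta$. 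On the other hand $f(a)<f(s)$ and $\htt(f(a))<\delta=\htt(\hat f(s))$ force $f(a)<\hat f(s)\le\hat f(t)\le f(t)$, while $f(b)<f(t)$ directly. Thus $f(a)$ and $f(b)$ are both predecessors of $f(t)$, so they are comparable — contradiction. Hence $s\le t$; applying this in both directions shows $\hat f(s)=\hat f(t)$ implies $s=t$.

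The only genuinely delicate point is the height bookkeeping: one must ensure the level $\delta$ is \emph{reached} by $f$ (club $D_0$) yet not \emph{overshot from below} (club $D_1$), and that an auxiliary ordinal $\xi$ can be found strictly between $\beta$ and $\delta$, which is why $\mathrm{Lim}(C)$ is also thrown in. Everything else is routine; I do not anticipate any obstacle beyond keeping these three clubs straight. I would also flag that the argument uses that a club-embedding is order \emph{reflecting}, i.e.\ sends incomparable nodes to incomparable ones — this being part of what it means to be an embedding, and also the feature that makes the statement non-vacuous.
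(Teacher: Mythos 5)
Your proof is correct and follows essentially the same route as the paper's: shrink $C$ to a club $D\subs C$ of closure points on which $f$ neither overshoots (nodes of height $<\delta$ map below level $\delta$) nor undershoots (images of level-$\delta$ nodes have height $\ge\delta$, via the order type of $C\cap\delta$), and then truncate, $\hat f(t)=f(t)\uhr\htt(t)$; your write-up just makes explicit the height bookkeeping that the paper compresses into the single assertion $f(t)\in T\uhr[\delta,\min D\setm(\delta+1))$. The one caveat is your final injectivity/order-reflection argument: the paper defines a club-embedding merely as an order preserving \emph{injection}, which does not by itself send incomparable nodes to incomparable ones, so that step invokes a hypothesis beyond the stated definition --- but since the Observation only claims an order and level preserving $\hat f$, this extra portion does not affect the correctness of what was to be proved.
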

\begin{proof}
 Given $f:T\uhr C\to S$, we can find a club $ D\subs  C$  so that $\delta\in D$ implies that $$f[T\uhr (\delta\cap C)]\subs S_{<\delta}.$$ Now, note that if $t\in T_\delta$ for some $\delta\in D$ then $f(t)\in T\uhr [\delta,\min D\setm (\delta+1))$, so we can let $\hat f(t)=f(t)\uhr \delta$. Now $\hat f(t)<\hat f(t')$ for any $t<t'\in T\uhr D$ and $\hat f$ is as desired. 
\end{proof}

We also need some lemmas from Moore's framework:

\begin{lemma}\label{lm:clubmin}
\cite[Lemma 2.9]{justinmin} If there is a minimal Aronszajn type then there is an Aronszajn tree $A$ which is \emph{club-minimal} i.e. $A$ can be club-embedded into any subtree $S$ of $A$.
 
\end{lemma}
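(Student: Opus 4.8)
The plan is to realize $\tau$ by a concrete linear order and push it through the standard dictionary between Aronszajn linear orders and Aronszajn trees. First I would fix a linear order $L$ of the minimal type $\tau$ and, following Todorcevic's analysis \cite{stevotrees}, form the tree $T(L)$ whose nodes are the closed bounded subsets of $L$ of countable order type, ordered by ``is an initial part of''. A standard closing-off argument shows $T(L)$ is an Aronszajn tree: it has no uncountable branch (a branch would union to a copy of $\pm\omg$ inside $L$), and its levels are countable (this is where one uses that $L$ contains no uncountable suborder of the reals). The key feature I want from this construction is that each subtree $S\subs T(L)$ canonically determines an uncountable suborder $L(S)\subs L$ --- morally the union of the nodes appearing in $S$ --- in such a way that every order-embedding $L\to L(S)$ can be promoted to a club-embedding of $T(L)$ into $S$.

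Granting this, the argument is short. I would set $A:=T(L)$ and take an arbitrary subtree $S\subs A$. Then $L(S)$ is an uncountable suborder of the Aronszajn line $L$, so it realizes an Aronszajn type into which $\tau$ embeds by minimality; fixing an order-embedding $e\colon L\to L(S)$ and promoting it through the dictionary yields a club $C\subs\omg$ and, after applying the Observation above, an order- and level-preserving injection $A\uhr C\to S$. Since $S$ was an arbitrary subtree of $A$, the tree $A$ is club-minimal, as required.

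I expect the main obstacle to be exactly the promotion step: turning an arbitrary order-embedding $e\colon L\to L(S)$ into a tree map that preserves order and levels \emph{and lands inside the prescribed subtree $S$}, not merely inside $T(L(S))$. My plan for this is a reflection argument --- take a continuous $\in$-chain $\<M_\delta:\delta<\omg\>$ of countable elementary submodels containing $L$, $T(L)$, $S$, $e$ and $L(S)$, let $C$ be the club of $\delta$ with $M_\delta\cap\omg=\delta$, and for a node $b\in A_\delta$ with $\delta\in C$ map $b$ to an appropriate truncation of the closure of $e[b]$ inside $L(S)$, using that $S$ is pruned together with elementarity to see that this is a genuine node of $S$ at level $\delta$. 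Verifying well-definedness, injectivity and order preservation on the club is the technical heart; it is also the point at which it is convenient to first shrink $L$ to a subline all of whose nonempty open subintervals are uncountable, so that $L(S)$ and its associated tree are well behaved.
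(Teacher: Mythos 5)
First, note that the paper itself gives no proof of this lemma: it is quoted verbatim from Moore \cite[Lemma 2.9]{justinmin}, so you were in effect asked to reconstruct Moore's argument. Your overall strategy --- pass from the minimal line $L$ to an associated tree, read off from each subtree $S$ an uncountable suborder $L(S)$, apply minimality to get an embedding $L\to L(S)$, and promote it to a club-embedding landing in $S$ --- is the right one and is essentially what Moore does. The genuine gap is in the very first step: the tree you build is not an Aronszajn tree. The collection of closed bounded subsets of $L$ of countable order type, ordered by end-extension, is not even a tree unless you restrict to well-ordered nodes (the initial parts of a node of order type, say, $1+\eta$ are not well-ordered); and even after that restriction, its first level already consists of all singletons $\{x\}$ for $x\in L$, so the tree has levels of size $\aleph_1$ and is not an $\aleph_1$-tree. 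The fact you invoke --- that the absence of uncountable real suborders forces countable levels --- is the justification for a different construction, the \emph{partition tree}: fix a continuous filtration $L=\bigcup_{\alpha<\omg}L_\alpha$ into countable sets and let the $\alpha$-th level be the set of convex components of $L\setm L_\alpha$, ordered by reverse inclusion. There, uncountably many components at a single level would yield uncountably many points pairwise separated by the countable set $L_\alpha$, hence an uncountable real type; and an uncountable branch would yield a copy of $\pm\omg$. That is the tree the argument needs.

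With the partition tree in place your plan can be carried out, but it still rests on the two facts you only gesture at: (i) a pruned downward-closed subtree $S$ of a partition tree of $L$ determines an uncountable $L(S)\subs L$ together with a partition tree of $L(S)$ sitting inside $S$ on a club, and (ii) an order embedding between Aronszajn lines induces a club-embedding between their partition trees (this is where your elementary-submodel reflection belongs). These are exactly the supporting lemmas preceding Lemma 2.9 in \cite{justinmin}, and they are the technical heart of the proof; as written, your recipe of mapping a node $b$ to ``a truncation of the closure of $e[b]$'' is tailored to the (broken) tree of well-ordered subsets and does not transfer verbatim to partition trees, whose nodes are convex pieces of $L$ rather than small subsets. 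So the proposal identifies the correct route but, as it stands, does not constitute a proof.
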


\subsection*{Uniformization}

A ladder system $\underline \eta$ is a sequence $(\eta_\alpha)_{\alpha\in \lim(\omg)}$ so that $\eta_\alpha$ is a cofinal subset in $\alpha$ of order type $\oo$. An $n$-coloring of $\underline \eta$ is a sequence $\underline h=(h_\alpha)_{\alpha\in \lim(\omg)}$ so that $h_\alpha:\eta_\alpha\to n$. We say that  $\underline h$ is a constant colouring if all the $h_\alpha$ are constant, in which case we can code $\underline h$ by an element of $n^{\lim(\omg)}$.

The main definition is the following:
\begin{dfn}

Given some $\aleph_1$-tree $A$ and $n$-colouring $\underline h$ of a ladder system $\underline \eta$, we say that $f$ is an $A$-uniformization of $h$ if $\dom f=S$ is a subtree of $A$ and for any $\alpha\in \lim(\omg)$, $t\in S_\alpha$ and for almost all $\xi\in \eta_\alpha$, $f(t\uhr \xi)=h_\alpha(\xi)$.
 
\end{dfn}

The main use of this definition is the following:

\begin{lemma}\cite[Lemma 3.3]{justinmin}\label{lm:notclubmin}
If CH holds and for a fixed ladder system $\underline \eta$, any constant  2-colouring of $\underline \eta$ has an $A$-uniformization then $A$ is not club-minimal.
\end{lemma}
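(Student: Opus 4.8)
The plan is to argue by contradiction: suppose $A$ is club-minimal, so $A$ club-embeds into every subtree of itself, and at the same time every constant $2$-colouring of the fixed ladder system $\underline\eta$ has an $A$-uniformization. I would use the club-minimality to manufacture two subtrees of $A$ that ``look the same'' from the point of view of $\underline\eta$ but are forced to receive incompatible uniformizing values, thereby contradicting the uniformization hypothesis. Concretely, under CH one enumerates in order type $\omg$ all relevant countable objects (nodes of $A$, constant colourings restricted to countable pieces, potential club-embeddings), and one builds by recursion on $\delta\in\lim(\omg)$ a constant colouring $\underline c\in 2^{\lim(\omg)}$ together with a bookkeeping of subtrees $S\subseteq A$ so that: for each $\delta$, the value $c(\delta)$ is chosen to \emph{defeat} the $\delta$-th candidate uniformization, i.e. so that no subtree of $A$ living above level $\delta$ can simultaneously carry a uniformization of $\underline c\uhr\delta$ consistent with club-minimality.

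The key steps, in order, would be: (1) Fix the club-minimal $A$ and, using Lemma \ref{lm:clubmin}-style reasoning, note that for every subtree $S\subseteq A$ there is a club $C_S$ and a club-embedding $A\uhr C_S\to S$. (2) Enumerate $\langle (S^\delta, e^\delta) : \delta\in\lim(\omg)\rangle$ listing pairs of a subtree of $A$ and a level-preserving club-embedding $A\uhr C\to S^\delta$, so that every such pair appears and appears with $\delta$ large enough that the relevant initial data is coded into $M_\delta\cap\omg=\delta$ for a suitable continuous elementary chain $\langle M_\delta\rangle$. (3) Recursively define $c(\delta)\in 2$: having fixed $\underline c\uhr\delta$, since any $A$-uniformization $f$ of a constant colouring extending $\underline c\uhr\delta$ has $\dom f$ a subtree and satisfies, for $t$ at a limit level $\alpha$, that $f(t\uhr\xi)=c(\alpha)$ for almost all $\xi\in\eta_\alpha$, one observes that along the branch through a node $t\in A_\delta$ the uniformizing function must eventually be \emph{constant equal to the colour}, so two branches that are identified by a club-embedding $e^\delta$ are forced to agree; choosing $c(\delta)$ to be $1$ minus the value that $e^\delta$ would propagate gives the clash. (4) Finally, take the constant colouring $\underline c$ so produced: by hypothesis it has an $A$-uniformization $f$ with domain a subtree $S$; by club-minimality $A$ club-embeds into $S$ via some $e$, and this pair $(S,e)$ was enumerated at some stage $\delta$; then at level $\delta$ the definition of $c(\delta)$ contradicts the uniformization equation for a node of $S_\delta$ witnessed through $e$.

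The main obstacle I anticipate is step (3)--(4): making the diagonalization actually bite. The subtlety is that an $A$-uniformization only constrains $f$ on \emph{almost all} $\xi\in\eta_\alpha$, and only for nodes $t$ that lie in the domain subtree $S$; so to derive a contradiction at stage $\delta$ I must ensure that the subtree $S$ and the embedding $e$ genuinely interact with the ladder $\eta_\delta$ at level $\delta$ --- i.e. that $\delta\in C\cap\dom(e)$-relevant club and that the nodes of $S_\delta$ are hit by $e$, so that the value $c(\delta)$ is forced in two conflicting ways. This is where CH and the elementary-submodel bookkeeping do the real work: one arranges that $\delta=M_\delta\cap\omg$ belongs to all the clubs named in $M_\delta$, so that the generic-looking node at level $\delta$ sees the embedding $e^\delta$, the colouring $\underline c\uhr\delta$, and the ladder $\eta_\delta$ coherently. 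I would also need the routine but slightly fiddly observation that ``for almost all $\xi\in\eta_\alpha$'' composes correctly under a club-embedding, which is handled by shrinking to a sub-club on which the embedding is level preserving, as in the Observation above. Once the bookkeeping is set up, the contradiction itself is a one-line comparison of values; the architecture of the recursion is the substance.
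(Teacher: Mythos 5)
The paper offers no proof of this lemma; it is quoted verbatim from Moore \cite{justinmin}. Judged on its own, your proposal has two gaps that are missing ideas rather than fiddly details.

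First, the counting in steps (2)--(3) cannot be made to work. A subtree $S\subseteq A$ and a club-embedding $e:A\uhr C\to S$ are uncountable objects, and under CH there are $2^{\aleph_1}$ of each, so the pairs $(S^\delta,e^\delta)$ cannot be listed in order type $\omg$. Even if you pass to countable traces, the recursion is underpowered and, as written, circular: the ``value that $e^\delta$ would propagate'' depends on the uniformization $f$ of the final colouring, which does not exist at stage $\delta$; and to defeat every function that could be the restriction to $A\uhr\delta$ of a future uniformization you must beat $2^{\aleph_0}=\aleph_1$ many candidates with the single bit $c(\delta)$, where different candidates demand different bits. No stage-by-stage diagonalization produces one colouring that is provably killed somewhere. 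The tool that is actually needed is the Devlin--Shelah weak diamond $\Phi$ (a consequence of $2^{\aleph_0}<2^{\aleph_1}$, hence of CH): one defines a predictor $F$ on countable approximations $\bigl(S\cap A\uhr\delta,\ f\uhr(S\cap A\uhr\delta)\bigr)$ that reads off the forced value of $c(\delta)$, and $\Phi$ supplies $g\in 2^{\lim(\omg)}$ agreeing with the prediction stationarily often for \emph{every} input; setting $c=1-g$ yields the contradiction. Your write-up never invokes this, and without it the architecture collapses.

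Second, the step you call ``routine but slightly fiddly'' is false as stated. For a fixed ladder $\eta_\delta$ of order type $\oo$ and a club $C$ with $\delta$ a limit point of $C$, the set $\eta_\delta\cap C$ may be finite or empty, so the condition ``for almost all $\xi\in\eta_\delta$'' does not survive restriction to $A\uhr C$, and shrinking to a sub-club only makes this worse. Hence you cannot transport the uniformization equation along $e^\delta$ by composing $f\circ e^\delta$; the argument must be run on the nodes $e^\delta(t)\uhr\xi$ for $\xi\in\eta_\delta$, which lie in $S$ but generally not in the range of $e^\delta$. This is exactly where the predictor has to see all of $f$ on $S\cap A\uhr\delta$ (and where the ambiguity between cofinal branches of $S\cap A\uhr\delta$ that do and do not extend into $S_\delta$ must be handled), and it is where club-minimality genuinely enters. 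I recommend working through Moore's proof of Lemma 3.3 directly rather than rebuilding it from the statement.
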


Moore \cite{justinmin} showed that  CH is consistent with the statement that for any ladder system $\underline \eta$, any $\oo$-colouring of $\underline \eta$ has an $A$-uniformization for any Aronszajn tree $A$.\footnote{This is rather surprising given the fact that CH implies that for any  $\underline \eta$, there is a constant 2-colouring without an $\omg$-uniformization \cite{devlin1978weak}.} In turn, such a model cannot contain minimal Aronszajn lines.

Let us mention a simple, somewhat technical result for later reference.

\begin{lemma}\label{lm:ctblunif} Suppose that $A$ is a tree of countable height and countable levels, $h:A\to 2$ and  $\underline \eta$ is a ladder system on $\htt(A)$. Then for any $\psi \in \Fn(A,2)$, there is some $f:A\to 2$ extending $\psi$ so that  $f(t\uhr \xi)=^* h(t)$ for any $t\in A_\alpha$, limit $\alpha<\htt (A)$ and almost all $\xi\in \eta_\alpha$.
\end{lemma}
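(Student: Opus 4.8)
The plan is a direct recursive (greedy) construction that exploits two things: $A$ is countable (countable height and countable levels), and the sets of nodes ``controlled'' by distinct limit-level nodes of $A$ are almost disjoint.

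First I would dispose of the degenerate case in which there is no limit ordinal below $\htt(A)$: then the conclusion is vacuous and any total $f\colon A\to 2$ extending $\psi$ works. Otherwise, enumerate the countable set $L=\{t\in A:\htt(t)\text{ is a limit ordinal}\}$ as $\{t_n:n<\oo\}$, put $\alpha_n=\htt(t_n)$, and let $B_n=\{t_n\uhr\xi:\xi\in\eta_{\alpha_n}\}$ be the branch-segment below $t_n$ along which the requirement for $t_n$ speaks; note that $\xi\mapsto t_n\uhr\xi$ is a bijection from $\eta_{\alpha_n}$ onto $B_n$. The aim is to build $f$ taking value $h(t_n)$ at all but finitely many nodes of $B_n$, for every $n$; unwinding the definitions, this is exactly the assertion of the lemma.

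The one genuine point, and the place where I expect the only real work to lie, is the claim that $B_n\cap B_m$ is finite for $n\neq m$. Here I would argue: any $s\in B_n\cap B_m$ is a strict predecessor of both $t_n$ and $t_m$, so $\htt(s)\in\eta_{\alpha_n}\cap\eta_{\alpha_m}$ and $\htt(s)<\min(\alpha_n,\alpha_m)$. We may assume $\alpha_n\le\alpha_m$. If $\alpha_n<\alpha_m$, then $\htt(s)\in\eta_{\alpha_m}\cap\alpha_n$, which is a proper (downward closed) subset of the order type $\oo$ set $\eta_{\alpha_m}$, hence finite. If $\alpha_n=\alpha_m$, then $t_n\neq t_m$ are distinct nodes of the same level, hence incomparable, so $t_n\wedge t_m$ has some height $\gamma_0<\alpha_n$; then $\htt(s)\le\gamma_0$ and $\htt(s)\in\eta_{\alpha_n}\cap(\gamma_0+1)$, again a finite initial segment of $\eta_{\alpha_n}$. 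In either case the set of admissible heights is finite, and since a node of $B_n$ is determined by its height, $B_n\cap B_m$ is finite. (This is precisely why only a ``mod finite'' statement is available --- and why it is available at all.)

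With this in hand the construction is routine. Recursively build an increasing chain $\psi=g_{-1}\subseteq g_0\subseteq g_1\subseteq\cdots$ of partial functions into $2$ with finite domain over $\psi$ by setting, at stage $n$,
\[ g_n=g_{n-1}\cup\{(s,h(t_n)):s\in B_n\setminus\dom g_{n-1}\}. \]
Since $\dom g_{n-1}\cap B_n\subseteq(\dom\psi\cap B_n)\cup\bigcup_{m<n}(B_m\cap B_n)$ is finite by the previous paragraph, $g_n$ takes value $h(t_n)$ on a cofinite subset of $B_n$; and every later stage only extends the domain by nodes lying outside $\dom g_n$, so this persists. Finally let $f=\bigl(\bigcup_n g_n\bigr)\cup\{(s,0):s\in A\setminus\bigcup_n\dom g_n\}$. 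Then $f\supseteq\psi$ and, for every $n$, $f(t_n\uhr\xi)=h(t_n)$ for all but finitely many $\xi\in\eta_{\alpha_n}$; since every $t\in A_\alpha$ with $\alpha<\htt(A)$ a limit appears as some $t_n$, this $f$ is as required.
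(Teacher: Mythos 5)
Your proof is correct, and it takes a genuinely different route from the paper's. The paper first invokes the special case where $A$ is (isomorphic to) an ordinal as well known, and then handles a general $A$ by a Rasiowa--Sikorski argument: it forces with the poset of approximations $g:a^\downarrow\to 2$ ($a\in[A]^{<\oo}$) that already uniformize $h$, uses the ordinal case to see that each $D_t=\{g: t\in\dom g\}$ is dense, and takes the union of a filter meeting the countably many $D_t$. You instead isolate the underlying combinatorial fact explicitly --- the sets $B_t=\{t\uhr\xi:\xi\in\eta_{\htt(t)}\}$, for $t$ on limit levels, form an almost disjoint family of chains --- and then run the obvious greedy recursion. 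This is more self-contained (it in effect reproves the ordinal case rather than citing it) and it treats the side condition $\psi$ explicitly, a point the paper's write-up glosses over; what the paper's formulation buys is that its conditions live on downward closed domains, which matches how the lemma is applied later, but for the statement as given both arguments deliver the required total $f$. Two small remarks. First, your chain $g_0\subseteq g_1\subseteq\cdots$ does not have ``finite domain over $\psi$'' (each $B_n$ is infinite); this is harmless, since all you actually use is that $\dom g_{n-1}\cap B_n$ is finite. Second, your case $\alpha_n=\alpha_m$ tacitly assumes that distinct nodes on a limit level have distinct sets of predecessors, so that $t_n\wedge t_m$ exists strictly below level $\alpha_n$; without this Hausdorff-type convention the lemma itself fails (two limit nodes with identical predecessors and different $h$-values impose contradictory requirements), and the paper's proof needs the same convention for the density of its sets $D_t$, so this is a shared implicit hypothesis rather than a gap in your argument.
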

\begin{proof} First, if $A$ is isomorphic to an ordinal, then this result is well known. 

Now, in general, we 'force': let $\mc Q$ bet the poset of functions $g$ where there is some $a\in [A]^{<\oo}$ so that $g:a^\downarrow \to 2$ and $g$ uniformizes $h$ on $\underline \eta$. Extension is  containment.

The aforementioned special case for ordinals implies that the set $D_t=\{g\in \mc Q:t\in \dom g\}$ is dense in $\mc Q$ for any $t\in A$. So, we can take a filter  $G\subseteq \mc Q$ which meets all $D_t$ for $t\in A$ (only countably many dense sets), and so $f=\cup G$  is as desired.
\end{proof}

\medskip

 \subsection*{The main lemma}To summarize the above cited results, we have the following:

\begin{mlemma}\label{lm:base}
 Suppose that CH holds, $\underline \eta$ is a ladder system and $R$ is full Suslin. Suppose further that for any Aronszajn tree $A$, either
 \begin{enumerate}
  \item $\force_{R'}$ ``$A$ is not Aronszajn'' for some $R'\in \partial R$, or 
  \item any constant 2-colouring of  $\underline \eta$ has an $A$-uniformization.
 \end{enumerate} Then there are no minimal uncountable linear orders other than $\pm \omg$.
\end{mlemma}
\begin{proof}
 If there is a minimal  uncountable linear order other than $\pm \omg$, then it has to be Aronszajn by the CH, and so there is an Aronszajn tree $A$ which is club-minimal by Lemma \ref{lm:clubmin}. For this particular tree, condition (2) must fail by Lemma \ref{lm:notclubmin}. So $\force_{R'}A$ is not Aronszajn for some $R'\in \partial R$, which implies that there is a club $D_0$, $R'\in \partial R$ and an order preserving embedding $R'\uhr D_0\to A$ (by Fact \ref{lm:embed}).
 
 In particular, there is $s\in R$ with a level preserving club embedding $f_s:R_s\uhr D_0\to A$. Let $S$ denote the downward closure of $f_s[R_s]$ and note that $S\uhr D_0=f_s[R_s]\uhr D_0$. Since $A$ is club-minimal, we can find $D_1\subs D_0$ with a level preserving embedding $g:A\uhr D_1\to S\uhr D_1$. Now, $g$  must fix each point of $f_s[R_s]\uhr D_1$ by Fact \ref{lm:rigid}, and so $A\uhr D_1$ must be equal $f_s[R_s]\uhr D_1$, which is of course isomorphic to $R_s\uhr D_1$. This is a contradiction, since $A\uhr D_1$ must have non-trivial club embeddings ($A$ being club-minimal), while $R_s\uhr D_1$ has no such embeddings  by Fact \ref{lm:rigid}.
 
\end{proof}

Our goal for the rest of the paper is to show that the assumptions of this lemma are consistent with ZFC (assuming that ZFC itself is consistent).

%
%
%
%

\section{The outline of the forcing construction}\label{sec:forceoutline}

Our aim is to construct a sequence of partial orders $\langle T^\tau:\tau<\oo_2\rangle$, that will serve as our forcing iteration, so that, for all $\tau<\oo_2$,
\begin{enumerate}
 \item  $T^\tau$ is a Suslin tree (which will ensure the ccc and that no new reals are added), and
 \item $T^\tau$ forces an $A$-uniformization for some colouring and Aronszajn tree $A$.
\end{enumerate}
In order to make this sequence $\langle T^\tau:\tau<\oo_2\rangle$ an actual iteration, we will ensure that 
\begin{enumerate}
\setcounter{enumi}{2}
\item $T^\tau$ is a \emph{refinement} of $T^\nu$ for $\nu<\tau<\oo_2$ i.e., there is a club $C\subs \omg$ and a so-called projection $\pi:T^\tau\to T^\nu\uhr C$, that is,
\begin{enumerate}[(i)]
                                                                                                                                                                                                                                              \item $\pi$ is an order preserving  surjection, and
                                                                                                                                                                                                                                              \item if $t\in T^\nu\uhr C$ and $t>\pi(s)$ for some $s\in T^\tau$ then $\pi(s')=t$ for some $s<s'\in T^\tau$.
                                                                                                                                                                                                                                             \end{enumerate}
                                                                                                                                                                                                                                             
       \end{enumerate}
      
       Finally, we will also have a full Suslin tree $R$, so that, for any $\tau<\oo_2$,
       
         \begin{enumerate}
\setcounter{enumi}{3}                                                                                                                                                                                                                                    
                                                                                                                                                                                                                                             
                                                                                                                                                                                                                                             \item  $\force_{T^\tau}R$ is full Suslin.
\end{enumerate}


All of this boils down to two separate goals: we need to specify how to construct $T^{\tau+1}$ from $T^\tau$ (that is, the successor stages of the iteration), and how to build limits for such sequences (while preserving $R$ full Suslin and the refinement property above).

\medskip

First, our main theorem handles the successor stages. We let $\mc C$ denote the \emph{club forcing} $$\mc C=\{(\nu,A):\nu<\omg,A\subs \omg \textmd{ is a club}\}$$ ordered by $(\nu,A)\leq (\nu',A')$ if $\nu\leq \nu', A'\subseteq A$ and $\nu\cap A=\nu\cap A'$; from now on, we reserve the letter $C$ for a $V$-generic club with canonical name $\dot C$. $\mc C$ is a countably closed forcing with the property that the generic club $C$ is contained mod countable in any ground model club $D$. 

Recall that  $\diamondsuit^*$ asserts the existence of a sequence $W=(W_\delta)_{\delta<\omg}$ so that $|W_\delta|\leq \oo$ and for any $X\subs \omg$, the set $\{\delta<\omg:X\cap \delta\in W_\delta\}$ contains a club.

\begin{mthm}\label{thm:main} Suppose that $V$ is a model of $\diamondsuit^*$, and
\begin{enumerate}
                          \item $T$ is a Suslin tree, and $\underline \eta$ is a ladder system in $V$,
                          \item  $V^T\models$``$R$ is full Suslin'', 
                          \item $\dot A, \dot h$ are $T$-names so that $V^{T}\models \dot h\in 2^\omg$, and, for any $R'\in \partial R$, $$V^{T\times R'}\models \dot A \textmd{ is an Aronszajn-tree}.$$ 
                         \end{enumerate}

                         Then, in $V[C]$, there is a refinement $\tilde T$ of $T$ so that 
                         \begin{enumerate}[(a)]
                          \item $V[C]^{\tilde T}\models R$ is full Suslin, and
                          \item $V[C]^{\tilde T}\models$ the constant 2-colouring coded by $\dot h$ on $\underline \eta$ has an $\dot A$-uniformization.
                         \end{enumerate}
\end{mthm}

The proof will be presented in the next section, but let us show the reader how this theorem can be applied to prove our main result on linear orders.

\medskip

The Main Theorem above is complemented by Abraham and Shelah's iteration theorem that we include here for ease of reference;\footnote{This iteration theorem in \cite{abrahamiso} was used to show that consistently, CH holds and there is a full Suslin tree $R$ and special Aronszajn tree $U$ so that, for any Aronszajn tree $A$, either $A$ embeds into $U$ on a club, or there is a derived tree of $R$ that club-embeds into $A$.} let us say that $\sigma$ is an \emph{s-operator}\footnote{Short for successor-operator.} if $\sigma$ is defined on $<\oo_2$ sequences of Suslin trees $\mc T=\langle T^\nu:\nu\leq \tau\rangle$ so that $\sigma(\mc T)$ is a refinement of $T^\tau$. We say that $\sigma$ is $R$-preserving if  $\force_{T^\tau}$ ``$R$ is full Suslin'' implies that $\force_{\sigma(\mc T)}$ ``$R$ is full Suslin''.
Note that our Main Theorem above stated the existence of a particular $R$-preserving s-operator.

\begin{thm}\cite[Theorem 4.14]{abrahamiso}\label{thm:ASh} Suppose that $\diamondsuit$ and $\square$ holds,\footnote{Let us skip the definition of $\diamondsuit$ (a weaker form of $\diamondsuit^*$), and $\square$ and refer the reader to \cite{abrahamiso,devlin2006souslin} since we will only use them to apply this theorem.} and $R$ is a full Suslin tree. Given any $R$-preserving s-operator $\sigma$, there is a sequence  $\langle T^\tau:\tau<\oo_2\rangle$ so that, for any $\tau<\oo_2$,
\begin{enumerate}[(i)]
 \item $\force_{T^\tau}R$ is full Suslin,
 \item $T^{\tau+1}=\sigma(\langle T^\nu:\nu\leq \tau\rangle)$, and
 \item $T^\tau$ is a refinement of $T^\nu$ for all $\nu<\tau$.
\end{enumerate}

\end{thm}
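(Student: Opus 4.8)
The plan is to build the sequence $\langle T^\tau:\tau<\oo_2\rangle$ by transfinite recursion, using the $s$-operator $\sigma$ at successors and a limit-construction (the core of the Abraham–Shelah machinery) at limit stages, all the while maintaining the invariant ``$\force_{T^\tau}R$ is full Suslin'' together with the coherence of the projection maps witnessing that each $T^\tau$ refines every earlier $T^\nu$. Concretely, I would carry along at stage $\tau$ not just the tree $T^\tau$ but the whole system of projections $\pi^\tau_\nu:T^\tau\to T^\nu\uhr C^\tau_\nu$ for $\nu<\tau$, arranged to commute ($\pi^\nu_\mu\circ\pi^\tau_\nu=\pi^\tau_\mu$ on a common club), so that ``refinement'' is really a direct/inverse-limit notion along the recursion.

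First, the successor step is free: given $\langle T^\nu:\nu\le\tau\rangle$ with all the invariants, set $T^{\tau+1}=\sigma(\langle T^\nu:\nu\le\tau\rangle)$. By definition of $s$-operator this is a refinement of $T^\tau$, hence (composing projections) a refinement of every $T^\nu$ for $\nu\le\tau$; and since $\sigma$ is $R$-preserving and $\force_{T^\tau}$``$R$ is full Suslin'' by the inductive hypothesis, we get $\force_{T^{\tau+1}}$``$R$ is full Suslin''. Nothing here requires $\diamondsuit$ or $\square$.

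Second, and this is where the real work and the real hypotheses come in: at a limit ordinal $\lambda<\oo_2$ one must manufacture a single Suslin tree $T^\lambda$ that simultaneously refines all $T^\nu$, $\nu<\lambda$, and keeps $R$ full Suslin over it. One cannot just take an inverse limit of the $T^\nu$ along the $\pi$'s, since the naive limit of a chain of Suslin trees need not be Suslin (indeed, by level $\lambda$-many approximations one typically destroys Suslinity). The device, following Jensen's original limit construction for Suslin forcing iterations as adapted in \cite{abrahamiso}, is to build $T^\lambda$ level by level: the $\delta$-th level of $T^\lambda$ consists of carefully chosen ``threads'', i.e.\ cofinal branches $b$ through the system $\{T^\nu\uhr\delta:\nu<\lambda\}$ compatible with the projections, and at each countable $\delta$ one uses a $\diamondsuit$-guess (a $\diamondsuit$-sequence $\langle A_\delta:\delta<\omg\rangle$ fixed in advance) to seal off any potential maximal antichain of $T^\lambda$ predicted by $A_\delta$, forcing it to be met below level $\delta$; $\square$ is used to organize the bookkeeping of the cofinal sequence into $\lambda$ coherently across all limits $\lambda<\oo_2$ (fixing a $\square$-sequence $\langle e_\lambda:\lambda<\oo_2\rangle$ with $e_\lambda$ cofinal in $\lambda$ of order type $\le\omg$, coherently, so the genericity arguments localize). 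The key verifications at this step are: (1) $T^\lambda$ has countable levels and height $\omg$ (here one needs that only countably many $T^\nu$ are ``active'' below any given $\delta$, which is where $\mathrm{cf}(\lambda)$ and the $\square$-sequence enter, splitting the argument by whether $\mathrm{cf}(\lambda)=\omg$ or $\mathrm{cf}(\lambda)>\omg$); (2) $T^\lambda$ has no uncountable antichains, by the sealing argument plus a reflection/elementary-submodel argument showing every maximal antichain is guessed on a club of $\delta$; and (3) $\force_{T^\lambda}$``$R$ is full Suslin'', which is the analogue of (2) but for antichains of $T^\lambda\times R'$, $R'\in\partial R$, worked out exactly as in \cite[\S 3--4]{abrahamiso}: one shows $R$ stays Suslin after forcing with $T^\lambda$ by the same diamond-sealing of maximal antichains of the product, crucially using that $R$ was full Suslin over each $T^\nu$ and that the $T^\nu$'s approximate $T^\lambda$ nicely.

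The main obstacle is precisely this limit construction — specifically, the simultaneous preservation of Suslinity of $T^\lambda$ \emph{and} of ``$R$ full Suslin over $T^\lambda$'' through a single level-by-level recursion, against arbitrarily chosen $R$-preserving $\sigma$. The delicacy is that the sealing of antichains of $T^\lambda$ (to keep $T^\lambda$ Suslin) and the sealing of antichains of $T^\lambda\times R'$ (to keep $R$ Suslin over $T^\lambda$) must be interleaved without conflict, and one must check the threads chosen for level $\delta$ remain compatible with \emph{all} projections $\pi^\lambda_\nu$ and do not collapse the product forcing. Everything else — the successor step, the commutativity bookkeeping, the ccc and ``no new reals'' consequences — is routine once the limit step is in hand; but since Theorem \ref{thm:ASh} is quoted verbatim from \cite[Theorem 4.14]{abrahamiso}, in the paper itself I would simply cite it and defer these details to that reference.
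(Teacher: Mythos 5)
The paper does not prove this theorem at all: it is quoted verbatim from \cite[Theorem 4.14]{abrahamiso} and used as a black box, so there is no in-paper argument to compare yours against. Your outline is a faithful summary of the actual proof in that reference (and in Jensen's original framework as presented in \cite{devlin2006souslin}): the successor step is indeed free given the $R$-preserving s-operator, and the work is concentrated in the limit step, where one builds $T^\lambda$ level by level from threads through the earlier trees, using $\diamondsuit$ to seal maximal antichains of $T^\lambda$ and of $T^\lambda\times R'$ for $R'\in\partial R$, and $\square$ to choose cofinal sequences in limits below $\oo_2$ coherently. Your closing remark — that in this paper one simply cites the result — is exactly what the author does.
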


\medskip

Finally, putting together the two latter theorems yields the following corollary.

\begin{mcor} \label{cor:main}
 Consistently, CH holds and there is a full Suslin tree, while there are no minimal uncountable linear orders other than $\pm \omg$.
\end{mcor}
\begin{proof} Our goal is to find a model where the assumptions of Lemma \ref{lm:base} are satisfied.
 We start from the constructible universe $L$ (where $\diamondsuit$ and $\square$ holds), and first iterate $\mc C$ with countable support in length $\oo_2$ to add generic clubs $(C_\alpha)_{\alpha<\oo_2}$. We denote the resulting model with $V$, which still satisfies $\diamondsuit$ and $\square$ (see \cite{abrahamiso} for why we can do this). Moreover, any intermediate model $V_\alpha=L[(C_\nu)_{\nu<\alpha}]$ will satisfy $\diamondsuit^*$ as well (for $\alpha<\oo_2$) \cite[Chapter X, Lemma 1]{devlin2006souslin}.
 
 Let $R$ be a full Suslin tree in $V$, and $\underline \eta$ an arbitrary ladder system on $\omg$. Now, we construct an $R$-preserving s-operator in $V$ using our Main Theorem, which in turn gives an iteration sequence of Suslin trees $\langle T^\tau:\tau<\oo_2\rangle$  by the Abraham-Shelah theorem.
 
 To define $\sigma$, suppose we are given some $\mc T=\langle T^\nu:\nu\leq \tau\rangle$ so that $\force_{T^\tau}$ ``$R$ is full Suslin''. An appropriate bookkeeping hands us a $T^\tau$-name $\dot A$ for an Aronszajn tree and $\dot h$ coding a constant 2-colouring of $\underline \eta$. Find the minimal $\alpha<\oo_2$ so that $T^\tau,\dot h, \dot A\in V_\alpha$. 
 
 Now, if $V_\alpha^{T^\tau\times R'}\models$ ``$\dot A$ is Aronszajn'' for all $R'\in \partial R$ then we apply our Main Theorem in $V_\alpha$ to find a refinement $\sigma(\mc T)$ of $T^\tau$ in  $V_\alpha[ C_\alpha]$, so that $V_\alpha[C_\alpha]^{\sigma(\mc T)}\models$ ``$R$ is full Suslin, and the colouring coded by $\dot h$ on $\underline \eta$ has an $\dot A$-uniformization''. Otherwise, we just let $\sigma(\mc T)=T^\tau$.

Now $\langle T^\tau:\tau<\oo_2\rangle$ is given to us by Theorem \ref{thm:ASh}, and we let $T$ be the direct limit of this sequence. We claim that the model $V^T$ is as desired. Indeed, note that $T$ is ccc of size $\aleph_2$, and  $T$ adds no new reals (since any new set of size at most $\aleph_1$ must be introduced by $T^\tau$ for some $\tau<\oo_2$, and $T^\tau$ is a Suslin tree so adds no new reals). Furthermore, for any tree $A$ in $V^T$, either $A$ embeds some derived tree of $R$ on a club or $A$ remains Aronszajn after forcing with any $R'\in \partial R$. In the latter case, for any colouring $h$ of $\underline \eta$, there was an intermediate stage when we uniformized $h$ on $A$.
\end{proof}

This leaves us to prove the Main Theorem. In general, for notions that we might have left undefined in the present paper, or for additional background, let us refer the reader to the very well written papers of Abraham and Shelah \cite{abrahamiso} and to Keith Devlin and Havard Johnsbraten's book on the Suslin problem \cite{devlin2006souslin}, where Jensen's original model of CH with no Suslin-trees is detailed.

\section{The proof of the main theorem}\label{sec:proof}

 
 The current section is devoted entirely to show our Main Theorem, which we break down into a few reasonable segments.
 
 Let us recall the setting first: we have a model $V$, a Suslin tree $T$ considered as a forcing notion, and names $\dot A$ for an Aronszajn tree and $\dot h$ for an element of $2^\omg$.

\subsection*{Some preparations}

First, working in $V$, find a club $F\subs \omg$ and $A(x),h(x)$ for $x\in T_\gamma$ with $\gamma\in F$ so that 
\begin{enumerate}
 \item $x\force_T$ ``$ \dot A\uhr \gamma =A(x)$ and $\dot h\uhr \gamma=h(x)\in 2^\gamma$'',
 
 \item there is a maximal antichain $\mc T_x\subseteq T$ above $x$, countable sets $B_z$  and $i_z\in 2$ for $z\in \mc T_x$ so that, for any $z\in \mc T_x$, 
 \begin{enumerate}
  \item $\htt(z)<\min F\setm (\gamma+1)$ and $z$ decides $\dot A_\gamma$,
  \item $b\in B_z$ iff $z\force_T $ ``$b$ is a branch in $A(x)$ which has an upper bound in $\dot A_\gamma$'', and
  \item $z\force_T$ $ \dot h(\gamma)=i_z$.
 \end{enumerate}\end{enumerate}

 In other words, $(B_z)_{z\in \mc T_x}$ collects the countably many possibilities that can be forced (above $x$) for the $\gamma$th level of $\dot A$. We set $B(x)=\cup\{B_z:z\in \mc T_x\}$. 
 Note that for any $z\in \mc T_x$, and any node $t\in A(x)$, some $b\in B_z$ extends $t$ (which can be written concisely as $\bigcup B_z=A(x)$).
 
 \medskip

  Now, in $V$, take a $\diamondsuit^*$ sequence $W=(W_\delta)_{\delta<\omg}$ which remains a $\diamondsuit^*$ sequence after forcing with $T\times R'$ for any $R'\in \partial R$, and let $W^*_\delta\supseteq W_\delta$ in $V[C]$ (where $C$ is the generic club added by the forcing $\mc C$) so that $W^*$ is still a $\diamondsuit^*$ sequence after forcing with $R'$ for any $R'\in \partial R$ (let us refer the reader to \cite[Chapter VIII]{abrahamiso} for details on why this is possible).
 
 \medskip
 
 Recall that $\mc C$ was the club forcing and $C$ is the $V$-generic club. Working in $V[C]$, the generic club $C$ is mod countable contained in $F$, so we  let $\gamma_0=0$ and let $\{\gamma_\alpha:1\leq\alpha<\omg\}$ enumerate an end-segment of $C$ that is contained in $F$. The fact that  $\tilde T$ refines $T$ will be witnessed by a projection $\tilde T\to T\uhr \{\gamma_\alpha:\alpha<\omg\}$.\footnote{To remind the reader, we need a refinement because our main theorem provides the successor steps of an iteration.}

\medskip

\subsection*{How will the elements of $\tilde T$ look like?} The $\alpha$th level $\tilde T_\alpha $  of the tree $\tilde T$ will consist of  pairs $(x,f)$ so that $x\in T_{\gamma_\alpha}$ and $f:S\to 2$ so that $S\subseteq A(x)$ is downward closed and pruned, and $f$ is a uniformization of the coloring coded by $h(x)\in 2^{\gamma_\alpha}$ on the ladder system $\underline \eta\uhr\gamma_\alpha$.\footnote{I.e., for any $t\in S_\delta$, $f(t\uhr \xi)=h(x)(\delta)$ for almost all $\xi\in \eta_\delta$.} We will ensure that $(x,f)\mapsto x$ is the projection that witnesses that $\tilde T$ is a refinement of $T$.

The extension in $\tilde T$ is defined as follows: $(x,f)\leq (x',f')$ if $x\leq x'$ in $T$ and $f\subseteq f'$. We would like the second coordinates to introduce the desired uniformization, and hence we need that for any $(x,f)$ and $\delta<\omg$, there is some $ (x',f')$ above $(x,f)$ so that $\dom f'$ has height at least $\delta$.

In turn, we require the following \emph{richness property} (RP): 
\medskip
 \begin{center}
  \begin{minipage}[c]{0.7\textwidth}
for any $(x,f)\in \tilde T$ and $z\in\mc T_x$, each  $t\in \dom f$ is extended by some branch  $b\in B_z$ so that $b\subseteq \dom f$ and $f(b\uhr \xi) =i_z$ for almost all $ \xi\in \eta_{\gamma_\alpha}.$ 
 \end{minipage}
 
 \end{center}

\begin{clm}\label{clm:unif}
 Any $\tilde T$ of the above described form with the (RP) will introduce an $\dot A$-uniformization for the constant 2-colouring coded by $\dot h$.
\end{clm}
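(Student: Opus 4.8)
The plan is to work in the extension $V[C][\tilde T]$ by the tree $\tilde T$, and to extract the desired uniformization from the generic branch. Let $G$ be $V[C]$-generic for $\tilde T$ and let $b_G = \{(x_\alpha, f_\alpha) : \alpha < \omg\}$ be the generic branch (using that $\tilde T$ is Suslin, hence $\omg$-branching along the generic filter; more precisely the generic filter picks a cofinal branch). Set $F^* = \bigcup_{\alpha} f_\alpha$ and $S^* = \bigcup_\alpha \dom f_\alpha$, and let $x_G = \bigcup_\alpha x_\alpha$ be the image of the branch under the projection $\pi : \tilde T \to T\uhr\{\gamma_\alpha : \alpha<\omg\}$, so $x_G$ is the corresponding $T$-generic branch. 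First I would argue that $S^*$ is a subtree of $\dot A[x_G]$: each $\dom f_\alpha$ is downward closed and pruned inside $A(x_\alpha) = \dot A\uhr\gamma_\alpha[x_G]$ by the form of the elements of $\tilde T$, and these fit together coherently since $(x_\alpha,f_\alpha)\leq(x_\beta,f_\beta)$ forces $f_\alpha\subseteq f_\beta$; prunedness of $S^*$ at limit levels is exactly where the richness property (RP) is used, as I explain next.

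The heart of the argument is that $S^*$ is pruned all the way up to height $\omg$, and that $F^*$ uniformizes $\dot h$ on $\underline\eta$ along $S^*$. For the uniformization at a limit level $\delta = \gamma_\alpha$: any $t \in S^*_\delta$ lies in $\dom f_\beta$ for some $\beta > \alpha$, and $f_\beta$ is by definition a uniformization of the colouring coded by $h(x_\beta) \in 2^{\gamma_\beta}$ on $\underline\eta\uhr\gamma_\beta$, so $f_\beta(t\uhr\xi) = h(x_\beta)(\delta)$ for almost all $\xi\in\eta_\delta$; and $h(x_\beta)(\delta) = \dot h(\delta)[x_G]$ by item (1) of the preparations (the $h(x)$ cohere with $\dot h$). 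For prunedness: given $t\in S^*_\delta$ with $\delta = \gamma_\alpha$ and a target level $\gamma_{\alpha+1}$, I want an extension of $t$ in $S^*$ at level $\gamma_{\alpha+1}$ — but more importantly I need $S^*$ to be genuinely a subtree of $\dot A[x_G]$, meaning every node of $S^*$ below any prescribed level of $\dot A$ has an extension. The key point is that, for each $\alpha$, the set of conditions $(x,f)$ with $\htt(\dom f) > \gamma_\alpha$ is dense in $\tilde T$ — and this is precisely what (RP) guarantees: given $(x,f)$, pick $z \in \mc T_x$ that the generic will pass through, and use (RP) together with Lemma \ref{lm:ctblunif} (applied inside the countable tree $A(x)$, or rather its relevant piece, using the branches $b\in B_z$ supplied by (RP)) to extend $f$ to a uniformization defined on a downward closed pruned subtree of $A(x')$ reaching the next level, for an appropriate $x' \geq z$ in $T$. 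Thus $b_G$ meets all these dense sets, so $S^*$ reaches height $\omg$ and is pruned, i.e. is a subtree of $\dot A[x_G]$.

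Putting it together: $S^* = \dom F^*$ is a subtree of $\dot A[x_G] = \dot A$ (evaluated by the generic), and for every $\delta\in\lim(\omg)$, every $t \in S^*_\delta$, and almost all $\xi\in\eta_\delta$, $F^*(t\uhr\xi) = \dot h(\delta)$ — this is exactly the statement that $F^*$ is an $\dot A$-uniformization of the constant $2$-colouring coded by $\dot h$ on $\underline\eta$, proving the claim. The main obstacle I anticipate is the density/prunedness step: one has to check carefully that the branches $b\in B_z$ furnished by (RP), which are branches through the ground-model tree $A(x)$ that \emph{do} extend to a node at level $\gamma$ of $\dot A$, can be patched together — across the countably many $z\in\mc T_x$ and across levels — into a single downward closed pruned set carrying a uniformization, and that this is done uniformly enough to land inside $A(x')$ for a single $x'$ extending some $z$. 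Making the bookkeeping of Lemma \ref{lm:ctblunif} mesh with the finitely-many-exceptions clause in (RP) is the delicate part; everything else is a routine genericity unwinding.
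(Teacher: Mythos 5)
Your proposal is correct and follows essentially the same route as the paper: unwind the generic filter, note that each $f_\alpha$ is by definition a uniformization of $h(x_\alpha)$ on $\underline\eta\uhr\gamma_\alpha$, and reduce everything to the density of conditions $(x',f')$ with $\dom f'$ of arbitrarily large height, which is exactly where (RP) and Lemma \ref{lm:ctblunif} enter. The one obstacle you flag --- patching branches across the countably many $z\in\mc T_x$ --- does not actually arise: having fixed $x'$ at the target level, only the unique $z\in\mc T_x$ below $x'$ is relevant, and (RP) for that single $z$ supplies, for each $t\in\dom f$, a branch $b_t\in B_z$ whose upper bound in $A(x')_{\gamma_\alpha}$ can be added without violating the uniformization constraint at the new limit level (since $f(b_t\uhr\xi)=i_z=h(x')(\gamma_\alpha)$ for almost all $\xi\in\eta_{\gamma_\alpha}$).
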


\begin{proof}  
 A $V[C]$-generic filter $G\subs \tilde T$ defines a generic branch $x\subs T$ that evaluates $\dot A$ to be $\dot A[x]=\cup\{A(x\uhr \gamma_\alpha):\alpha<\omg)\}$, and $\dot h$ is evaluated as $\dot h[x]=\cup\{h(x\uhr \gamma_\alpha):\alpha<\omg\}$. The union $f$ of the second coordinates in $G$ defines a function on a subset of $\dot A[x]$ that uniformizes $\dot h[x]$. Finally, we need that $\dom f$ is really a subtree in our sense, which comes down to showing that any condition in $\tilde T$ has arbitrary high extensions, so by genericity, $\dom f$ must be pruned.

That is, we would like to show that the set of conditions $(x',f')$ such that $\dom f'$ has height at least $\delta$ is dense for any $\delta<\omg$.
 Given $(x,f)$ and $\delta<\omg$, we first find $x<x'\in T_{\gamma_\beta}$ with $\gamma_\beta>\delta$. Then, we can take the unique $z\in \mc T_x$ which is compatible with $x'$; note that  $$x'\force_T \dot h\uhr \gamma_\alpha+1=h(x)\cup\{(\gamma_\alpha, i_z)\}.$$
Now, for each $t\in \dom f$, pick $b_t\in \{b\in  B_z:f(b\uhr \xi) =i_z \textmd{ for almost all }\xi\in \eta_{\gamma_\alpha}\}$ so that $t\in b_t$. Extend $\dom f$ by adding unique upper bounds for all the branches $b_t$ from $A(x')_{\gamma_\alpha}$, and let this set be $S_0\supseteq \dom f$. Note that any function $f_0:S_0\to 2$ that extends $f$ is still a uniformization for $h(x)\cup\{(\gamma_\alpha, i_z)\}$. So, pick some pruned $S'\subs A(x')$ that extends $S_0$, and using Lemma \ref{lm:ctblunif},  find $f'\supseteq f_0$ with domain $S'$ such that  $(x,f)\leq (x',f')\in \tilde T$.

\end{proof}

 \subsection*{The extension property} Finally, we need to ensure that the map $(x,f)\mapsto x$ from $\tilde T$ onto $T\uhr \{\gamma_\alpha:\alpha<\omg\}$ is a projection. So, our goal will be to ensure that if $(x,f)\in \tilde T_\alpha$ and $x<x'\in T_{\gamma_\beta}$ for $\alpha<\beta$, then there is some $f'$ so that $(x,f)\leq (x',f')\in \tilde T_\beta$. In fact, we need a stronger property to carry out the inductive construction of $\tilde T$.

 Suppose $x<x'\in T$ and $b\in B(x')$ is a cofinal branch through $A(x')$. If $f:S\to 2$ for some pruned, downward closed $S\subseteq A(x)$ then we say that $b$ is \emph{$x'$-compatible} with $f$ if $$b\uhr \gamma_{\alpha}\subseteq S \textmd{ and } f(b\uhr \xi)=i \textmd{ for almost all  }\xi\in \eta_{\gamma_\alpha} \textmd{ and } i=h(x')(\gamma_\alpha).$$  
 This simply means  that if $x$ forced $f$ to be a uniformization of $\dot h$ so far, then we have the possibility to add an upper bound of $b\uhr \gamma_\alpha$ to $\dom f$ without running into trouble at level $\gamma_\alpha$  with the uniformization (in the universe forced by $x'$). We will say that a finite function $p\in \Fn(B(x'),2)$ is $x'$-compatible with $f$ if each  $b\in\dom p$ is $x'$-compatible with $f$.

 Now, we will assume inductively and preserve the following \emph{extension property} (EP) along the construction of $\tilde T$:
   \medskip
 
   \begin{center}
  \begin{minipage}[c]{0.7\textwidth}
   for any $\alpha<\beta<\omg$, $(x,f)\in \tilde T_\alpha$ and $x<x'\in T_{\gamma_\beta}$, if $p\in \Fn(B(x'),2)$ is $x'$-compatible with $f$ and $\psi\in \Fn(A(x')\setm A(x),2)$  then there is $f'$ so that $(x,f)\leq (x',f')\in \tilde T_\beta$ and 
 \begin{enumerate}
  \item for all $b\in \dom p$, $b\subseteq \dom f'$ and $f'(b\uhr \xi)=p(i)$ for almost all $\xi\in  \eta_{\gamma_\beta}$, and
  \item $\psi(t)=f'(t)$ whenever both are defined.
  \end{enumerate}
  \end{minipage}
  
   \end{center}
  
   \begin{figure}[H]
  \includegraphics[width=.5\textwidth]{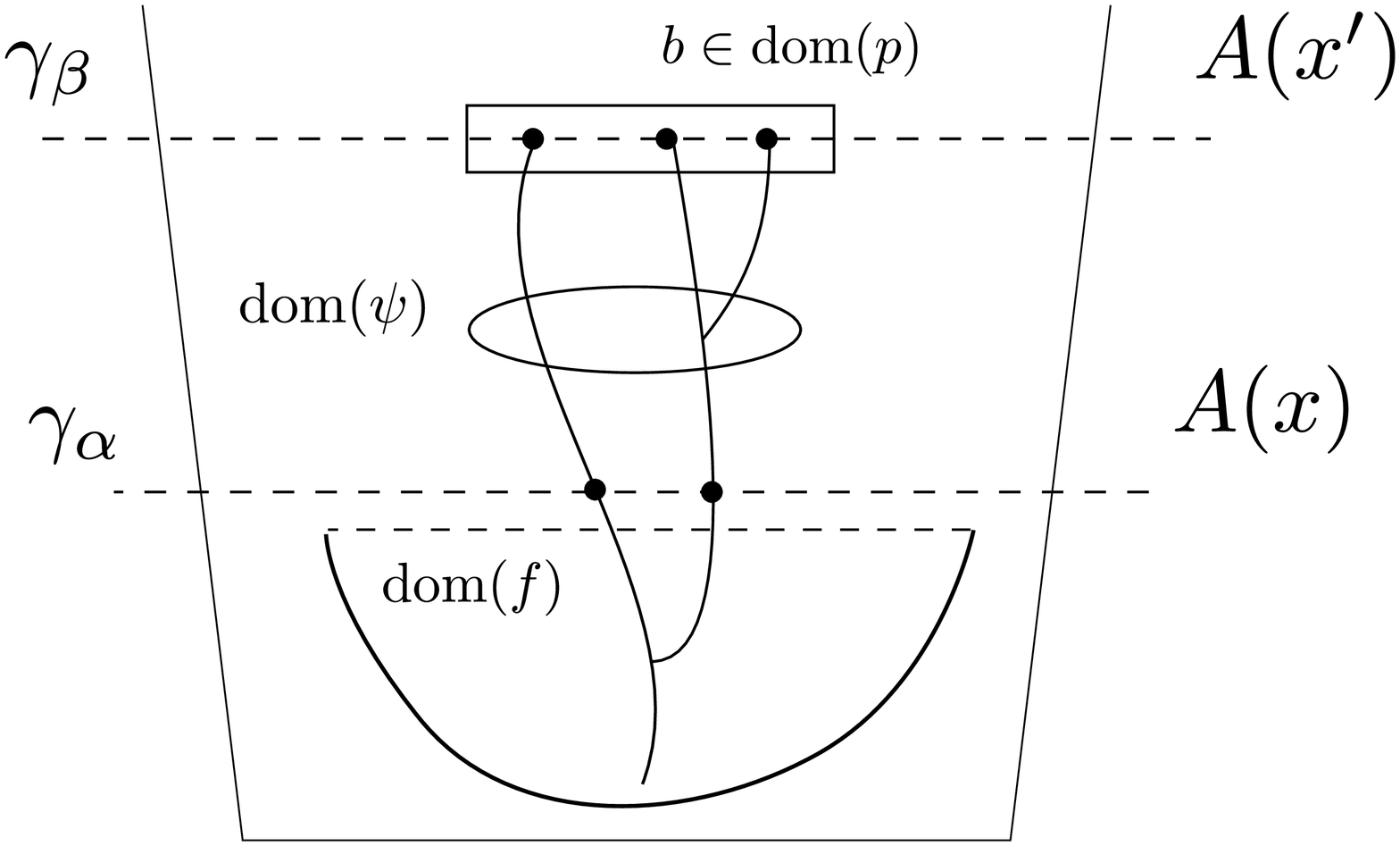}
    \caption{The (EP)}
  \label{diag:ep}
  

 \end{figure}

 Note that only those points $t\in \dom \psi$ matter where the branch $t^\downarrow$ is compatible with $f$, and by extending $p$ we can make sure $f'$ is defined on these $t$ as well. In other words, we can suppose that $\dom \psi\subs \cup \dom p$ (see Figure \ref{diag:ep} for the setting).

 \medskip
 
 \subsection*{The construction of $\tilde T$} The levels $\tilde T_\alpha $ of $\tilde T$ will be constructed by induction on $\alpha<\omg$ preserving the (RP) and (EP). In fact, $\tilde T_\alpha $ will be a result of taking $N_\alpha$-generic filters for appropriate posets (approximating the conditions in $\tilde T_\alpha$), where $(N_\alpha)_{\alpha<\omg}$ is a canonically chosen sequence of  countable elementary submodels. In fact, we let  $N_\alpha$ be $$L_\delta[T\uhr {\gamma_\alpha+1},A\uhr {\gamma_\alpha+1}, R\uhr \alpha+1, C\cap \gamma_\alpha, W^*\uhr \gamma_\alpha+1]$$ for the unique minimal $\delta>\alpha$ which makes this a model of $ZF^-$. This ensures that any model $N$ with the parameters $T\uhr {\gamma_\alpha+1},A\uhr {\gamma_\alpha+1},\dots$ actually contains $N_\alpha$ as an element (and subset). 

 \subsection*{Successor steps -  $\tilde T_{\alpha+1}$ from $\tilde T_\alpha$} Given $\tilde T_\alpha$, we will construct $\tilde T_{\alpha+1}$ while preserving the (RP) and (EP). Fix some $(x,f)\in \tilde T_\alpha$ and $x<x'\in T_{\gamma_{\alpha+1}}$. Define the poset $$\mc P_{x',f}=\{p\in \Fn(B(x'),2):p \textmd{ is } x'\textmd{-compatible with }f\}$$ where extension is simply containment. For each $p_0\in \mc P_{x',f}$, we take a minimal\footnote{Minimal with respect to a fixed well-order of $L_{\omg}$.} $H\in \gen{N_{\alpha+1}}{\mc P_{x',f}}\cap N_{\alpha+2}$ so that $p_0\in H$. Let $p^H=\cup H$, and let $S=\dom f$.
 
\begin{clm} The set $S'=\cup \dom p^H$ is a pruned and  downward closed subtree of $A(x')$, and $S=S'\cap A(x)$.
 \end{clm}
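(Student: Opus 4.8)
The plan is to verify the three assertions about $S' = \bigcup \dom p^H$ by unpacking what $N_{\alpha+1}$-genericity of $H$ buys us for the poset $\mc P_{x',f}$. First I would show $S' \subseteq A(x')$ is \emph{downward closed}: every $b \in \dom p^H$ is, by definition of $\mc P_{x',f}$ (its conditions live in $\Fn(B(x'),2)$), a cofinal branch through $A(x')$, hence a downward closed subset of $A(x')$, and a union of downward closed sets is downward closed. For $S = S' \cap A(x)$: the $\subseteq$ direction uses that each $b \in \dom p^H$ is $x'$-compatible with $f$, so $b \uhr \gamma_\alpha \subseteq S = \dom f$, whence $b \cap A(x) \subseteq S$ (using that $A(x) = A(x') \uhr \gamma_\alpha$ up to the relevant level, and $b$ restricted below $\gamma_\alpha$ lands in $A(x)$); conversely, given $t \in S = \dom f$, I need some $b \in \dom p^H$ with $t \in b$, which is exactly the content of the pruning/richness argument below applied to the level of $t$.

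The crux is \emph{prunedness} of $S'$: given $s \in S'_\varepsilon$ and any level $\zeta$ with $\varepsilon < \zeta < \htt(A(x'))$, I must produce $t \in S'_\zeta$ with $s \le t$. The standard move is a density argument: fix $z \in \mc T_{x'}$ (or rather the relevant $z$ as in the earlier construction), and recall that $\bigcup B_z = A(x')$, so every node of $A(x')$ — in particular the node of $A(x')$ at level $\zeta$ lying above $s$, which exists since $A(x')$ itself is pruned — is extended by some branch $b \in B_z$. I would argue that for each node $u \in A(x')$, the set $D_u = \{ p \in \mc P_{x',f} : u \in b \text{ for some } b \in \dom p \}$ is dense in $\mc P_{x',f}$: given $p_0$, one picks $b \in B_z$ through $u$ which is $x'$-compatible with $f$ (such a $b$ exists — this is where the hypothesis that $f$ was chosen to uniformize $h(x)$ and the abundance of branches in $B_z$ comes in, one uses Lemma~\ref{lm:ctblunif} / the way $f$ was built to ensure cofinally many branches through any node behave correctly at level $\gamma_\alpha$), and sets $p = p_0 \cup \{(b,i_z)\}$, or extends $p_0$ on $b$ consistently. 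Since $D_u \in N_{\alpha+1}$ (it is definable from $\mc P_{x',f}, u \in A \uhr \gamma_{\alpha+1}+1$, all of which are in $N_{\alpha+1}$) and $H$ is $N_{\alpha+1}$-generic, $H \cap D_u \neq \emptyset$, so $u$ is covered by $\dom p^H$. Applying this to $u$ the unique node of $A(x')_\zeta$ above $s$ gives the desired $t$.

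I expect the main obstacle to be the density of $D_u$: one must be careful that there genuinely is a branch $b \in B_z$ passing through the prescribed node $u$ and \emph{simultaneously} $x'$-compatible with $f$, i.e. $b \uhr \gamma_\alpha \subseteq \dom f$ and $f$ takes the right constant value along $b \uhr \eta_{\gamma_\alpha}$ cofinally. This is not automatic from $\bigcup B_z = A(x')$ alone; it relies on an inductive hypothesis about $f = \dom f$ being "rich enough" — precisely the (RP) maintained at stage $\alpha$, which guarantees each $t \in \dom f$ is extended by a branch in $B_z$ inside $\dom f$ with the correct colouring. So the argument for $D_u$ dense will invoke (RP) for $\tilde T_\alpha$ to handle nodes $u$ below level $\gamma_\alpha$, and for nodes at or above $\gamma_\alpha$ one uses that $B_z$ surjects onto $A(x')$ together with the freedom to extend $p_0$; the two cases should be combined cleanly. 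Everything else ($S' \subseteq A(x')$, downward closure, $S = S' \cap A(x)$) is then routine bookkeeping.
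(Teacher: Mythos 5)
Your treatment of downward closure and of the equality $S=S'\cap A(x)$ matches the paper's: one inclusion comes from $x'$-compatibility (each $b\in\dom p^H$ satisfies $b\uhr\gamma_\alpha\subseteq\dom f$, so $S'\cap A(x)\subseteq S$), and the reverse inclusion is a genericity argument: for $t\in\dom f$, the set of conditions containing a branch through $t$ is dense, and density is exactly where (RP) for $(x,f)$ enters, since it supplies (infinitely many) branches $b\in B_z$ through $t$ lying inside $\dom f$ with $f(b\uhr\xi)=i_z$ almost everywhere on $\eta_{\gamma_\alpha}$, each of which extends to an $x'$-compatible branch in $B(x')$. That part of your proposal is sound and is essentially the paper's proof.

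The problem is your treatment of prunedness. You make it the ``crux'' and base it on the claim that $D_u=\{p\in\mc P_{x',f}: u\in b \text{ for some } b\in\dom p\}$ is dense for \emph{every} node $u\in A(x')$. That claim is false: if $u\uhr\gamma_\alpha\notin\dom f$, or if $f$ fails to take the value $h(x')(\gamma_\alpha)$ almost everywhere along $u^{\downarrow}\cap\eta_{\gamma_\alpha}$, then no branch through $u$ is $x'$-compatible with $f$, so no condition of $\mc P_{x',f}$ mentions such a branch and $D_u$ is empty. You sense the obstacle, but the proposed repair (``for nodes at or above $\gamma_\alpha$ use that $B_z$ surjects onto $A(x')$ plus the freedom to extend $p_0$'') does not address it, because the constraint is imposed below $\gamma_\alpha$ by $f$, not by the choice of value $p(b)$. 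Fortunately none of this is needed: every $b\in B(x')$ is by definition a branch of $A(x')$ forced to have an upper bound in $\dot A_{\gamma_{\alpha+1}}$, hence is a \emph{cofinal} branch of $A(x')$, and a union of cofinal downward closed branches is automatically a pruned, downward closed subtree --- given $s\in S'$, pick $b\in\dom p^H$ with $s\in b$ and read off a point of $b$ at any prescribed level. (Also note that the node of $A(x')_\zeta$ above $s$ need not be unique.) Replace your density argument for prunedness with this one-line observation and the proof is correct.
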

 
 \begin{proof}
Clearly, $S'$ is pruned and downward closed as a union of branches through $A(x')$.
 
 The fact that any $p\in H$ is compatible with $f$ implies that $b'\uhr \gamma_\alpha\subs S$ for $b'\in \dom p$ and so $S\subs S'\cap A(x)$. Note that if $z\in \mc T_x$ is compatible with $x'$ then for any $s\in S$, there are infinitely many branches $b\in B_z$ through $A(x)$ so that $s\in b$ and $f(b\uhr\xi)=i_z$ for almost all $\xi\in \eta_{\gamma_\alpha}$; let $B_s^*$ denote these branches. Any $b\in B_s^*$ has an upper bound in $A(x')_{\gamma_\alpha}$ which is extended to branches $b'\in B(x')$. So, some $b\in B_s^*$ has an extension in $S'$ by genericity and so  $S=S'\cap A(x)$.


 \end{proof}

 Now, we use $p^H$ to form an $f'\supseteq f$ defined on $S'$ that satisfies the (RP).
 
 \begin{clm} For any $\psi$ compatible with $f$, there is an $f':S'\to 2$  so that 
 \begin{enumerate}
 \item $f\subseteq f'\in N_{\alpha+2}$ and $f'$ uniformizes $h(x')$ on $\underline \eta\uhr \gamma_{\alpha+1}$,
 \item $\psi \subs f'$, and
 \item\label{item:rp} $f'(b\uhr\xi) =p^H(b)$ for all $b\in \dom p^H$, and almost all $\xi \in \eta_{\gamma_{\alpha+1}}$. 
 \end{enumerate}
 \end{clm}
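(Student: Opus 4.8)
The statement asks us to build a uniformization $f' \colon S' \to 2$ extending $f$, which simultaneously respects a prescribed finite partial function $\psi$ (compatible with $f$) and which, on each branch $b \in \dom p^H$, eventually takes the constant value $p^H(b)$ at the ladder points $\eta_{\gamma_{\alpha+1}}$. The plan is to \emph{force}, essentially replaying the argument of Lemma \ref{lm:ctblunif} but over the countable tree $S'$ rather than a full $A(x')$, while keeping track of the extra prescriptions coming from $p^H$ and $\psi$. The key observation making this possible is that the levels below $\gamma_{\alpha+1}$ are limit-free from the point of view of $f$ (i.e.\ $f$ already uniformizes $h(x)$ on $\underline{\eta}\uhr\gamma_\alpha$), so the only \emph{new} limit level where a constraint must be met is $\gamma_{\alpha+1}$ itself (and the intermediate limits $\delta$ with $\gamma_\alpha < \delta < \gamma_{\alpha+1}$, for which we simply invoke Lemma \ref{lm:ctblunif} freely).

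\smallskip

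\textbf{Step 1: Set up the approximation poset.} Let $\mc Q$ be the poset of finite approximations: conditions are functions $g$ with $g \colon a^\downarrow \to 2$ for some finite $a \subseteq S'$, such that $g$ extends $f$ on $a^\downarrow \cap A(x)$, $g$ agrees with $\psi$ wherever both are defined, $g$ respects the eventual-value constraint of $p^H$ in the weak finitary sense (it never commits to a value contradicting $p^H(b)$ cofinally along any $b \in \dom p^H$ -- formally, one just asks $g$ to be a uniformization of $h(x')$ on $\underline{\eta}\uhr\gamma_{\alpha+1}$ that is consistent with the already-decided finite data), ordered by containment. By the ordinal case inside Lemma \ref{lm:ctblunif} (applied level-by-level between consecutive limits), the sets $D_t = \{g \in \mc Q : t \in \dom g\}$ are dense for each $t \in S'$; the point is that at a limit level $\delta \in (\gamma_\alpha,\gamma_{\alpha+1}]$ one has complete freedom to choose the values along the $\omega$-cofinal $\eta_\delta$ so as to stabilize to $h(x')(\delta)$, since only finitely many values below are fixed.

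\smallskip

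\textbf{Step 2: Handle the $p^H$-constraint at level $\gamma_{\alpha+1}$.} This is the step I expect to be the main obstacle, and it is really a bookkeeping argument rather than a hard one. For each $b \in \dom p^H$ we need $f'(b\uhr\xi) = p^H(b)$ for almost all $\xi \in \eta_{\gamma_{\alpha+1}}$. Since $\dom p^H$ is finite, and since $x'$-compatibility of $p^H$ with $f$ already guarantees that $b\uhr\gamma_\alpha \subseteq \dom f$ with $f$ stabilized to the correct value $h(x')(\gamma_\alpha)$ below, the only obstruction could be a \emph{collision}: two branches $b_0 \neq b_1 \in \dom p^H$ with $p^H(b_0) \neq p^H(b_1)$ that share infinitely many nodes of the form $(\text{common node})\uhr\xi$ with $\xi \in \eta_{\gamma_{\alpha+1}}$. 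But $b_0 \neq b_1$ as cofinal branches through $A(x')$ means they split at some level below $\gamma_{\alpha+1}$; above the splitting level their $\xi$-restrictions are distinct nodes, so the constraints on $b_0$ and $b_1$ concern \emph{disjoint} sets of nodes for all large enough $\xi$, and there is no collision. (For $\xi$ below the splitting level, the nodes agree, but only finitely many such $\xi$ occur, and the "almost all" in the conclusion absorbs them.) Thus the finitely many eventual-value requirements from $p^H$ can all be built into a single dense-set requirement: the sets $E_{b,m} = \{g \in \mc Q : \text{some } \xi \in \eta_{\gamma_{\alpha+1}} \cap [m,\gamma_{\alpha+1}), \ b\uhr\xi \in \dom g,\ g(b\uhr\xi)=p^H(b)\}$ are dense for each $b \in \dom p^H$ and $m < \omega$, and if $\dom g$ already contains $b\uhr\xi$ for cofinally many $\xi$ (which it must once $b \subseteq \dom g$), density of the $D_t$'s combined with the freedom of Step 1 lets us always extend correctly.

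\smallskip

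\textbf{Step 3: Take a generic filter and conclude.} All the dense sets $D_t$ ($t \in S'$), $E_{b,m}$ ($b \in \dom p^H$, $m<\omega$), together with $\psi$-consistency which is already coded into $\mc Q$, form a countable family, and moreover the whole poset $\mc Q$ and this family lie in $N_{\alpha+2}$ (since $S'$, $p^H$, $\psi$, $f$, and the relevant parameters do, by the choice of $N_{\alpha+2} \ni H$). So we may take $G \in \gen{N_{\alpha+2}}{\mc Q}$ meeting all of them -- in fact take the $L$-least such $G$ so that $f' = \bigcup G$ lands in $N_{\alpha+2}$ -- and set $f' = \bigcup G$. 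Then $\dom f' = S'$ by the $D_t$'s, $f' \supseteq f$ and $f' \supseteq \psi$ by the definition of $\mc Q$, $f'$ uniformizes $h(x')$ on $\underline{\eta}\uhr\gamma_{\alpha+1}$ by the freedom at intermediate limits, and $f'(b\uhr\xi) = p^H(b)$ for almost all $\xi \in \eta_{\gamma_{\alpha+1}}$ for each $b \in \dom p^H$ by the $E_{b,m}$'s together with the no-collision observation of Step 2. This gives all of (1), (2), (3).
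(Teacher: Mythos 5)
Your argument is correct and is essentially the paper's: the paper disposes of this claim with a one-line appeal to Lemma \ref{lm:ctblunif}, and what you have written is precisely the unwinding of that lemma's finite-approximation density argument, together with the (correct) observations that the constraint at level $\gamma_\alpha$ is already discharged by $x'$-compatibility of $p^H$ with $f$, and that the finitely many eventual-value requirements along $\eta_{\gamma_{\alpha+1}}$ collide with each other and with the intermediate-limit requirements only finitely often. One small repair in Step 3: a filter in $\gen{N_{\alpha+2}}{\mc Q}$ cannot itself be an element of $N_{\alpha+2}$ (for an atomless poset, $\{q\in\mc Q: q\notin G\}$ is a dense set definable from $G$, so genericity over a model containing $G$ fails), so you should instead run Rasiowa--Sikorski \emph{inside} $N_{\alpha+2}$ against your explicitly listed countable family of dense sets $D_t$ and $E_{b,m}$ (or, matching the paper's pattern elsewhere, take $G\in \gen{N_{\alpha+1}}{\mc Q}\cap N_{\alpha+2}$); the $L$-least such filter then lies in $N_{\alpha+2}$ and yields $f'\in N_{\alpha+2}$ as required by (1).
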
 
 \begin{proof}
This is simply done by Lemma \ref{lm:ctblunif}. 
  \end{proof}
  
 \begin{clm}
 $f'$ satisfies the (RP).
 \end{clm}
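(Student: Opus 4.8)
The plan is to extract (RP) for $(x',f')$ from the genericity of $H$, using property~\ref{item:rp} of $f'$ together with the identity $\dom f'=S'=\bigcup\dom p^H$. Fix $z'\in\mc T_{x'}$ and $t\in S'$. It is enough to produce a branch $b\in\dom p^H\cap B_{z'}$ with $t\in b$ and $p^H(b)=i_{z'}$: then $b\subseteq\bigcup\dom p^H=S'$, and $f'(b\uhr\xi)=p^H(b)=i_{z'}$ for almost all $\xi\in\eta_{\gamma_{\alpha+1}}$ by~\ref{item:rp}, which is exactly what (RP) demands. A branch outside $\dom p^H$ is of no use here, since $f'$ is controlled near level $\gamma_{\alpha+1}$ only along branches of $\dom p^H$.

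Thus I run a density argument inside $\mc P_{x',f}$. For $t\in A(x')$ and $z'\in\mc T_{x'}$, let $D_{t,z'}$ consist of those $p\in\mc P_{x',f}$ having some $b\in\dom p\cap B_{z'}$ with $t\in b$ and $p(b)=i_{z'}$. Since $t\in S'=\bigcup\dom p^H$, fix $p_t\in H$ with $t\in\bigcup\dom p_t$; both $p_t$ and $D_{t,z'}$ lie in $N_{\alpha+1}$ by the choice of the $N_\alpha$'s, so it suffices to show $D_{t,z'}$ is dense below $p_t$: then $H$, meeting the dense set $D_{t,z'}\cup\{r:r\perp p_t\}\in N_{\alpha+1}$ and containing $p_t$, must meet $D_{t,z'}$ itself, and the witnessing branch lies in $\dom p^H$ with the right value. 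To check density below $p_t$, take any $q\supseteq p_t$ in $\mc P_{x',f}$; if $q\in D_{t,z'}$ we are done, and otherwise it is enough to find \emph{infinitely many} branches $b\in B_{z'}$ passing through $t$ that are $x'$-compatible with $f$, since then one of them avoids the finite set $\dom q$ and $q\cup\{(b,i_{z'})\}\in D_{t,z'}$ extends $q$.

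The supply of such branches is obtained in two cases, and here the inductive hypotheses enter. If $\htt(t)\geq\gamma_\alpha$, then the branch $b'\in\dom p^H$ through $t$ is $x'$-compatible with $f$ (every branch occurring in a condition is), so $b'\uhr\gamma_\alpha\subseteq S=\dom f$ and $f$ is eventually equal to $h(x')(\gamma_\alpha)$ along it; every $B_{z'}$-branch through $t$ agrees with $b'$ below level $\gamma_\alpha$, hence is itself $x'$-compatible with $f$, and there are infinitely many of them since (taking $\dot A$ normal) it is forced that each node of $A(x')$ has infinitely many extensions at the limit level $\gamma_{\alpha+1}$. If $\htt(t)<\gamma_\alpha$, let $z\in\mc T_x$ be the unique node compatible with $x'$, so $i_z=h(x')(\gamma_\alpha)$; the inductive (RP) for $(x,f)$ yields infinitely many branches $b_0\in B_z$ with $t\in b_0\subseteq S$ and $f(b_0\uhr\xi)=i_z$ for almost all $\xi\in\eta_{\gamma_\alpha}$, each of which is $x'$-compatible with $f$ and has a unique upper bound $s^*\in A(x')_{\gamma_\alpha}$; through each $s^*$, the relation $\bigcup B_{z'}=A(x')$ provides a branch $b^*\in B_{z'}$ with $b^*\uhr\gamma_\alpha=b_0$, so $b^*$ is $x'$-compatible with $f$ and passes through $t$, and distinct $b_0$'s yield distinct upper bounds $s^*$ and hence distinct branches $b^*$.

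The step I expect to require the most care is precisely this construction of branches: they must be $x'$-compatible with $f$ (so as to be legal in $\mc P_{x',f}$) while still threading the prescribed node $t$ and being plentiful enough to evade an arbitrary finite forbidden set. The case split handles both constraints, invoking (RP) for $(x,f)$ below level $\gamma_\alpha$ and the normality of $\dot A$ — or a harmless prior normalization, which does not disturb the hypotheses of the Main Theorem — at and above level $\gamma_\alpha$.
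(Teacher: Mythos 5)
Your proof is correct and follows the same route as the paper's: you reduce (RP) to finding, for each $s\in S'$ and $z'\in\mc T_{x'}$, a branch $b\in\dom p^H\cap B_{z'}$ through $s$ with $p^H(b)=i_{z'}$, and obtain this from the genericity of $H$ via a dense set lying in $N_{\alpha+1}$. Your additional case analysis checking that infinitely many branches of $B_{z'}$ through $s$ are actually $x'$-compatible with $f$ (so that they may legally be added to conditions of $\mc P_{x',f}$) fills in a step the paper leaves implicit.
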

  \begin{proof}
   We again use the genericity of $H$: given $z'\in \mc T_{x'}$, we need to show that for any $s\in S'$ there is some $b\in B_{z'}$ extending $s$ so that $f'(b\uhr\xi)=i_{z'}$ for almost all $\xi\in \eta_{\gamma_{\alpha+1}}$. It suffices to show, by condition (\ref{item:rp}) of $f'$, that there is some $b\in \dom p^H\cap B_{z'}$ extending $s$ so that $p^H(b)=i_{z'}$. By genericity of $H$ (and since $A(x'),\mc T_{x'}\in N_{\alpha+1}$), it suffice that there are infinitely many $b\in B_{z'}$ that extend $s$; however, this clearly holds since $\force_T\dot A$ is pruned and $B_{z'}$ just collects the branches in $A(x')$ that are forced to be bounded in $\dot A$.
  \end{proof}

 
 Now, we put $(x',f')\in \tilde T_{\alpha+1}$. The function $f'$ depended on the initial choice of $p_0\in \mc P_{x',f}$ and on $\psi$, and we do this for all countably many possible choices. This  in turn defines $\tilde T_{\alpha+1}$ (in $N_{\alpha+2}$) in a way that the (EP) is preserved.
 
 \subsection*{Limit steps - $\tilde T_{\beta}$ from $\tilde T_{<\beta}$} Suppose that $\tilde T_{<\beta}=\bigcup\{\tilde T_\alpha:\alpha<\beta\}$ is already constructed, and fix some $x'\in T_{\gamma_\beta}$. We will now force with the poset $\mc P_{x'}$ of all pairs $(p,f)$ so that 
 \begin{enumerate}
  \item $(x,f)\in \tilde T_{<\beta}$ for some $x<x'$, 
  \item $p\in \Fn(B(x'),2)$ is $x'$-compatible with $f$, and 
    \item $b\uhr \htt(f)\neq b'\uhr \htt(f)$ for any $b\neq b'\in \dom p$.
 \end{enumerate}

  Extension is defined by $(p,f)\leq (\bar p,\bar f)$ if $p\subseteq \bar p$, $f\subseteq \bar f$ and for any $b\in \dom p$, 
 \begin{enumerate}
 \setcounter{enumi}{3}
  \item\label{item:ext} $\bar f(b\uhr \xi)=p(b)$ for any $\xi\in \eta_{\gamma_\beta}\cap \htt \bar f\setm \htt f$.
 \end{enumerate}

 
 Given some $(p_0,f_0)\in \mc P_{x'}$, we take a minimal  $H\in \gen{N_{\beta}}{\mc P_{x'}}\cap N_{\beta+1}$ with $(p_0,f_0)\in H$. Let $$f'=\bigcup\{f:(p,f)\in H\}$$ and $p^H=\bigcup\{p:(p,f)\in H\}$. Let $S'=\dom f'$.
 
 \begin{clm}
  $S'=\cup \dom p^H$ is a pruned, downward closed subtree of $A(x')$. Furthermore, for any $b\in \dom p^H$, $f'(b\uhr \xi)=p^H(i)$ for almost all  $\xi\in \eta_{\gamma_\beta}$.
 \end{clm}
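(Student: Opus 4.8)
The plan is to verify, in turn, that $S'=\dom f'$ is a downward closed subset of $A(x')$, that $\dom f'=\bigcup\dom p^H$, that this set is pruned, and the displayed ``furthermore''. Since $H$ is a directed filter, $f'$ and $p^H$ are functions, with $p^H\subseteq\Fn(B(x'),2)$. Each $(p,f)\in H$ has $\dom f$ a downward closed subtree of $A(x)=A(x')\uhr\htt(x)$, so $\dom f'$ is a downward closed subset of $A(x')$; and once $\dom f'=\bigcup\dom p^H$ is known, pruned-ness is immediate because $\dom p^H$ consists of cofinal branches of $A(x')$. So the real content is the equality $\dom f'=\bigcup\dom p^H$, from which the ``furthermore'' will also drop out.

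For $\bigcup\dom p^H\subseteq\dom f'$ I would first show that $D_\delta=\{(\bar p,\bar f)\in\mc P_{x'}:\htt(\bar f)>\delta\}$ is dense for each $\delta<\gamma_\beta$. Given $(q,g)\in\mc P_{x'}$ with $(y,g)\in\tilde T_{\alpha'}$ and $y<x'$, pick $\beta_1\in(\alpha',\beta)$ with $\gamma_{\beta_1}>\delta$, put $y_1=x'\uhr\gamma_{\beta_1}$, and apply the (EP) (at the level $\beta_1<\beta$, where it is available by induction) to $(y,g)$, $y<y_1$, the function $p'=\{(b\uhr\gamma_{\beta_1},h(x')(\gamma_{\beta_1})):b\in\dom q\}$ — which is $y_1$-compatible with $g$ because $q$ is $x'$-compatible with $g$ — and the finite function $\psi=\{(b\uhr\xi,q(b)):b\in\dom q,\ \xi\in\eta_{\gamma_\beta}\cap[\gamma_{\alpha'},\gamma_{\beta_1})\}$, whose domain lies in $\bigcup\dom p'$ since $\eta_{\gamma_\beta}\cap\gamma_{\beta_1}$ is finite. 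The resulting $\bar g$ makes $(q,\bar g)\in D_\delta$ a legal extension of $(q,g)$: clause~(\ref{item:ext}) of $\leq$ is exactly $\psi\subseteq\bar g$, while $x'$-compatibility of $q$ with $\bar g$ comes from clause~(1) of the (EP). Granting density of the $D_\delta$, fix $b\in\dom p^H$, say $b\in\dom p$ with $(p,f)\in H$; for any $\xi<\gamma_\beta$ choose $(\bar p,\bar f)\in H$ above $(p,f)$ with $\htt(\bar f)>\xi$, and since $b\in\dom\bar p$ and $\bar p$ is $x'$-compatible with $\bar f$ we get $b\uhr\xi\in\dom\bar f\subseteq\dom f'$. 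Hence $b\subseteq\dom f'$; and applying clause~(\ref{item:ext}) to these same $(\bar p,\bar f)$ and taking unions gives $f'(b\uhr\xi)=p(b)=p^H(b)$ for every $\xi\in\eta_{\gamma_\beta}$ with $\xi\geq\htt(f)$, i.e.\ for almost all $\xi\in\eta_{\gamma_\beta}$.

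For $\dom f'\subseteq\bigcup\dom p^H$: let $t\in\dom f$ with $(p,f)\in H$ and $(x,f)\in\tilde T_\alpha$, and let $z$ be the unique element of $\mc T_x$ compatible with $x'$, so $i_z=h(x')(\gamma_\alpha)$. By the (RP), $t$ lies on some $b_0\in B_z\subseteq B(x)$ with $b_0\subseteq\dom f$ and $f(b_0\uhr\xi)=i_z$ for almost all $\xi\in\eta_{\gamma_\alpha}$; using that $\force_T\dot A$ is pruned and that $z$ decides $\dot A_{\gamma_\alpha}=A(x')_{\gamma_\alpha}$, $b_0$ extends to a cofinal branch $b\in B(x')$ with $b\uhr\gamma_\alpha=b_0$, and then $b$ is $x'$-compatible with $f$. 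So, unless a branch of $\dom p$ already passes through $t$ (in which case we are done), $(p\cup\{(b,0)\},f)\in\mc P_{x'}$ extends $(p,f)$, which shows $\{(\bar p,\bar f):t\text{ lies on a branch in }\dom\bar p\}$ is dense below $(p,f)\in H$; by $N_\beta$-genericity $H$ meets it, placing a branch through $t$ into $\dom p^H$. This finishes the proof. The one delicate point is the density of the $D_\delta$: the finite $\psi$ handed to the (EP) records the obligations that $q$ imposes at the finitely many points of $\eta_{\gamma_\beta}$ below $\gamma_{\beta_1}$, and it is consistent with clause~(1) of the (EP) precisely because that clause only demands agreement for \emph{almost all} $\xi\in\eta_{\gamma_{\beta_1}}$.
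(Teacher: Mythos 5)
Your proof is correct and follows essentially the same route as the paper: the reverse inclusion $\dom f'\subseteq\bigcup\dom p^H$ via genericity and the abundance of $x'$-compatible branches through any node of $\dom f$ (supplied by the (RP)), and the ``furthermore'' directly from clause (4) of the extension relation. The one place you go beyond the paper is the explicit density of the sets $D_\delta$ via the (EP), which the paper leaves implicit but which is indeed needed both for prunedness of $S'$ and to get all of $b$ (rather than just $b\uhr\sup\{\htt(f):(p,f)\in H\}$) into $\dom f'$; this is a correct and worthwhile supplement, not a different argument.
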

 \begin{proof} First, $\dom f'\supseteq \cup \dom p^H$ holds since $b\uhr \htt(f)\subs \dom f$ for any $b\in \dom p$ and $(p,f)\in H$. To see the reverse inclusion, just note that for any $(p,f)\in \mc P_{x'}$ and $s\in \dom f$, there are infinitely many $b\in B(x')$ that are $x'$-compatible with $f$ and extend $s$. So, by genericity, we included some of these in $\dom p^H$.

  The latter statement is clear from the way we extend conditions in  $\mc P_{x'}$ (see condition (\ref{item:ext}) above).
 \end{proof}

  \begin{clm}
   $f'$ satisfies the (RP).
  \end{clm}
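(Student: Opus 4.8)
The plan is to run the same argument as in the successor step, using the genericity of $H$ together with the preceding claim.

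First I would record what is already available: $S'=\dom f'=\bigcup\dom p^H$, and for every $b\in\dom p^H$ one has $b\subseteq S'$ and $f'(b\uhr\xi)=p^H(b)$ for almost all $\xi\in\eta_{\gamma_\beta}$. Hence (RP) for $(x',f')$ reduces to the following statement: for every $z'\in\mc T_{x'}$ and every $t\in S'$ there is $b\in\dom p^H\cap B_{z'}$ with $t\le b$ and $p^H(b)=i_{z'}$ — for then $b\subseteq\dom f'$ and $f'(b\uhr\xi)=p^H(b)=i_{z'}$ for almost all $\xi\in\eta_{\gamma_\beta}$, which is exactly what (RP) requires at level $\gamma_\beta$.

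The core is a density argument inside $\mc P_{x'}$. Fix $z'\in\mc T_{x'}$. Since $A(x')\in N_\beta$ is countable, each node $v\in A(x')$ lies in $N_\beta$, so the set
$$D_{z',v}=\bigl\{(p,f)\in\mc P_{x'}:\htt(f)>\htt(v)\text{ and }(\exists b\in\dom p\cap B_{z'})\,(v\le b\text{ and }p(b)=i_{z'})\bigr\}$$
belongs to $N_\beta$. I would then check that every $D_{z',v}$ is dense in $\mc P_{x'}$: given $(p,f)\in\mc P_{x'}$, first apply the inductive (EP) below a suitable $\bar x$ with $x<\bar x<x'$ to pass to $(p,\bar f)\le(p,f)$ in $\mc P_{x'}$ with $\htt(\bar f)>\htt(v)$, noting that the finitely many branches in $\dom p$ stay pairwise distinct at the larger height $\htt(\bar f)$ because they were already distinct at $\htt(f)$; then, since $\force_T$``$\dot A$ is pruned'' and $B_{z'}$ consists exactly of the branches of $A(x')$ that $z'$ forces to be bounded in $\dot A_{\gamma_\beta}$, there are infinitely many $b\in B_{z'}$ extending $v$ that are $x'$-compatible with $\bar f$ (here the inductive (RP) for conditions of $\tilde T_{<\beta}$ guarantees that $v$ lies below an $x'$-compatible branch of $\bar f$ to begin with); choosing such a $b\notin\dom p$ whose restriction to $\htt(\bar f)$ avoids those of the branches in $\dom p$ and setting $\bigl(p\cup\{(b,i_{z'})\},\bar f\bigr)$ yields a condition in $D_{z',v}$ below $(p,f)$. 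As $H$ is $N_\beta$-generic it meets every $D_{z',v}$ for $v\in A(x')$; feeding in $v=t$ for the given $t\in S'=\bigcup\dom p^H\subseteq A(x')$ produces $(p,f)\in H$ and $b\in\dom p\cap B_{z'}$ with $t\le b$ and $p(b)=i_{z'}$, so $b\in\dom p^H$ and $p^H(b)=i_{z'}$, which is what we wanted.

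I expect the one genuinely delicate point to be the density of $D_{z',v}$: one has to enlarge $f$ just enough to fit a new branch while still respecting condition (3) of $\mc P_{x'}$ and the ``follow $p$ along $\eta_{\gamma_\beta}$'' clause in its extension relation — this is precisely where the inductive (EP) is invoked — and simultaneously produce a fresh $x'$-compatible branch $b\in B_{z'}$ through $v$ carrying the value $i_{z'}$, which rests on $\dot A$ being pruned together with the inductive (RP) already secured for $\tilde T_{<\beta}$. The remaining steps are bookkeeping carried over essentially verbatim from the successor case.
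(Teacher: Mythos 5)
Your proof is correct and follows the same route as the paper's: reduce (RP) to finding $b\in\dom p^H\cap B_{z'}$ above $t$ with $p^H(b)=i_{z'}$, and obtain such a $b$ by genericity of $H$ over $N_\beta$. You spell out the dense sets and their density verification (via the (EP) and the pruned-ness of $\dot A$), which the paper compresses into a single sentence, but the argument is the same.
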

 \begin{proof} 
  Let $z'\in \mc T_{x'}$, and we need to show that for any $s\in S'$ there is some $b\in B_{z'}$ extending $s$ so that $f(b\uhr\xi) =i_{z'}$ for almost all $\xi\in \eta_{\gamma_{\beta}}$. It suffices that there is some $b\in \dom p^H\cap B_{z'}$ extending $s$ so that $p^H(b)=i_{z'}$. By genericity of $H$ (and since $A(x'),\mc T_{x'}\in N_{\beta}$), we need that there are infinitely many $b\in B_{z'}$ that extend $s$; however, this clearly holds since $\force_T\dot A$ is pruned.
 \end{proof}

Now, we put $(x',f')\in \tilde T_\beta$, and we repeat this for all possible choices of $(p_0,f_0)\in \mc P_{x'}$ (again, we only have countably many such), which in turn defines $\tilde T_\beta$. 
 
  \begin{clm}
   The (EP) is preserved.
  \end{clm}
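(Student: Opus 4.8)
The plan is to reduce (EP) at level $\beta$ to (EP) at the levels $<\beta$ — which is the induction hypothesis — together with the genericity of the filters used to build $\tilde T_\beta$. So fix $\alpha<\beta$, a node $(x,f)\in\tilde T_\alpha$, some $x<x'\in T_{\gamma_\beta}$, a finite $p\in\Fn(B(x'),2)$ that is $x'$-compatible with $f$, and $\psi\in\Fn(A(x')\setm A(x),2)$. By the reduction recorded right after the statement of (EP) we may assume $\dom\psi\subs\bigcup\dom p$, so that every $t\in\dom\psi$ lies on some branch $b\in\dom p$.

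First, interpolate a level. Since $\dom p$ and $\dom\psi$ are finite and $\gamma_\beta$ is a limit, pick $\alpha'$ with $\alpha<\alpha'<\beta$ such that $\dom\psi\subs A(\bar x)$ and the truncations $b\uhr\gamma_{\alpha'}$ ($b\in\dom p$) are pairwise distinct, where $\bar x:=x'\uhr\gamma_{\alpha'}$; note $x<\bar x<x'$. Set $i^*:=h(x')(\gamma_{\alpha'})$. For each $b\in\dom p$ one has $b\uhr\gamma_{\alpha'}\in B(\bar x)$ — witnessed by the unique $z\in\mc T_{\bar x}$ lying below $x'$, which decides $\dot A_{\gamma_{\alpha'}}$ — so $p':=\{(b\uhr\gamma_{\alpha'},i^*):b\in\dom p\}$ is a well-defined element of $\Fn(B(\bar x),2)$, and it is $\bar x$-compatible with $f$ because $p$ is $x'$-compatible with $f$ and the $h$-values along the branch $x'$ cohere. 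Also $\psi\in\Fn(A(\bar x)\setm A(x),2)$.

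Now apply (EP) at level $\alpha'$ to the data $(x,f),\bar x,p',\psi$: this yields $f_0$ with $(x,f)\le(\bar x,f_0)\in\tilde T_{\alpha'}$ such that for every $b\in\dom p$ we have $b\uhr\gamma_{\alpha'}\subs\dom f_0$ and $f_0(b\uhr\xi)=i^*$ for almost all $\xi\in\eta_{\gamma_{\alpha'}}$, and such that $\psi$ agrees with $f_0$ wherever both are defined. Since $\dom\psi\subs\bigcup\dom p$ and every node of $\dom\psi$ has height $<\gamma_{\alpha'}$, the first clause gives $\dom\psi\subs\dom f_0$. The reason we gave all of $p'$ the single value $i^*=h(x')(\gamma_{\alpha'})$ is that now $p$ itself, with its original values, is $x'$-compatible with $f_0$: indeed $\htt(f_0)=\gamma_{\alpha'}$, $b\uhr\gamma_{\alpha'}\subs\dom f_0$, and $f_0(b\uhr\xi)=i^*=h(x')(\gamma_{\alpha'})$ for almost all $\xi\in\eta_{\gamma_{\alpha'}}$; together with the separation of the $b\uhr\gamma_{\alpha'}$'s this says exactly that $(p,f_0)\in\mc P_{x'}$ (and, as in the earlier cases, $(p,f_0)\in N_\beta$).

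Finally, feed $(p,f_0)$ to the limit construction: let $(x',f')\in\tilde T_\beta$ be the node obtained from the minimal $H\in\gen{N_\beta}{\mc P_{x'}}\cap N_{\beta+1}$ containing $(p,f_0)$. Then $f\subs f_0\subs f'$ and $x<x'$ give $(x,f)\le(x',f')$, and $p\subs p^H$ since $(p,f_0)\in H$. The two claims about the limit construction established just above now supply the rest: $\dom f'=\bigcup\dom p^H\supseteq\bigcup\dom p$, so each $b\in\dom p$ is contained in $\dom f'$ and $f'(b\uhr\xi)=p^H(b)=p(b)$ for almost all $\xi\in\eta_{\gamma_\beta}$, which is clause (1) of (EP); and for clause (2), $\dom\psi\subs\dom f_0\subs\dom f'$ with $\psi(t)=f_0(t)=f'(t)$ for every $t\in\dom\psi$. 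The only spot that needs care is the bookkeeping of the interpolation step — in particular realising that $p'$ must carry the uniform value $i^*$ rather than the values $p(b)$, those being imposed only at the top level $\gamma_\beta$, where they are delivered by the extension clause of $\mc P_{x'}$ together with genericity of $H$, so that $p$ persists as a legitimate condition of $\mc P_{x'}$ over the taller function $f_0$. No genuinely new idea beyond the successor step is required.
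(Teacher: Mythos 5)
Your proof is correct and follows the same route as the paper's (three-line) argument: first use the (EP) below $\beta$ to lift $(x,f)$ to an intermediate $(\bar x,f_0)$ over which $p$ and $\psi$ remain admissible, then feed $(p,f_0)$ into the generic filter construction of $\tilde T_\beta$ and read off clauses (1) and (2) from the two claims about $f'$ and $p^H$. Your bookkeeping of the interpolated condition $p'$ --- in particular that it must carry the uniform value $h(x')(\gamma_{\alpha'})$ so that $p$ itself stays $x'$-compatible with $f_0$ --- is exactly the detail the paper leaves implicit.
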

  \begin{proof} 
  Indeed, given some $(x,f)$, $p_0$ and $\psi$ we can first use the (EP) for $\tilde T_{<\beta}$ to find $(x,f)\leq (x_0,f_0)$ that is still compatible with $p_0$ and $\psi\subseteq f_0$. Now, take $f'$ that corresponds to the filter $H$ that we chose for $(p_0,f_0)$. Then $(x',f')$ witnesses the (EP).
   \end{proof}
  \medskip
  
  This finishes the construction of $\tilde T=\cup_{\alpha<\omg}\tilde T_\alpha$, which is  an $\aleph_1$-tree (with the coordinate wise ordering) and  certainly a refinement of $T$ by the (EP). Also, we proved already in Claim \ref{clm:unif} that, in $V[C]^{\tilde T}$, the colouring coded by $\dot h$ on $\underline \eta$ has an $\dot A$-uniformization.\footnote{At this point, using the $\diamondsuit^*$ sequence $W^*$ that was in the models $N_\alpha$, it would be standard to show that $\tilde T$ is Suslin (see \cite[Chapter IV, Lemma 2]{devlin2006souslin}). Hence, together with Jensen's iteration framework, we arrive to an alternative proof to Moore's result: the consistency of CH with no minimal uncountable linear orders other than $\pm \omg$.}
  

 \subsection*{Why is $R$ still full Suslin after forcing with $\tilde T$?} This will be the crux of the proof, where we simultaneously show that $\tilde T$ is Suslin, and that $\tilde T$ preserves $R$ full Suslin. 
 In order to do this, it suffices to prove that, in $V[C]$, $\tilde T\times R'$ is Suslin for any derived tree $R'$ of $R$. 
 
 If this were not the case, and we let $\dot R'$ denote a $V[C]$-generic branch for $R'$, then   $$V[C][\dot R']\models \tilde T\textmd{ is not Suslin}$$ for some $R'\in \partial R$. Let $\dot X$ be a name for a maximal antichain of $\tilde T$; as usual, we would like to find an $\alpha<\omg$ so that $\dot X\uhr \alpha=\dot X\cap T_{<\alpha}$ is maximal already in $\tilde T$ (and so $\dot X=\dot X\uhr \alpha$ is countable). In turn, we are in search for an $\alpha<\omg$ so  that any $(x',f')\in \tilde T_\alpha$ extends some element of $\dot X\uhr \alpha$.
 
 We need a few simple claims to prepare our argument.
 
 \begin{clm}
$V[C][\dot R']=V[\dot R'][C]$, and the club $C$ is also $V[\dot R']$-generic.
  \end{clm}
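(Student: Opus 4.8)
The plan is to show that $C$ and $\dot R'$ are mutually generic over $V$, so that the two-step iterations can be rearranged. First I would note that $\mc C$ is a countably closed forcing in $V$, and $R'$ is a Suslin tree in $V$ (being a derived tree of the full Suslin tree $R$), hence $R'$ has the countable chain condition. A standard fact about products of forcings (see \cite{kunen}) is that if $\mathbb{P}$ is $\sigma$-closed and $\mathbb{Q}$ is ccc, then forcing with $\mathbb{P}\times\mathbb{Q}$ is the same as forcing first with one and then the other; more precisely, a $V$-generic filter for $\mathbb{P}\times\mathbb{Q}$ decomposes as a pair $(G,H)$ where $G$ is $V$-generic for $\mathbb{P}$, $H$ is $V$-generic for $\mathbb{Q}$, and each remains generic over the extension by the other. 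So it suffices to observe that the $V[C]$-generic branch $\dot R'$ together with $C$ yields a $V$-generic filter for $\mc C\times R'$.

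Second, I would verify that this decomposition gives exactly the equality of models and the preservation of genericity claimed. Since $V[C][\dot R']$ is generated over $V$ by the pair $(C,\dot R')$, and this pair is $V$-generic for $\mc C\times R'$, commutativity of the product gives $V[C][\dot R']=V[\dot R'][C]$ as the same model. Moreover, by the product lemma, $\dot R'$ is $V$-generic for $R'$ (projecting to the second coordinate), and then $C$ is $V[\dot R']$-generic for $\mc C$ (as the first-coordinate generic over the extension by the second coordinate). That is precisely the assertion that $C$ is $V[\dot R']$-generic.

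The only mild subtlety — and the step I would watch most carefully — is to confirm that $R'$ genuinely has the ccc \emph{in $V$}: this is where the hypothesis that $R$ is full Suslin in $V$ is used, since by definition every derived tree $R'\in\partial R$ is then Suslin in $V$, in particular ccc. (Note we cannot invoke Suslinity of $R'$ in any larger model here; we only need it in the ground model $V$ to apply the product lemma.) Everything else is the routine rearrangement of a $\sigma$-closed $\times$ ccc product, so I would state the product lemma, check its two hypotheses, and read off both conclusions. No genuine obstacle is expected; the claim is a bookkeeping step setting up the mutual-genericity picture that the Suslinity argument for $\tilde T\times R'$ will exploit.
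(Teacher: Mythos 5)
Your reduction to the product lemma is sound as far as it goes: $R'$ is a ground-model partial order, so the iteration $\mc C\ast\check R'$ is literally the product $\mc C\times R'$, the pair $(C,\dot R')$ is $V$-generic for it, and commutativity of the two extensions together with mutual genericity follow. (Note, though, that the ``$\sigma$-closed times ccc'' package is not what is doing the work here; the plain product lemma for arbitrary posets already gives commutativity and mutual genericity. The ccc of $R'$ is needed, but for a different reason, explained below.) What the product lemma gives you is only that $C$ meets every dense subset of $\mc C^V$ lying in $V[\dot R']$, i.e.\ that $C$ is $V[\dot R']$-generic \emph{for the club forcing as computed in $V$}. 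The claim, as it is used later in the paper, requires genericity for $\mc C$ as computed in $V[\dot R']$ --- for instance, one later needs an end segment of $C\cap\alpha$ to lie inside clubs belonging to $\bar N[\dot x']$, which need not come from $V$ --- and the two posets are not literally equal, since $R'$ adds new clubs of $\omg$.

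Bridging this is exactly the content of the paper's proof, and it is the one step missing from yours: $\mc C^V$ is dense in $\mc C^{V[\dot R']}$. Both hypotheses you isolated are used, but not where you place them. Given a condition $(\nu,A')$ with $A'$ a club of $V[\dot R']$, the ccc of $R'$ yields a ground-model club $B\subseteq A'$, and the fact that $R'$ (being Suslin, hence $\omega$-distributive) adds no new $\omega$-sequences yields $A'\cap\nu\in V$; then $(\nu,(A'\cap\nu)\cup(B\setminus\nu))$ is a condition of $\mc C^V$ extending $(\nu,A')$. With this density in hand, your product-lemma argument does finish the claim. So the proposal is not wrong, but as written it establishes a weaker statement than the one needed; you should add the density argument.
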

  \begin{proof}
  
 Indeed, on one hand $R'$ is ccc so any club of $\omg$ in $V^{R'}$ contains  a club from $V$. Also, $R'$ introduces no new $\oo$-sequences, so the poset $\mc C$ is the same in $V$ and $V^{R'}$. 
\end{proof}
 
 Now, working in $V[\dot R']$, we take  countable elementary submodels $N\in M\prec H^{V[\dot R']}_{\oo_3}$ with $T,R,A,W^*,\mc C,\dot X\dots \in N$. Let $\pi_N,\pi_M$ denote the collapsing functions for $N$ and $M$, and let let $\bar N$ and $\bar M$ denote the transitive collapses of $N$ and $M$, respectively.  We will prove that  $\alpha=N\cap \omg$ satisfies our requirements.
 
 Let us cite two results \cite[Chapter IX, Lemma 2 and 3]{devlin2006souslin}:

 \begin{clm}
  $C\cap \alpha$ is $\bar M$-generic for the poset $\pi_N(\mc C)=\{(\nu,B\cap \alpha):(\nu, B)\in \mc C\cap N\}$.
 \end{clm}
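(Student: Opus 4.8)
The plan is to prove a standard genericity fact: a $V[\dot R']$-generic filter for the club forcing $\mc C$, when intersected with $N\cap \omg=\alpha$, is $\bar M$-generic for the image poset $\pi_N(\mc C)$. Observe first that $\mc C\cap N$ determines $\pi_N(\mc C)$: since conditions in $\mc C$ are pairs $(\nu,B)$ with $\nu<\omg$ and $B$ a club, and $\pi_N$ sends $\nu$ to itself (as $\nu\in N\cap\omg$ below $\alpha$) and sends a club $B\in N$ to $B\cap\alpha$ (because $N\cap\omg=\alpha$ and $N$ knows $B$ is unbounded below each of its elements), we get $\pi_N(\mc C)=\{(\nu,B\cap\alpha):(\nu,B)\in \mc C\cap N\}$ as stated. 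Moreover $\pi_N$ carries the ordering of $\mc C$ restricted to $N$ to the ordering of $\pi_N(\mc C)$, and since $N\in M$ we have $\pi_N(\mc C)\in \bar M$; indeed $\bar M$'s version of ``$\pi_N(\mc C)$'' is exactly this poset because $M$ correctly computes the collapse of its element $N$.

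Next I would take an arbitrary dense set $D\in\bar M$ of $\pi_N(\mc C)$ and pull it back through $\pi_M$: since $D\in\bar M=\pi_M[M]$, there is $\tilde D\in M$ with $\pi_M(\tilde D)=D$, and by elementarity $M$ believes $\tilde D$ is a dense subset of $\pi_N(\mc C)$ — but $N$, being an element of $M$, is known to $M$, so $M$ sees $\tilde D$ as a dense subset of the poset $\pi_N(\mc C)$ computed from $N$. The key move is now to lift $\tilde D$ to a genuine dense subset $D^*$ of $\mc C$ lying in $M$ (hence in $N$ by absoluteness of the definition from the parameter $N$, or simply: $D^*\in M$ suffices since $\mc C\in N\subseteq M$ and we only need $M$-density below). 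Concretely, using the countable closure of $\mc C$ and the fact (already recorded in the excerpt) that the generic club is contained mod countable in any ground-model club, one shows that the set $D^*$ of conditions $(\nu,B)\in\mc C$ whose ``restriction to $\alpha$'', $(\nu,B\cap\alpha)$, lies in $\tilde D$ is dense in $\mc C$: given any $(\nu,B)$, first extend inside $N$ to meet the dense set named in $N$ that reflects $\tilde D$, then continue below $\alpha$. Then $C$, being $V[\dot R']$-generic and with $D^*\in N\subseteq V[\dot R']$, meets $D^*$, so $C\cap\alpha$ meets $D$. Since $D\in\bar M$ was arbitrary, $C\cap\alpha$ is $\bar M$-generic for $\pi_N(\mc C)$.

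The main obstacle is getting the two elementary submodels and their collapses to talk to each other correctly: one must be careful that ``$\pi_N(\mc C)$'' as computed inside $\bar M$ agrees with the externally-defined poset $\{(\nu,B\cap\alpha):(\nu,B)\in\mc C\cap N\}$, which relies on $N\in M$, on $M$ correctly computing transitive collapses of its elements, and on the fact that $\alpha=N\cap\omg<M\cap\omg$ so that $M$ sees $N$ as genuinely countable. The rest — lifting a dense set from $\tilde D\in M$ back up to a dense subset of $\mc C$ and invoking genericity of $C$ — is a routine reflection argument of exactly the kind used in Jensen's construction, and I would simply cite \cite[Chapter IX, Lemma 2]{devlin2006souslin} for the precise bookkeeping, as the excerpt already signals. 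One should also note in passing that $C$ is $V[\dot R']$-generic for $\mc C$, which was established in the preceding claim, so that there is no mismatch between the universe over which genericity is asserted and the universe containing the relevant dense sets.
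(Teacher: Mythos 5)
The paper does not actually prove this claim: it is stated as one of two results quoted verbatim from Devlin--Johnsbraten \cite[Chapter IX, Lemmas 2 and 3]{devlin2006souslin}, so your closing instinct to ``simply cite'' is exactly what the author does. The problem is that the argument you sketch before falling back on the citation contains a genuine gap, and it sits at the one step that makes the lemma non-trivial. You claim that the set $D^*$ of conditions $(\nu,B)\in\mc C$ whose trace $(\nu,B\cap\alpha)$ lies in $\tilde D$ is dense in $\mc C$, to be met ``by first extending inside $N$.'' Neither half of this works. First, for $(\nu,B\cap\alpha)$ even to be an element of $\pi_N(\mc C)$ one needs $\nu<\alpha$ and $B\cap\alpha=B'\cap\alpha$ for some club $B'\in N$; but if $(\nu_0,B_0)$ is a condition whose working part $B_0\cap\nu_0$ is not of the form $B'\cap\nu_0$ for any club $B'\in N$, then \emph{no} extension of $(\nu_0,B_0)$ can have this property, since extensions must preserve the working part below $\nu_0$ and $N$ contains only countably many clubs while there are continuum many possible working parts. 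Such ``unmatchable'' conditions are themselves dense, so $D^*$ is not dense and one cannot ``extend inside $N$'' from an arbitrary condition. Second, $\tilde D$ lives in $M$, typically not in $N$, so there is no ``dense set named in $N$ that reflects $\tilde D$'' to aim for.

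This obstruction is precisely why the lemma requires the careful bookkeeping of \cite{devlin2006souslin} (where the elementary submodels are set up so that the relevant initial segments of the generic club are captured by the collapsed structures) rather than the routine pull-back-a-dense-set argument you propose. Your first paragraph --- the identification of $\pi_N(\mc C)$, the fact that $\pi_N(\mc C)\in\bar M$ and that $\bar M$ computes it correctly from $\pi_M(N)$, and the observation that $\pi_M$ fixes dense subsets of $\pi_N(\mc C)$ pointwise --- is fine and is the easy part. But as written, the genericity argument does not go through, and the claim cannot be salvaged by a naive density argument over $\mc C$; you should either reproduce the actual proof from \cite[Chapter IX]{devlin2006souslin} or, like the paper, cite it without attempting a shortcut.
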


  \begin{clm} For any formula $\varphi$ with constants from $\{\check x:x\in N\}\cup \{C\}$,
    $$H_{\oo_3}^{V[\dot R'][ C]}\models \varphi  \textmd{ if and only if }   \bar N[C\cap \alpha]\models \pi_N(\varphi).$$
  
In turn, $\pi^{-1}_N$ extends to an elementary embedding $$\pi^{-1}_N:\bar N[C\cap \alpha]\to H_{\oo_3}^{V[\dot R'][ C]}$$ that maps $ C\cap \alpha$ to $C$.
  \end{clm}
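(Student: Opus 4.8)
The plan is to run the by-now-standard lifting of a collapse embedding through a small forcing, feeding in the two preceding Claims. Since $\mc C$ is small relative to $\oo_3$, forcing with it is internal to $H_{\oo_3}$: we have $H_{\oo_3}^{V[\dot R'][C]}=\bigl(H_{\oo_3}^{V[\dot R']}\bigr)[C]$, and the relation ``$p\force_{\mc C}\psi(\dot a_1,\dots,\dot a_k)$'', for $\mc C$-names $\dot a_i\in H_{\oo_3}^{V[\dot R']}$, is definable over $H_{\oo_3}^{V[\dot R']}$. By the preceding Claim, $C\cap\alpha$ is $\bar N$-generic for $\bar{\mc C}:=\pi_N(\mc C)$ (this follows a fortiori from $\bar M$-genericity, since $\bar N\in\bar M$ and every dense subset of $\bar{\mc C}$ lying in $\bar N$ lies in $\bar M$); write $\bar G$ for the $\bar N$-generic filter on $\bar{\mc C}$ determined by $C\cap\alpha$, so $\bar N[C\cap\alpha]=\bar N[\bar G]$. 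A bookkeeping check special to the club forcing identifies $\bar G$ with $\pi_N[C\cap N]$: if $(\nu,B)\in\mc C\cap N$ belongs to the filter given by $C$, then genericity of $C$ forces $C\setminus\nu\subseteq B$, hence $(C\cap\alpha)\setminus\nu\subseteq B\cap\alpha$, so $\pi_N((\nu,B))=(\nu,B\cap\alpha)\in\bar G$; the converse is similar.

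With this in hand I would define the transfer map. As $\pi_N\colon N\to\bar N$ is an isomorphism with $\mc C\in N$, it carries $\mc C$-names in $N$ onto \emph{all} $\bar{\mc C}$-names in $\bar N$, so every element of $\bar N[\bar G]$ has the form $\pi_N(\dot a)^{\bar G}$ for some $\mc C$-name $\dot a\in N$. Set $\pi_N^{-1}\bigl(\pi_N(\dot a)^{\bar G}\bigr)=\dot a^{C}$. The displayed biconditional is precisely what shows this to be well defined (take $\varphi$ to be ``$v_0=v_1$'') and elementary (general $\varphi$), and it sends $C\cap\alpha$ to $C$ because $C\cap\alpha$ is the $\bar G$-value of $\pi_N$ applied to the canonical name for the generic club.

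The core is then the chain of equivalences, for a formula $\varphi$ and $\mc C$-names $\dot a_1,\dots,\dot a_k\in N$:
\begin{align*}
\bar N[\bar G]\models\pi_N(\varphi)\bigl(\pi_N(\dot a_1)^{\bar G},\dots\bigr)
&\iff \exists\,\bar p\in\bar G\ \bigl(\bar N\models\bar p\force_{\bar{\mc C}}\pi_N(\varphi)(\pi_N(\dot a_1),\dots)\bigr)\\
&\iff \exists\,p\in C\cap N\ \bigl(p\force_{\mc C}\varphi(\dot a_1,\dots)\bigr)\\
&\iff H_{\oo_3}^{V[\dot R'][C]}\models\varphi(\dot a_1^{C},\dots).
\end{align*}
Here the first step is the forcing theorem inside $\bar N$ (a model of enough of $\mathrm{ZF}^-$); the second combines elementarity of $\pi_N$, the identity $\bar G=\pi_N[C\cap N]$, the internal definability of the $\mc C$-forcing relation, and $N\prec H_{\oo_3}^{V[\dot R']}$ (so that $\force_{\mc C}$ as computed in $N$ agrees with the real one); the third is the forcing theorem for the small forcing $\mc C$ over $H_{\oo_3}^{V[\dot R']}$ together with the genericity of $C$ over $V[\dot R']$ from the earlier Claim, where for the nontrivial direction one must also pull a witnessing condition into $N$ — apply the $\bar N$-genericity of $C\cap\alpha$ to the (parameter-definable from $\dot a_1,\dots$, hence lying in $N$) dense set of conditions deciding $\varphi(\dot a_1,\dots)$, and use that $C$ is a filter so the resulting $q\in C\cap N$ cannot force $\neg\varphi$.

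The only real friction I anticipate is at the two places where one must move between $C$ and its restriction: verifying $\bar G=\pi_N[C\cap N]$ for this specific poset, and the ``witness inside $N$'' step just described; everything else is the routine small-forcing lifting lemma, and indeed the statement is exactly \cite[Chapter IX, Lemmas 2 and 3]{devlin2006souslin}.
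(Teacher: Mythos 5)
Your argument is correct and is exactly the intended one: the paper does not prove this claim at all but cites it as \cite[Chapter IX, Lemmas 2 and 3]{devlin2006souslin}, and your proposal is a faithful reconstruction of that standard small-forcing lifting argument (identify $\bar G$ with $\pi_N[C\cap N]$ using the $\bar M$-genericity from the preceding Claim, then transfer truth through the internally definable forcing relation, pulling deciding conditions into $N$ via genericity). The two friction points you flag are indeed the only places where anything specific to the club forcing enters, and your treatment of both is sound.
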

 
Recall that in the construction of $\tilde T$, we worked with the canonical model sequence $(N_\beta)_{\beta<\omg}$, defining the levels of $\tilde T$ by choosing minimal generic filters for certain posets. 
  Since $\bar N$ contains the relevant parameters, we can carry out the same construction in  $\bar N[C\cap \alpha]$, and  hence, $\bar N[C\cap \alpha]$ contains  the tree $\tilde T\uhr \alpha$.  
  

 \begin{clm}\label{clm:guess}
 There is a club $D\subs \omg$ so that $D\in \ran \pi_N^{-1}$ and $\beta\in D$ implies $\dot X\uhr \beta\in N_\beta$. In turn, the set $\{\beta\leq \alpha:\dot X\uhr \beta \in N_\beta\}$ is closed and unbounded in $\alpha+1$ and so $\dot X\uhr \alpha\in N_\alpha$.
 
   \end{clm}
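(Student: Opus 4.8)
The plan is to produce the club $D$ by a standard $\diamondsuit^*$-reflection argument, using the fact that $W^*$ was arranged to remain a $\diamondsuit^*$-sequence after forcing with $R'$, and then push $D$ down into $\ran\pi_N^{-1}$ by elementarity. First I would work in $V[\dot R'][C]$ and recall that $\dot X$ is (a name for) a subset of $\omg$-much information, but more usefully, that the map $\beta\mapsto \dot X\uhr\beta$ is definable from $\dot X$, $T$, $A$ and the generic data. The point of the canonical models $N_\beta$ is exactly that $N_\beta=L_\delta[T\uhr\gamma_\beta+1, A\uhr\gamma_\beta+1, R\uhr\beta+1, C\cap\gamma_\beta, W^*\uhr\gamma_\beta+1]$ for the minimal $\delta$ making this a model of $ZF^-$; so $\dot X\uhr\beta\in N_\beta$ will hold as soon as $\dot X\uhr\beta$ is coded (as a subset of, say, $\gamma_\beta$) by an element of $W^*_{\gamma_\beta}$, since $W^*\uhr\gamma_\beta+1$ is a parameter of $N_\beta$ and $N_\beta$ is closed under the relevant definitions. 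By the $\diamondsuit^*$ property of $W^*$ (which survives forcing with $R'$, hence holds in $V[\dot R'][C]$ as well since $W^*$ was chosen this way and $C$ adds no reals), the set $E=\{\beta<\omg: \dot X\uhr\beta \textmd{ is guessed by }W^*_{\gamma_\beta}\}$ contains a club, and on this club we get $\dot X\uhr\beta\in N_\beta$.

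Next I would arrange that such a club lies in $\ran\pi_N^{-1}$. Since $\dot X, T, A, R, W^*, \mc C\in N$ and $N\prec H^{V[\dot R']}_{\oo_3}$, and since the statement ``there is a club $D$ such that $\beta\in D\to \dot X\uhr\beta\in N_\beta$'' is expressible over $H^{V[\dot R'][C]}_{\oo_3}$ using constants from $\{\check x:x\in N\}\cup\{C\}$ (the map $\beta\mapsto N_\beta$ is definable from the listed parameters together with $C$), I can apply the previous claim: $H^{V[\dot R'][C]}_{\oo_3}\models\varphi$ iff $\bar N[C\cap\alpha]\models\pi_N(\varphi)$. So $\bar N[C\cap\alpha]$ believes there is such a club $\bar D$; let $D=\pi_N^{-1}(\bar D)$, which is then genuinely a club with $\beta\in D\Rightarrow \dot X\uhr\beta\in N_\beta$, and $D\in\ran\pi_N^{-1}$ by construction.

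Finally, to get the ``in turn'' conclusion: $D$ is a ground-model (i.e. $V[\dot R'][C]$) club, and $\alpha=N\cap\omg$. Since $D\in\ran\pi_N^{-1}$ means $\bar D=\pi_N(D)\in\bar N$, and $\bar N\models\bar D$ is unbounded in $\omg^{\bar N}=\alpha$, the set $D\cap\alpha$ is unbounded in $\alpha$; and $D$ is closed, so $\alpha\in D$ as well (as $\alpha$ is a limit of points of $D$). Hence $\{\beta\le\alpha:\dot X\uhr\beta\in N_\beta\}\supseteq D\cap(\alpha+1)$ is closed and unbounded in $\alpha+1$, and in particular, taking $\beta=\alpha$, we get $\dot X\uhr\alpha\in N_\alpha$. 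The main obstacle I expect is bookkeeping the definability: one must check carefully that $\beta\mapsto N_\beta$ (equivalently, the sequence $(\gamma_\alpha)_{\alpha<\omg}$ enumerating the end-segment of $C$ inside $F$, and the $L$-hierarchy construction) is definable over $H_{\oo_3}$ from the constants available in $N$ together with $C$, so that the reflection through $\pi_N$ genuinely applies; this is where the precise choice of parameters in the definition of $N_\alpha$ and the preservation of $\diamondsuit^*$ by $R'$ both get used.
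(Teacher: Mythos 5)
Your proposal is correct and follows essentially the same route as the paper: apply the $\diamondsuit^*$ property of $W^*$ in $V[\dot R'][C]$ to get a club on which $\dot X\uhr\beta$ is guessed (hence lands in $N_\beta$), reflect this club into $\bar N[C\cap\alpha]$ via the elementary embedding $\pi_N^{-1}$, and conclude $\alpha\in D$ by closedness so that $\dot X\uhr\alpha\in N_\alpha$. The extra definability bookkeeping you flag is handled implicitly in the paper by the choice of parameters in $N_\beta$, exactly as you describe.
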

\begin{proof} 

   Recall that $W^*$ was a $\diamondsuit^*$ sequence in $V[\dot R'][ C]$, so there is a club $D$ in $\omg$ so that $\beta \in D$ implies $\dot X\uhr \beta\in W^*_\beta\subseteq N_\beta$. By elementarity, there is $E\in \bar N[C\cap \alpha]$ so that $D=\pi^{-1}_N(E)$ satisfies the above requirements. Since $D\cap \alpha=E$ and $D$ was closed unbounded, the claim follows.


 
\end{proof}

By elementarity, $\dot X\uhr \alpha$ is a maximal antichain in $\tilde T\uhr \alpha$.

\medskip

We will also need the next claim, where, given $x'\in T_\alpha$, we let $\dot x'$ denote the branch $\{x\in T:x<x'\}$ of $T\uhr \alpha$.

   \begin{clm}
     For any $x'\in T_\alpha$, $\bar N[ C\cap \alpha][\dot x']\models$ ``$\dot A\uhr \alpha$ is Aronszajn''.
   \end{clm}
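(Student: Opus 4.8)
The plan is to transfer the hypothesis on $\dot A$ down through the elementary embedding $\pi_N^{-1}$, and then use the product lemma to split off the generic branch $\dot x'$. First I would recall our standing assumption (3) of the Main Theorem: for every $R'\in\partial R$ we have $V^{T\times R'}\models$``$\dot A$ is Aronszajn''. Since $\dot R'$ is a $V$-generic branch for the fixed derived tree $R'$ under consideration, this says exactly that $V[\dot R']^{T}\models$``$\dot A$ is Aronszajn'', and as $C$ adds no new $\omega$-sequences and $T$ is ccc, the same holds in $V[\dot R'][C]^T$, i.e. $V[\dot R'][C]\models$``$\force_T\dot A$ is Aronszajn''. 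Because $T\in N$ and the parameters defining $\dot A$ lie in $N$, this statement — suitably formalized in $H_{\oo_3}^{V[\dot R'][C]}$ — reflects via the previous claim (the elementarity of $\pi_N^{-1}:\bar N[C\cap\alpha]\to H_{\oo_3}^{V[\dot R'][C]}$) down to $\bar N[C\cap\alpha]$. So $\bar N[C\cap\alpha]\models$``$\force_{\pi_N(T)}\pi_N(\dot A)$ is Aronszajn'', and note $\pi_N(T)=T\uhr\alpha$ and $\pi_N(\dot A)$ is a $T\uhr\alpha$-name whose evaluation along any branch is an initial segment of $\dot A$ of height $\alpha$ — in particular, forced to be $\dot A\uhr\alpha$ along the generic branch.

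Next I would pass from the forcing statement to a statement about the specific branch $\dot x'$. The point is that $\dot x'=\{x\in T:x<x'\}$ is a branch of $T\uhr\alpha$ of length $\alpha=N\cap\omg$; since $T$ is Suslin, $\dot x'$ meets every dense subset of $T\uhr\alpha$ lying in $\bar N[C\cap\alpha]$, i.e. $\dot x'$ is $\bar N[C\cap\alpha]$-generic for $T\uhr\alpha$. (This is the standard fact that a cofinal branch through a Suslin tree at the height of a countable elementary submodel is generic over that model; here one uses that $\bar N[C\cap\alpha]$ is a countable model of $ZF^-$ containing $T\uhr\alpha$.) Then $\bar N[C\cap\alpha][\dot x']$ is exactly the generic extension of $\bar N[C\cap\alpha]$ by $T\uhr\alpha$ via $\dot x'$, and evaluating the name $\pi_N(\dot A)$ there gives $\dot A\uhr\alpha$. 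By the forcing theorem applied inside $\bar N[C\cap\alpha]$, the statement ``$\force_{T\uhr\alpha}\pi_N(\dot A)$ is Aronszajn'' yields that $\bar N[C\cap\alpha][\dot x']\models$``$\dot A\uhr\alpha$ is Aronszajn'', which is what we wanted.

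The one subtlety — and the step I expect to be the main obstacle — is making sure that ``Aronszajn'' is absolute enough between $\bar N[C\cap\alpha][\dot x']$ and the real universe for this to be useful later: being an $\aleph_1$-tree of height $\alpha$ with no cofinal branch is upward absolute to any outer model with the same $\alpha$, but having countable levels and no uncountable antichain is more delicate. Here, though, we only ever need the weak internal conclusion ``$\bar N[C\cap\alpha][\dot x']\models\dot A\uhr\alpha$ is Aronszajn'', so no genuine absoluteness is required; what does require a line of care is that the name $\pi_N(\dot A)$, which $\bar N$ sees as a name for an Aronszajn tree of \emph{full} height $\omg$ from $\bar N$'s point of view, evaluates along $\dot x'$ to precisely $\dot A\uhr\alpha$ and not something shorter — this follows from clause (1) of the preparatory setup, which pins down $\dot A\uhr\gamma$ on the club $F$, together with $\alpha=N\cap\omg\in F$ (as $\alpha\in D\subseteq F$ by the guessing claim) and the fact that $\dot x'$ meets the dense sets deciding each level of $\dot A$ below $\alpha$.
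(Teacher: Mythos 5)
Your proof is correct and follows essentially the same route as the paper's: transfer the hypothesis $V^{T\times R'}\models$ ``$\dot A$ is Aronszajn'' down through the elementary embedding $\pi_N^{-1}$, observe that $\dot x'$ is $\bar N[C\cap \alpha]$-generic for the Suslin tree $T\uhr \alpha$, and evaluate the name inside the collapsed model. (One cosmetic slip in your last paragraph: $\alpha\in F$ not because of the club $D$ from the guessing claim, but simply because $F\in N$ is a club and $\alpha=N\cap\omg$ is a limit of points of $F\cap N$.)
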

   
\begin{proof}
 Since $V[R']\models$ ``$T$ is Suslin'', we also have $\bar N\models$``$T\uhr \alpha$ is Suslin''. This is preserved by $\sigma$-closed forcing, so $\bar N[ C\cap \alpha]\models$ ``$ T\uhr \alpha$ is Suslin''. Now, $\dot x'$ is an $\bar N[C\cap \alpha]$-generic branch for the tree $T\uhr \alpha$. As $\dot A$ was  a $T$-name for a tree that is Aronszajn in $V^{R'\times T}$, we also have  $\bar N[ C\cap \alpha][\dot x']\models$ ``$ \dot A\uhr \alpha$ is Aronszajn'' by elementarity.
\end{proof}

After all this preparation, lets show that whenever $(x',f')\in \tilde T_\alpha$ then $(x',f')$ is above some element of $\dot X\uhr \alpha$. Recall that $f'$ was constructed using an $N_\alpha$-generic filter for the poset $\mc P_{x'}$. In turn, we will aim for a density argument, and it suffices to show that the following claim holds.
   
   \begin{mclm}\label{lm:density}Let  $\mc D$ be the set of all $(p,f)\in \mc P_{x'}$ so that $(x,f)\in \tilde T\uhr \alpha$ for some $x<x'$ and $(x,f)$ is above an element of $\dot X\uhr \alpha$. Then $\mc D\in N_\alpha$ and $\mc D$ is dense in $ \mc P_{x'}$.

   \end{mclm}

\begin{proof} First, note that  $\mc D\in N_\alpha$ follows from $\dot X\uhr \alpha \in N_\alpha$.
 
  Now, suppose that $\mc D$ is not dense, and we reach a contradiction. That is, we assume that  some $(p_0,f_0)\in \mc P_{x'}$ has no extension in $\mc D$. There is some $x_0<x'$ so that $(x_0,f_0)\in \tilde T\uhr \alpha$, and let $\gamma_\tau<\alpha$ so that $x_0\in T_{\gamma_\tau}$. 
 \medskip

 In this proof, we will say that $q$ is \emph{bad}, if $q\subs A(x')_\beta$ for some $\gamma_\tau\leq \beta<\alpha$ and

 \begin{enumerate}
  \item $|q|=|p_0|$,
  \item $(\cup \dom q) \uhr \gamma_\tau=(\cup \dom p_0)\uhr \gamma_\tau$ (which implies that $q$ is $x'$-compatible with $f_0$), and
  \item if $(x,f)\in \tilde T$ is of height $\leq \beta$ so that 
  \begin{enumerate}
   \item $(x_0,f_0)\leq (x,f)$ and $x<x'$,
   \item $q$ is $x'$-compatible with $f$, and
   \item $f(b\uhr \xi)=p_0(b)$ for any $b\in q$ and $\xi\in \eta_{\gamma_\alpha}\cap \htt(f)\setm \gamma_\tau$,\footnote{This notation is a bit unprecise: $p_0$ is not defined on $b\in q$. But $|q|=|p_0|$ so what we mean here is that if $b$ is the $i$th element of $q$ in some canonical enumeration then $f(b\uhr \xi)$ equals the value of $p_0$ on the $i$th element of $\dom p_0$.}
     \end{enumerate}then $(x,f)$ does not extend any element of $\dot X\uhr \alpha$. 
  
 \end{enumerate}

\begin{figure}[H]
  \centering
  \includegraphics[width=0.5\textwidth]{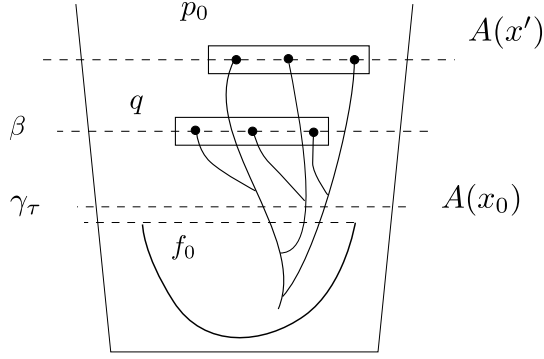}
  \label{diag:bad}
  \caption{The position of bad $q$'s}
  \end{figure}
 
 Our goal is first to show that there are a lot of bad $q$. Observe that any $q$ of the form $(\cup \dom p_0)\uhr \beta$ is bad where $\gamma_\tau\leq \beta<\alpha$. Indeed, if $q=(\cup \dom p_0)\uhr \beta$ is not bad then, since $q$ satisfies (1),(2), there must be some  $(x,f)\in \tilde T$ extending an element of $\dot X\uhr \alpha$ that has properties (3)(a)-(c) as well. This ensures that $(p_0,f_0)\leq (p_0,f)$ in $\mc P_{x'}$ (note how (3)(c) above implies that condition (4) from the definition of $\leq_{\mc P_{x'}}$ holds), and $(p_0,f)\in \mc D$ contradicting our initial assumption on  $(p_0,f_0)$.

 Moreover, to define the set of bad $q$, we used only parameters in $\bar M[\dot C\cap \alpha]$ (e.g. $x',p_0$). So $$\bar M[\dot C\cap \alpha]\models (\cup \dom p_0)\uhr \beta\textmd{ is bad for }\gamma_\tau\leq \beta<\alpha,$$ and hence there is a single $c\in \pi_N(\mc C)$ that forces this. In turn, for any  $\gamma_\tau\leq \beta<\alpha$, $$\bar M\models c\force_{\pi_N(\mc C)} (\cup \dom p_0)\uhr \beta \textmd{ is bad}.$$
 
 So, as $\bar N\prec \bar M$, this must hold in $\bar N[\dot x']\prec \bar M[\dot x']=\bar M$ as well:
 
 $$\bar N[\dot x']\models c\force_{\pi_N(\mc C)} (\cup \dom p_0)\uhr \beta \textmd{ is bad}.$$
 
 In turn, in $\bar N[\dot x']$, the tree $$S_0=\{q\in (\dot A\uhr \alpha)^{|p_0|}:c\force q\textmd{ is bad}\}$$ is uncountable.
 \medskip
 
 Let $S\subs S_0$ be the set of those $q\in S_0$ which have uncountably many extensions in $S_0$. Since ($\bar N[\dot x']$ thinks) $S$  is an uncountable subset of the Aronszajn tree $(\dot A\uhr \alpha)^{|p_0|}$, the next claim follows:
 
 \begin{clm}\label{clm:ext}
  There is a club $B\subs \alpha$ in $\bar N[\dot x']$ so that
  \begin{enumerate}
   \item\label{it:distr}   for any $\eta<\beta\in B$ and $q\in S_\eta$, the set $\{q'\in S_\beta:q\leq q'\}$ contains infinitely many pairwise disjoint elements,
     \item\label{it:guess} $\beta \in B$ implies that $\dot X\uhr \beta,S\uhr \beta\in N_\beta$, and
  \item\label{it:ext} for any $\beta\in B$ and $(x,f)\in \tilde T\uhr \beta$ extending $(x_0,f_0)$ with $x<x'$, there is some $(\bar z,\bar g)\in \tilde T\uhr \beta$ extending $(x,f)$ with $z<x'$ so that $(\bar z,\bar g)$ is above some element of $\dot X$.

  \end{enumerate}
 \end{clm}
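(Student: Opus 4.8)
The plan is to take $B$ as the intersection of a few clubs, all computed inside $\bar N[\dot x']$; so from now on ``club'' means club in $\alpha=\omg^{\bar N[\dot x']}$, ``uncountable'' means of internal size $\alpha$, and so on. Recall what is at hand there: the tree $S$, an uncountable subtree of the (internally Aronszajn) tree $(\dot A\uhr\alpha)^{|p_0|}$ all of whose nodes have uncountably many extensions in $S$; the tree $\tilde T\uhr\alpha$ together with its canonical construction; and the fact (established earlier) that $\dot X\uhr\alpha$ is a maximal antichain of $\tilde T\uhr\alpha$.

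Conditions (1) and (2) are routine. For (1): since the coordinates of any $q\in S$ are distinct, two extensions of $q$ at a common level are disjoint exactly when they differ in every coordinate, and the everywhere-uncountable-branching of $S$, together with the fact that $(\dot A\uhr\alpha)^{|p_0|}$ has no cofinal branch, yields on a club of levels $\beta$ arbitrarily large such families above every $q\in S_{<\beta}$; a standard closure argument over the $(<\alpha)$-many nodes of $S$ produces the club, which lies in $\bar N[\dot x']$. For (2): this is the $\diamondsuit^*$-guessing already carried out in Claim~\ref{clm:guess}, now for both $\dot X$ and $S$, each coded internally as a subset of $\alpha$; since $W^*$ is a $\diamondsuit^*$-sequence in the relevant generic extension with $W^*_\beta\subseteq N_\beta$, the sets $\{\beta:\dot X\uhr\beta\in N_\beta\}$ and $\{\beta:S\uhr\beta\in N_\beta\}$ each contain a club, and the pull-back by $\pi_N^{-1}$ used in Claim~\ref{clm:guess} puts these inside $\bar N[\dot x']$.

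Condition (3) is the substantive point. I would first intersect in one more club, namely the set of $\beta$ at which $\dot X\uhr\beta$ is a maximal antichain of $\tilde T\uhr\beta$; this is club because maximality of an antichain is $\Pi_1$, so it reflects along a continuous chain of small submodels from its truth at $\alpha$. Fix now $\beta\in B$. Since $\dot X\uhr\beta$ and $\tilde T\uhr\beta$ lie in $N_\beta$ (condition (2)) and $\Pi_1$ statements are downward absolute, $N_\beta$ too sees $\dot X\uhr\beta$ as a maximal antichain of $\tilde T\uhr\beta$; but $N_\beta\models$``$\tilde T\uhr\beta$ is Suslin'' (the standard $\diamondsuit^*$-argument, which $N_\beta$ can run using $W^*\uhr\gamma_\beta+1\in N_\beta$), so $N_\beta$ computes $\dot X\uhr\beta$ to be countable and hence bounded in $\tilde T\uhr\beta$, say below level $\delta_0<\beta$. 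Given $(x,f)\in\tilde T\uhr\beta$ extending $(x_0,f_0)$ with $x<x'$, use the extension clause of the projection $(x,f)\mapsto x$ witnessing that $\tilde T$ refines $T$ to pass from $(x,f)$ to some $(x'\uhr\gamma_{\delta_1},f_1)\in\tilde T_{\delta_1}$, where $\delta_1<\beta$ exceeds both $\delta_0$ and the level of $(x,f)$; then $(x'\uhr\gamma_{\delta_1},f_1)\in\tilde T\uhr\beta$ lies at a level above everything occupied by $\dot X\uhr\beta$, so maximality forces it strictly above some element of $\dot X\uhr\beta\subseteq\dot X$, and $(\bar z,\bar g)=(x'\uhr\gamma_{\delta_1},f_1)$ does the job.

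I expect the delicate point in (3) to be the invocation that $N_\beta\models$``$\tilde T\uhr\beta$ is Suslin'': this is legitimate only because the Suslinity of $\tilde T$ itself --- as opposed to the $R$-preservation being established in the present subsection --- is secured by the separate, standard argument that uses precisely the $\diamondsuit^*$-sequences $W^*$ sitting inside the models $N_\gamma$, so one must make sure this is invoked without circularity. Conditions (1) and (2) are routine; in (1) the only care needed is to read ``disjoint'' in the coordinatewise sense above rather than merely ``distinct''. Finally note that it is the splitting obtained in (1) and the guessing in (2), rather than (3) itself, that will be fed together with the hypothesis that $(p_0,f_0)$ has no extension in $\mc D$ into the eventual contradiction.
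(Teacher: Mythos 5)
Your treatment of (1) and (2) matches the paper's, which simply cites the standard Aronszajn-tree fact for (1) and runs the $\diamondsuit^*$-guessing of Claim \ref{clm:guess} again for $S$. The problem is in (3). You deduce from ``$N_\beta\models$ `$\tilde T\uhr \beta$ is Suslin'\,'' that $N_\beta$ sees $\dot X\uhr\beta$ as a countable, hence bounded, maximal antichain, i.e.\ $\dot X\uhr\beta\subs \tilde T_{<\delta_0}$ for some $\delta_0<\beta$. This step is both unjustified and cannot be right. Unjustified, because the construction of $\tilde T$ only \emph{seals} at level $\beta$ those maximal antichains of $\tilde T\uhr\beta$ that are guessed by $W^*_\beta$ (every node of $\tilde T_\beta$ ends up above an element of them); it does not make $\tilde T\uhr\beta$ internally Suslin in $N_\beta$ --- nothing prevents $N_\beta$ from containing an unbounded maximal antichain of the countable tree $\tilde T\uhr\beta$, and nothing guarantees $\omg^{N_\beta}=\beta$. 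And it cannot be right, because if $\dot X\uhr\alpha$ is uncountable in the collapsed model --- precisely the case the Main Claim has not yet excluded --- then $\dot X\uhr\beta$ meets levels cofinally in $\beta$ for club many $\beta<\alpha$, contradicting boundedness on your club $B$. Put differently: if your version of (3) held, $\dot X\uhr\alpha$ would be bounded below some level of $\tilde T\uhr\alpha$ and every node of $\tilde T_\alpha$ would trivially sit above an element of it, rendering the entire density argument of Main Claim \ref{lm:density} superfluous. Your own closing caveat about circularity is exactly where the argument breaks.

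The paper's route for (3) makes no appeal to Suslinity of $\tilde T\uhr\beta$. It first notes that the set of good $\beta$ is closed, so it suffices to show it is unbounded, and by elementarity it suffices to verify the statement at $\beta=\alpha$ itself. There the real difficulty is the side condition $\bar z<x'$: maximality of $\dot X\uhr\alpha$ in $\tilde T\uhr\alpha$ yields extensions of $(x,f)$ above $\dot X$, but not obviously ones whose first coordinate lies on the branch determined by $x'$. This is handled by the genericity of $\dot x'$ over $\bar N[C\cap\alpha]$ for the \emph{ground} Suslin tree $T\uhr\alpha$ (not $\tilde T$): if the statement failed, some $x\leq x^*<x'$ would force that no element of $Z(\dot x')$ is above $\dot X$; but the (EP) lifts $x^*$ to some $(x^*,f^*)\geq(x,f)$ in $\tilde T\uhr\alpha$, maximality of $\dot X\uhr\alpha$ extends this to $(x^{**},f^{**})$ above $\dot X$, and then $x^{**}$ forces the opposite of what $x^*$ forces. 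To repair your write-up, replace the boundedness step by this forcing argument.
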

 \begin{proof} It suffices to show that the set of those $\beta$ that satisfy either of the three assumptions is a club.
 
 The proof for (\ref{it:distr}) is detailed in \cite[Chapter VI, Lemma 7]{devlin2006souslin}, and is a relatively standard fact on Aronszajn trees (so we omit the proof here).
  
  Next, (\ref{it:guess}) was partly proved in Claim \ref{clm:guess}, and $S\uhr \beta\in N_\beta$ follows likewise: in $\bar N[\dot x']$, the sequence $\pi_N(W^*)=W^*\uhr \alpha$ is still a $\diamondsuit^*$ sequence, so there is  club many $ \beta$ so that $S\uhr \beta\in W_\beta\subs N_\beta$.

  Finally, (\ref{it:ext}) is the content of \cite[Chapter IX, Lemma 7]{devlin2006souslin} but let us give some argument here as well. First, note that it suffices to show that the set of those $\beta$ that satisfy (\ref{it:ext}) is unbounded (because any limit point $\beta$ of a sequence $\beta_n$ with (\ref{it:ext}) will also satisfy (\ref{it:ext})). By elementarity, it suffices that we prove $\alpha$ satisfies the assumptions of (\ref{it:ext}).
  
  Recall that $x'$ defines a $\bar N[C\cap \alpha]$-generic branch $\dot x'$ of the Suslin tree $T\uhr \alpha$. If $\alpha$ would fail (\ref{it:ext}) with some $(x,f)$, then we consider the set $$Z(\dot x')=\{(z,g)\in \tilde T\uhr \alpha:z<x', (x,f)\leq (z,g)\}\in\bar N[C\cap \alpha][\dot x'].$$ Now, there must be some $x\leq x^*<x'$ so that $\bar N[C\cap \alpha]\models x^*\force_T$  ``no element of $Z(\dot x')$ is above some element of $\dot X$''. By the (EP), there is some $f^*$ so that $(x,f)\leq (x^*,f^*)\in \tilde T\uhr \alpha$, and we can further extend $(x^*,f^*)$ to some $(x^{**},f^{**})\in \tilde T\uhr \alpha$ that is above some element of $\dot X\uhr \alpha$ (since $\dot X\uhr \alpha$ is a maximal antichain in $\tilde T\uhr \alpha$). But now $x^{**}\force_T$ ``$(x^{**},f^{**})\in Z(\dot x')$ and is above some element of $\dot X$'', a contradiction.
   
 \end{proof}

%
%
 
 Since $B\in \bar N[\dot x']$ is a club, when we pass to the extension $\bar N[\dot x'][C\cap \alpha]$ there is some end segment of $C\cap \alpha$ contained in $B$ i.e., we can pick some $\delta>\gamma_\tau=\htt (f_0)$ in $\alpha$ so that  $C\cap \alpha\setm \delta\subs B$.
 
 
 \medskip
 
 We are in the final stretch of the proof: find $\gamma_{\tau_1}<\gamma_{\tau_2}\in C\cap \alpha\setm \delta$ so that $\max (\gamma_{\tau_2}\cap \eta_{\gamma_\alpha})<\gamma_{\tau_1}$. This can be done by picking first a limit point $\gamma_{\tau_2}$ of $C\cap \alpha\setm \delta$ and then finding a large enough $\gamma_{\tau_1}<\gamma_{\tau_2}$ in $C\cap \alpha\setm \delta$.
 
 Fix any $q_0\in S_{\gamma_{\tau_1}}$, and let $\psi$ code the values of $p_0$ on  the finite set $\{b\uhr \xi:b\in q_0,\xi\in \eta_{\gamma_\alpha}\cap \gamma_{\tau_1}\setm \gamma_\tau\}$. Use the (EP) to find $(x,f)\in \tilde T_{{\tau_1}}$ (so $\htt (f)=\gamma_{\tau_1}$) such that
 \begin{enumerate}
  \item $(x_0,f_0)\leq (x,f)$ and $x<x'$, and
  \item  $q_0$ is $x'$-compatible with $f$ and $\psi\subs f$.\footnote{The role of $\psi$ is to ensure condition (3)(c) from the definition of bad $q$s.}
 \end{enumerate}

 
 Now, we can extend $(x,f)$ further  to $(\bar z,\bar g)\in \tilde T\uhr {\tau_2}$ above some element of $\dot X$ (since $\gamma_{\tau_2}\in B$ and Claim \ref{clm:ext}(\ref{it:ext})). By a trivial application of the (EP), we may assume that $(\bar z,\bar g)\in \tilde T_{\tau_2}$ i.e., $\htt (\bar g)=\gamma_{\tau_2}$ (to avoid introducing a new notation for $\htt (\bar g)$).

 \medskip

 
 The next final claim will yield the desired contradiction:
 
 \begin{clm}
  There is some $q\in S_{\gamma_{{\tau_2}}}$ such that $q$ is $x'$-compatible with $\bar g$.
 \end{clm}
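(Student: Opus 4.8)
\medskip

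\noindent\textbf{Proof plan.} Set $n=|p_0|$. The first step is to single out the unique $z^{*}\in\mc{T}_{\bar z}$ with $z^{*}<x'$. Since $x'\ge z^{*}$ forces $\dot h(\gamma_{\tau_2})=i_{z^{*}}$ while $x'\force_{T}\dot h(\gamma_{\tau_2})=h(x')(\gamma_{\tau_2})$, we get $i_{z^{*}}=h(x')(\gamma_{\tau_2})$; and since $z^{*}$ decides $\dot A_{\gamma_{\tau_2}}$, the map $b\mapsto\hat b$ sending $b\in B_{z^{*}}$ to its bound in $A(x')_{\gamma_{\tau_2}}$ is a bijection onto $A(x')_{\gamma_{\tau_2}}$ with $(\hat b)^{\downarrow}=b$. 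The plan is to produce $q$ as $(\widehat{b^{(i)}})_{i<n}$ for a tuple $\vec b=(b^{(i)})_{i<n}$ of branches lying in $\dom p^{H}$, where $H$ is the $N_{\tau_2}$-generic filter on $\mc{P}_{\bar z}$ out of which $\bar g$ was built (so $\dom\bar g=\bigcup\dom p^{H}$, and $\bar g(b\uhr\xi)=p^{H}(b)$ for almost all $\xi\in\eta_{\gamma_{\tau_2}}$ whenever $b\in\dom p^{H}$). If in addition each $b^{(i)}\in B_{z^{*}}$ with $p^{H}(b^{(i)})=i_{z^{*}}$ and $b^{(i)}\uhr\gamma_{\tau_1}=(q_0^{(i)})^{\downarrow}$, then $q$ is automatically $x'$-compatible with $\bar g$ and satisfies $q_0\le q$ (using $(\hat b)^{\downarrow}=b$ and injectivity of $\dot A$ at limit levels). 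So the whole task is to arrange $q\in S_{\gamma_{\tau_2}}$.

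For that I would first record a limit-closure property of $S$: if $t$ is a node of $(\dot A\uhr\alpha)^{n}$ at level $\gamma_{\tau_2}$ with $t\uhr\xi\in S$ for every $\xi<\gamma_{\tau_2}$, then $t\in S$. Indeed, each $t\uhr\xi$ has uncountably many extensions in $S_0$, only countably many of which lie at levels $\le\gamma_{\tau_2}$; so uncountably many lie at higher levels, hence — there being only countably many nodes at level $\gamma_{\tau_2}$ — above a single $u_\xi\ge t\uhr\xi$ at level $\gamma_{\tau_2}$. As $\cf(\gamma_{\tau_2})=\omega$, a pigeonhole over the countably many candidates yields $u^{*}$ with $u^{*}=u_\xi$ for cofinally many $\xi$, forcing $(u^{*})^{\downarrow}\supseteq t^{\downarrow}$, i.e.\ $u^{*}=t$; thus $t$ has uncountably many extensions in $S_0$, so $t\in S_0$ by downward closure of $S_0$, and therefore $t\in S$.

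The core is a density argument over $N_{\tau_2}$. By property (2) of $B$ (and the choice of $\gamma_{\tau_2}$) the initial segment $S\uhr\gamma_{\tau_2}$ lies in $N_{\tau_2}$, and so do $q_0$, $\mc{T}_{\bar z}$, and the sets $B_{z'}$, $i_{z'}$ together with the realization maps $b\mapsto\hat b$ for each $z'\in\mc{T}_{\bar z}$. Hence $N_{\tau_2}$ can form, for each $z'\in\mc{T}_{\bar z}$, the set $\mc{G}_{z'}$ of tuples $\vec g=(g^{(i)})_{i<n}$ from $B_{z'}$ with $g^{(i)}\uhr\gamma_{\tau_1}=(q_0^{(i)})^{\downarrow}$ and $(\widehat{g^{(i)}}\uhr\xi)_{i<n}\in S$ for all $\xi<\gamma_{\tau_2}$; and by property (1) of $B$ there is $q^{*}\in S_{\gamma_{\tau_2}}$ with $q_0\le q^{*}$, so that $((q^{*(i)})^{\downarrow})_{i<n}$ witnesses $\mc{G}_{z^{*}}\neq\emptyset$. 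Fix, inside $N_{\tau_2}$, some $\vec g_{z'}\in\mc{G}_{z'}$ whenever $\mc{G}_{z'}\neq\emptyset$. Then $\mc{E}_{z'}=\{(p,f)\in\mc{P}_{\bar z}:\{(g_{z'}^{(i)},i_{z'}):i<n\}\subseteq p\}$ lies in $N_{\tau_2}$ and is dense in $\mc{P}_{\bar z}$: given $(p,f)$, extend $f$ to a level function $\bar f$ of large enough limit height so that the $g_{z'}^{(i)}\uhr\htt(\bar f)$ are pairwise distinct and lie in $\dom\bar f$, respecting the colours $p$ prescribes on the old branches; since each $g_{z'}^{(i)}\in B_{z'}$, it may then be adjoined to $\dom p$ with value $i_{z'}$ while preserving $\bar z$-compatibility (exactly the situation handled when verifying that the conditions of $\mc{P}_{\bar z}$ satisfy (RP)), the branches being available because $\force_{T}\dot A$ is pruned. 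So $H$ meets every $\mc{E}_{z'}$, in particular $\mc{E}_{z^{*}}$; for $(p,f)\in H\cap\mc{E}_{z^{*}}$ we get $g_{z^{*}}^{(i)}\in\dom p\subseteq\dom p^{H}$ with $p^{H}(g_{z^{*}}^{(i)})=i_{z^{*}}$. Taking $\vec b=\vec g_{z^{*}}$, the tuple $q=(\widehat{g_{z^{*}}^{(i)}})_{i<n}$ is $x'$-compatible with $\bar g$, satisfies $q_0\le q$, and has $q\uhr\xi\in S$ for all $\xi<\gamma_{\tau_2}$; by the closure property, $q\in S_{\gamma_{\tau_2}}$, as required. (This $q$ is then what yields the contradiction, since $(\bar z,\bar g)$ lies above an element of $\dot X$ and witnesses (a)--(c) for $q$, so $q$ cannot be bad, while $q\in S\subseteq S_0$ says $c$ forces it bad.)

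The main obstacle is precisely this last coordination: being $x'$-compatible with $\bar g$ confines $q$'s branches to the thin, countable tree $\dom\bar g$ and to the single colour $i_{z^{*}}$, whereas $q\in S_{\gamma_{\tau_2}}$ is a statement about the uncountable tree $S$ at the critical level $\gamma_{\tau_2}$. The two are reconciled only because $\bar g$ was built generically over $N_{\tau_2}$ — which, via the $\diamondsuit^{*}$-sequence $W^{*}$ and property (2) of $B$, already contains the relevant part of $S$ below $\gamma_{\tau_2}$ — and because $\cf(\gamma_{\tau_2})=\omega$ makes the pigeonhole arguments go through. A subsidiary point to check is that adjoining a $B_{z^{*}}$-branch with value $i_{z^{*}}$ never clashes with the uniformization demands at the intermediate levels of $\bar f$; but this is the same compatibility already established in the limit step of the construction of $\tilde T$.
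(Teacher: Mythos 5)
Your overall strategy — produce $q$ as the tuple of upper bounds of finitely many branches that the generic filter behind $\bar g$ was forced to include with the right colour — is the right shape of argument, but the key density claim is false as stated, and this is a genuine gap rather than a presentational one. You fix the tuple $\vec g_{z'}$ of branches \emph{in advance} inside $N_{\tau_2}$ and then claim that $\mc E_{z'}=\{(p,f)\in \mc P_{\bar z}: \{(g^{(i)}_{z'},i_{z'}):i<n\}\subseteq p\}$ is dense. It is not: to adjoin $g^{(i)}_{z'}$ to the $p$-part of an extension $(\bar p,\bar f)$ of a given $(p,f)$ one needs $g^{(i)}_{z'}\uhr \htt(\bar f)\subseteq \dom \bar f$, hence in particular $g^{(i)}_{z'}\uhr \htt(f)\subseteq \dom f$ — and the construction freezes domains at old levels (this is exactly the clause $S=S'\cap A(x)$ in the successor-step claim; the (EP) threads new branches \emph{upward} from the current top, it never inserts nodes at levels already covered). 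A condition $(p,f)$ whose domain has already dropped the node $g^{(i)}_{z'}\uhr\htt(f)$ has no extension in $\mc E_{z'}$; worse, for a fixed branch not protected by $\dom p$, falling out of the domain is itself a dense (indeed generic) event. So the generic $\bar g$ will typically \emph{not} contain your pre-chosen branches, and the single-shot argument at the top level collapses. (A secondary worry: your ``limit-closure of $S$'' step needs nodes of $\dot A$ at limit levels to be determined by their sets of predecessors, which an arbitrary Aronszajn tree need not satisfy.)

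This is precisely why the paper's proof is an induction along $C\cap\alpha$ from $\gamma_{\tau_1}$ up to $\gamma_{\tau_2}$: it proves that \emph{every} extension $(z,g)$ of $(x,f)$ of height $\beta\in C\cap\alpha$ admits \emph{some} $q\in S_\beta$ compatible with $g$, re-choosing the witness at each level. The witness cannot be named in advance; instead, at each successor (resp.\ limit) step one uses Claim \ref{clm:ext}(\ref{it:distr}) — the current $q$ has \emph{infinitely many pairwise disjoint} extensions in the next level of $S$ — to see that the set of conditions whose $p$-part captures \emph{at least one} such extension with the correct colour is dense and lies in $N_{\beta^+}$ (resp.\ $N_\beta$): no matter which branches the filter has already committed to or excluded, one of the pairwise disjoint candidates can still be threaded through via the (EP). Genericity then hands you a (level-dependent) $q'$, and the disjointness hypothesis, not a fixed choice of branches, is what reconciles the thin generic domain with the uncountable tree $S$. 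To repair your write-up you would need to replace the fixed $\vec g_{z'}$ by this level-by-level re-selection, at which point you have essentially reproduced the paper's induction.
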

\begin{proof}

 We prove by induction on $\beta\geq \gamma_{\tau_1}$ (where $\beta\in C\cap \alpha$), that for any extension $(z,g)\in \tilde T$ of $(x,f)$ with $\htt (g)=\beta$, some $q\in S_\beta$ is $x'$-compatible with $g$.
 
 First, if $\beta=\gamma_{\tau_1}$ then $(z,g)=(x,f)$ and  $q=q_0$ works. In general, and very informally, what happens is that all the extensions $(z,g)\in \tilde T$ of $(x,f)$ are defined by taking generic filters for models that contain the levels of $S$, while these levels of $S$ contain infinitely many pairwise disjoint $q$. In turn, some $q$ must be compatible with the generic map $g$; the details follow below.
 
 In the successor step, we are given $(z',g')\in \tilde T$ where $\beta<\beta^+=\htt (g')$ are successive elements of $C\cap \alpha$. Take $(z,g)\in \tilde T$ below $(z',g')$ of height $\beta$, and, using the inductive assumption, find $q\in S_{\beta}$ that is $x'$-compatible with $g$. Recall how $g'$ was constructed: (1) we took an $N_{\beta^+}$-generic filter $H$ for the poset $\mc P_{z',g}$ which gave a function $p^H$ defined on some branches from $B(z')$, and then (2) we found $g'$ defined on $\cup (\dom p^H)$ in such a way that $g'(b\uhr \xi)=p^H(b)$ for all $b\in \dom p^H$ and almost all $\xi\in \eta_{\beta^+}$. 
 
 So, it suffices to show that there is some $q'\in S_{\beta^+}$ so that $q'\subs\dom p^H$ and $p^H\uhr q'$ is constant $h(x')(\beta)$. Such a $q'$ will be $x'$-compatible with $g'$. However, the fact that there are infinitely many pairwise disjoint extensions $q'\in S_{\beta^+}$ of $q$ implies that  the set $$\mc E=\{p\in \mc P_{z',g}:(\exists q'\in S_{\beta^+})\;q\leq q'\subseteq \dom p,\; p\uhr q'\equiv h(x')(\beta^+)\}$$ is dense in $\mc P_{z',g}$ and contained in $N_{\beta^+}$. In turn, $H\cap \mc E\neq \emptyset$ and we are done.
 
 
Finally, suppose that  $(z',g')\in \tilde T$ with $\htt (g')=\beta$ for some limit $\beta$. Again, recall how $g'$ was constructed using an $N_{\beta}$-generic filter for the poset $\mc P_{z'}$, in which we worked with conditions of the form $(p,g)$ where $p$ is a finite function from branches in $B(z')$, $(z,g)\in \tilde T$ for some $z<z'$, and  $b\uhr \htt (g)\subs \dom g$ for any $b\in \dom p$.

In order to show that the generic function $g'$ satisfies our property, we need that the set $$\{(p,g)\in \mc P_{z'}:(\exists q'\in S_{\beta})\;q'\subseteq \dom p,\; p\uhr q'\equiv h(x')(\beta^+)\}$$ is dense in $\mc P_{z'}$. So fix an arbitrary  $(p_1,g_1)\in \mc P_{z'}$ of height $\beta_1$ (without loss of generality, $(z_1,g_1)\geq (x,f)$ with $z_1<z'$). First, by the inductive hypothesis, we can find $q_1\in S_{{\beta_1}}$ that is $x'$-compatible with $g_1$. Then, we can go one level higher to height $\beta_1^+=\min C\setm (\beta_1+1)$, and find $q_2\in S_{{\beta_1^+}}$ above $q_1$ that is disjoint from  $(\cup \dom p_1)\uhr {\beta_1^+}$.

 \begin{figure}[H] 
 \centering

 \includegraphics[width=0.5\textwidth]{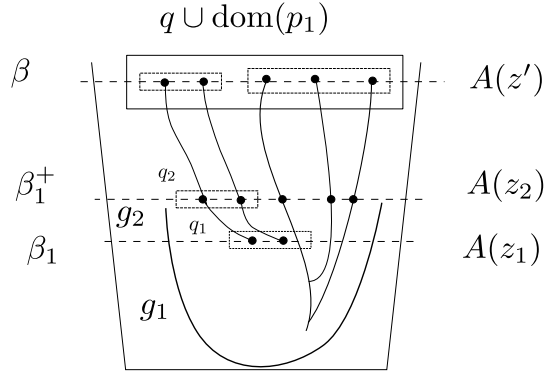}
    \caption{Finding $q$ compatible with $g$}
  \label{diag:final}
   \end{figure}

We can use the (EP) to find $(z_2,g_2)\in \tilde T$ with $\htt( g_2)=\beta_1^+$ extending $(z_1,g_1)$ so that $z_2<z$ and 
\begin{enumerate}[(i)]
 \item $q_2$ is $x'$-compatible with $g_2$, and
 \item $g_2(b\uhr \xi)=p_1(b)$ for all $b\in \dom p_1$ and $\xi\in \eta_{\beta}\cap {\beta_1^+}\setm {\beta_1}$.\footnote{To understand this requirement, just see how the extension in $\mc P_{z'}$ is defined.}
\end{enumerate}
Now pick any $q\in S_{\beta}$ that extends $q_2$, and define $p_2:(\dom p_1)\cup q\to 2$ to be an arbitrary extension of $p_1$. Note that $(p_2,g_2)\in \mc P_{z'}$ is an extension of $(p_1,g_1)$ so that $q$ is $x'$-compatible with $g_2$; see Figure \ref{diag:final} for a summary.
  \end{proof}

 The latter claim clearly contradicts that $q$ was bad since $(\bar z,\bar g)$ satisfies the conditions (3)(a)-(c) from the definition of why $q$ was bad while $(\bar z,\bar g)$ extends some element of $\dot X$.  This, in turn, finishes the proof of the Main Claim \ref{lm:density}.
 
\end{proof}

 We ended the proof now that shows that $\dot X\uhr \alpha$ is a maximal antichain in $\tilde T$, and so $\dot X=\dot X\uhr \alpha$ is countable. In turn, $$V[C][\dot R']\models \tilde T\textmd{ is Suslin}$$
  and so $\tilde T$ must preserve each derived tree $R'\in \partial R$ Suslin. In other words, $R$ remains full Suslin after forcing with $\tilde T$.
  
  \medskip
  
  This finishes the proof of the Main Theorem.

\section{Closing remarks}

Once the reader is familiar with the above proof, it should take no time to realize that one can actually prove the following:

\begin{thm}
 Consistently, CH holds, there exists a full Suslin tree $R$, and for any Aronszajn tree $A$, either
 \begin{enumerate}
  \item a derived tree of $R$ club-embeds into $A$, or
  \item for any ladder system $\underline \eta$, any $\omega$-colouring of $\underline \eta$ has an $A$-uniformization.
 \end{enumerate}

\end{thm}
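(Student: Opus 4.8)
The plan is to re-run the proof of Corollary~\ref{cor:main} with two upgrades: first, an $\omega$-colouring version of Main Theorem~\ref{thm:main}; and second, a bookkeeping that catches, along the iteration, names for arbitrary triples $(\dot\eta,\dot h,\dot A)$ where $\dot\eta$ names a ladder system, $\dot h$ an $\omega$-colouring of it, and $\dot A$ an Aronszajn tree.

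For the first upgrade I would prove: if $V\models\diamondsuit^*$, $T$ is a Suslin tree, $V^T\models R$ is full Suslin, and $\dot\eta,\dot h,\dot A$ are $T$-names such that $V^T$ thinks $\dot\eta$ is a ladder system, $\dot h$ an $\omega$-colouring of $\dot\eta$, and $V^{T\times R'}\models\dot A$ is Aronszajn for every $R'\in\partial R$, then in $V[C]$ there is a refinement $\tilde T$ of $T$ with $V[C]^{\tilde T}\models$ ``$R$ is full Suslin and $\dot h$ on $\dot\eta$ has an $\dot A$-uniformization''. The proof is the argument of Section~\ref{sec:proof} read with $2$ replaced by $\omega$ throughout. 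In the preparations one additionally lets a level-$\gamma$ node $x$ (for $\gamma$ in a club $F$) decide $\dot\eta\uhr\gamma=\eta(x)$ alongside $A(x),h(x)$; since $T$ adds no reals this too is decided on a club, so $\tilde T$ --- whose $\alpha$-th level consists of pairs $(x,f)$ with $f\colon S\to\omega$ a uniformization of $h(x)$ on $\eta(x)$ --- remains a genuine, generic-independent poset. The constant value $i_z$ that a node $z\in\mc T_x$ forces for $\dot h(\gamma)$ is replaced by the colour function that $z$ forces onto the $\gamma$-th member of $\dot h$ (there are still only countably many possibilities, one per $z\in\mc T_x$), and the finite side conditions $p\in\Fn(B(x'),2)$ become elements of $\Fn(B(x'),\omega)$. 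The only point where the values of the colouring enter substantively is Lemma~\ref{lm:ctblunif}, and that lemma holds verbatim for $\omega$-valued colourings (its single-ordinal case is the classical fact that any colouring of a ladder system can be uniformized on a countable ordinal, and the density/forcing step in its proof never inspects the colours). The (RP), the (EP), the analysis of the generic filters $H$, and the whole ``why is $R$ still full Suslin'' argument --- the elementary submodels, the $\diamondsuit^*$-guessing, the notion of \emph{bad} $q$ in Claims~\ref{clm:guess}--\ref{lm:density} --- are purely structural and carry over unchanged.

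For the second upgrade I would feed this $R$-preserving s-operator into Theorem~\ref{thm:ASh} exactly as in Corollary~\ref{cor:main}, but with the bookkeeping at stage $\tau$ handing us a $T^\tau$-name for a triple $(\dot\eta,\dot h,\dot A)$ as above. A standard bookkeeping on $\oo_2$ (as available in $L$ and preserved, like $\diamondsuit$ and $\square$, by the preparatory iteration) ensures that every such triple occurring in the final extension $V^T$ is caught at some stage: this is legitimate since $T$ adds no reals and, by the iteration lemma used in Corollary~\ref{cor:main}, every set of size $\le\aleph_1$ of $V^T$ --- in particular every ladder system, $\omega$-colouring, and Aronszajn tree --- already lies in $V^{T^{\tau_0}}$ for some $\tau_0<\oo_2$. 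At the stage $\tau$ where a triple is caught, if $V_\alpha^{T^\tau\times R'}\models\dot A$ is Aronszajn for all $R'\in\partial R$ we apply the $\omega$-colouring Main Theorem to obtain a refinement uniformizing $\dot h$ on $\dot\eta$ over $\dot A$, and otherwise we let $\sigma$ be the identity. To read off the conclusion: given in $V^T$ an Aronszajn tree $A$ such that no derived tree of $R$ club-embeds into $A$, Fact~\ref{lm:embed} (applied where $A$ first appears, and using that a club-embedding, once present, persists to $V^T$) forces us into its alternative~(1), so $A$ stays Aronszajn over every $R'\in\partial R$; hence for every ladder system $\underline\eta$ and every $\omega$-colouring $h$ of it in $V^T$ the triple $(\underline\eta,h,A)$ was caught at some stage, at which an $A$-uniformization of $h$ was added. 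That $T$ is ccc, adds no reals (so CH holds), and keeps $R$ full Suslin is verified exactly as in Corollary~\ref{cor:main}.

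The main obstacle is the first upgrade: one must be certain that the delicate Suslinity-preservation argument of Section~\ref{sec:proof} --- especially the submodel and $\diamondsuit^*$ machinery around Claims~\ref{clm:guess}--\ref{lm:density} --- never secretly uses that the range is $2$ rather than $\omega$. I expect it does not, since the colouring there only ever enters through finite side conditions and through Lemma~\ref{lm:ctblunif}; but checking this line by line, together with verifying that promoting $\dot\eta$ from a ground-model parameter to a name introduces no circularity in the definition of $\tilde T$, is the step that needs genuine care.
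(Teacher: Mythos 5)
Your proposal follows exactly the route the paper itself indicates for this theorem --- the paper gives no proof beyond the remark that one replaces constant $2$-colourings on a fixed ladder system by $\omega$-colourings of varying ladder systems and re-runs Section \ref{sec:proof} and Corollary \ref{cor:main}, so your sketch is an elaboration of the intended argument, and your treatment of the names $\dot\eta$, the bookkeeping of triples, and the observation that the Suslinity-preservation machinery is colour-blind are all sound. One correction to your ``replace $2$ by $\omega$'' dictionary: the side conditions $p$ in the (EP) and in the posets $\mc P_{x',f}$ and $\mc P_{x'}$ must assign to each branch $b$ not a single colour in $\omega$ but one of the countably many candidate colour functions on the top-level ladder $\eta_{\gamma_\beta}$ (the functions forced by the various $z\in\mc T_{x}$), since for a non-constant colouring the promise $f'(b\uhr \xi)=p(b)$ for almost all $\xi$ must become $f'(b\uhr \xi)=p(b)(\xi)$ for almost all $\xi$; with that adjustment the rest of your argument goes through as you describe.
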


The point being that so far we aimed to uniformize only constant 2-colourings on a fixed ladder system, and now we are allowed to use $\omega$ colours and vary the ladder system. We decided to prove the special case only (which was enough to yield our corollary on minimal linear orders and Suslin trees) to simplify notation and since we had no further application for this stronger result in mind.

We can also mix-in the Abraham-Shelah forcings from \cite{abrahamiso} with our iteration to achieve that there is a single special Aronszajn tree $U$ so that any Aronszajn tree $A$  that fails (1), actually embeds into $U$ on a club. In this case, any Suslin tree in the resulting model is a countable union of derived trees of $R$ when restricted to an appropriate club.

\medskip

Second, given all the developments in countable support iteration and preservation theorems since the 1980s, we should address why we chose Jensen's iteration framework to prove our result. Especially so that Tadatoshi Miyamoto \cite{spres} proved that a countable support iteration of proper forcings will preserve a Suslin tree $R$ given that each successor step preserves $R$  (also see \cite{heikepres}). So what does prevent us from repeating Moore's countable support iteration to force the uniformizations for certain Aronszajn trees while preserving some fixed tree $R$ being Suslin?\footnote{It would suffice to prove that a single step from the iteration keeps $R$ full Suslin by the Miyamoto preservation theorem. The sufficient conditions detailed in \cite{heikepres} for example (which apply to e.g. the Sacks forcing, or certain weak uniformization forcings) does not apply to our case unfortunately.}

We are unsure at this point if this can be done, but Jensen's framework certainly has a great advantage over everyday CS-iterations, even ones adding no new reals: being ccc, we can guess a countable object in the extension (e.g., the set of bounded branches in $\dot A\uhr \alpha$) by countably many ground model sets (in our case, the sets $B(x)$ collected these branches), and then find a single function that is a uniformization no matter which one of the guesses turns out to be true in the extension. Although, this is not the case for a general CSI of proper forcings, the proof we presented certainly resembles some completeness arguments ubiquitous in no-new-real iterations,\footnote{In particular, see the comments on p. 2686 and 2688 on ``countably many guesses'' \cite{eisworth1999ch}.} and the fusion argument in Moore's work \cite{justinmin}. Despite the similarities, we have not succeeded so far in working out our result using a countable support iteration.

\begin{quest}
 Given a full Suslin tree $R$ and Aronszajn-tree $A$ so that no derived tree of $R$ club embeds into $A$, is there a proper forcing $\mc Q$ which preserves $R$ full Suslin and introduces an $A$-uniformization to a given ladder system colouring?
\end{quest}

It could very well be that Moore's original forcing from \cite{justinmin} does the trick.

\medskip

At this point, we are unsure whether a model like Moore's, where all colourings are uniformized on all Aronszajn trees, can contain any Suslin trees (say if one starts forcing from $L$).

\begin{quest}Is it consistent that there is a Suslin-tree $T$ and ladder system $\underline \eta$ so that any 2-colouring of $\underline \eta$ has a $T$-uniformization?
\end{quest}

\medskip

Moreover, a question of Baumgartner still remains open from \cite{baumorder}:

\begin{quest}

Does $\diamondsuit$ imply the existence of a minimal Aronszajn order?\footnote{Keep in mind that $\diamondsuit^+$ does imply the existence of a minimal Aronszajn order \cite{baumorder,stronglysurj}.}
 
\end{quest}

A possibly relevant result is that $\diamondsuit$ is consistent with the uniformization property restricted to a stationary, costationary $S\subs \omg$ (see \cite[Theorem 1.1, 2.4]{whitehead1}). Let us also refer the reader to Moore's survey \cite{moore2005structural} for a great overview on Aronszajn trees.

\medskip

Finally, there is a quite reasonable but rather different approach to find a model with a Suslin tree but no minimal uncountable linear orders other than $\pm \omg$: start with Moore's model and add a single Cohen real (this was recommended by Moore and Sy Friedman, independently, in personal communication). CH will still  hold, and we do have a Suslin tree in the resulting model \cite{walks}, but it is not hard to see that  the uniformization property for trees will fail. At this point, we could not verify that the extension has no minimal Aronszajn types. 

Yet another angle would be to look at the Sacks model which does have Suslin trees but no minimal real order types; this can be deduced from certain parametrized diamonds \cite{pardiamond}. The same weak diamonds, however, imply that the uniformization property on trees will fail. In any case, if there are no minimal Aronszajn types here, then this model would also provide the first example showing that CH is not necessary to achieve Moore's result.

\end{document}